\documentclass[11pt]{amsart}
\newcommand{\K}{\mathcal K}
\newcommand{\C}{\mathcal C}
\newcommand{\D}{\mathcal D}
\newcommand{\cls}[3]{\overline{#1}^{\scriptscriptstyle {L^{#2}(#3)}}}

\usepackage{amssymb}
\usepackage{enumerate}
\usepackage{amsfonts}
\usepackage{amsmath}
\usepackage{mathrsfs}
\allowdisplaybreaks[1]

\newcounter{fig}

\newcommand{\mybic}{\author{Gianluca Cassese}
                     \address{Universit\`{a} Milano Bicocca}
                     \email{gianluca.cassese@unimib.it}
                     \curraddr{Department of Statistics, Building U7, Room 2097, via Bicocca 
                               degli Arcimboldi 8, 20126 Milano - Italy}}

\newtheorem{theorem}{Theorem}
\theoremstyle{plain}

\newtheorem{corollary}{Corollary}

\newtheorem{example}{Example}

\newtheorem{lemma}{Lemma}

\newtheorem{remark}{Remark}

\setlength{\textwidth}{6.5in}
\setlength{\oddsidemargin}{0in}
\setlength{\evensidemargin}{0in}
\setlength{\textheight}{9in}
\setlength{\topmargin}{0.0in}
\setlength{\headheight}{0.12in}


\newcommand{\Prob}{\mathbb{P}}

\newcommand{\Sim}{\mathscr{S}} 
\DeclareMathOperator*{\LIM}{LIM} 
\DeclareMathOperator*{\co}{co}

\newcommand{\B}{\mathfrak{B}} 
\newcommand{\A}{\mathscr{A}} 

\newcommand{\R}{\mathbb{R}} 
\newcommand{\N}{\mathbb{N}}

\newcommand{\abs}[1]{\vert #1\vert} 
\newcommand{\dabs}[1]{\left\vert #1\right\vert}

\newcommand{\net}[3]{\left\langle #1_{#2}\right\rangle_{#2\in #3} } 
\newcommand{\neta}[1]{\net{#1}{\alpha}{\mathfrak A}} 
\newcommand{\nnet}[3]{\left\langle #1\right\rangle_{#2\in #3} } 
 
\newcommand{\seq}[2]{\net{#1}{#2}{\mathbb{N}}} 
\newcommand{\sseq}[2]{\nnet{#1}{#2}{\mathbb{N}}} 
\newcommand{\seqn}[1]{\seq{#1}{n}} 
\newcommand{\sseqn}[1]{\sseq{#1}{n}} 
\newcommand{\seqnk}[1]{\sseq{#1_{n_k}}{k}} 

\newcommand{\norm}[1]{\Vert #1\Vert}

\newcommand{\set}[1]{\mathbf{1}_{#1}}

\newcommand{\emp}{\varnothing}
\newcommand{\0}{\emptyset}

\begin{document}

\title[convergence in measure]{Convergence in Measure under Finite Additivity}
\mybic
\date
\today
\subjclass[2010]{Primary 28A20, Secondary 46E30.} 

\keywords{Riesz representation, $L^0$ bounded sets, convergence in measure,
Koml\'{o}s Lemma.}

\begin{abstract}
We investigate the possibility of replacing the topology of convergence in 
probability with convergence in $L^1$, upon a change of the underlying 
measure under finite additivity. We establish conditions for the continuity of 
linear operators and convergence of measurable sequences, including a finitely 
additive analogue of Koml\'{o}s Lemma. We also prove several topological
implications. Eventually, a characterization of continuous linear 
functionals on the space of measurable functions is obtained.
\end{abstract}

\maketitle

\section{Introduction and Notation}

This paper investigates some properties of the space $L^0(\lambda)$ of  
$\lambda$-measurable, real valued functions on some set $\Omega$, where
$\lambda$ is a bounded, finitely additive set function defined on an algebra
$\A$ of subsets of $\Omega$, i.e. $\lambda\in ba(\A)$. We first characterize
in section \ref{sec riesz} the dual space of $L^0(\lambda)$ and study some of 
its properties, particularly positivity. In the following section \ref{sec bdd} we 
investigate several boundedness conditions and, in sections \ref{sec topology} and
\ref{sec operators} we develop some topological implications including conditions
for continuity of linear operators.  Eventually, in section \ref{sec convergence} we 
study convergence properties of sequences. Section \ref{sec riesz} is quite 
independent from the following ones.

Although being an entirely standard and widely used concept in probability and 
mathematical statistics, convergence in measure is much less popular in analysis, 
even assuming countable additivity. It is known that the corresponding topology is 
completely metrizable but, in general, not separable; moreover, it is not 
linear so that some useful tools such as separation theorems are not available. 
Actually, even a characterization of continuous linear functionals is missing. Finite 
additivity introduces additional complications inducing, e.g., incompleteness.

The main idea of this paper is to show that some of the techniques developed 
in the classical setting are still available under finite additivity, by a change of the
given measure. In particular we show that, upon replacing the original measure $\lambda$
with another suitably chosen but \textit{near} to it, $\mu$, the topology of convergence 
in $\lambda$-measure may be replaced by the $L^1(\mu)$ topology. Our analysis
focuses on bounded, convex sets of measurable functions. Convexity is a crucial
property for our technique but is delicate as the topology of convergence in measure
is not locally convex. We prove in Theorem \ref{th co} that $L^0(\lambda)$ is
a locally convex topological vector space if and only if $\lambda$ is strongly
discontinuous, a property defined in Lemma \ref{lemma cts}. The main result,
Theorem \ref{th bdd}, shows that bounded, convex subsets of $L^0(\lambda)$
which admit a lower bound are actually bounded in $L^1(\mu)$. We draw
from this conclusion a number of implications. In Theorem \ref{th sep} we
obtain a set for which $L^0$ and $L^1$ closures coincide while in 
Corollaries \ref{cor TVS} and \ref{cor lattice} conditions under which continuous, 
$L^0(\lambda)$ valued operators are continuous as maps on $L^1(\mu)$. Likewise, 
Theorem \ref{th memin} proves, that a $\lambda$-convergent sequence admits
a subsequence converging in $L^1(\mu)$. We also obtain in Theorem 
\ref{th komlos} a partial, finitely additive analogue of the celebrated lemma of Koml\'os.
We make use of some results developed in a related paper, \cite{lebesgue}.
Some of the results obtained here have a countably additive counterpart and, as always,
a possible approach would then be to pass through the Stone space representation (see
e.g. \cite{fefferman}). We find, however, that even when this possibility is available, 
working directly under finite additivity is preferable as it gives explicit constructions 
rhater than isomorphic ones.

In the notation as well as in the terminology on finitely additive measures and integrals
we mainly follow Dunford and Schwarz \cite{bible}. We prefer, though, the symbol
$\abs\mu$ of \cite{rao} to denote the total variation measure associated with 
$\mu\in ba(\A)$. The space $ba(\A)$ is endowed with the usual lattice structure
described, e.g., in \cite{rao} and we thus use the lattice symbols $\mu^+=\mu\vee0$ 
and $\mu^-=-(\mu\wedge0)$ and the fact that $\abs\mu=\mu^++\mu^-$. The 
integral of $f\in L^1(\mu)$ will be denoted at will as $\int fd\mu$ or $\mu(f)$ but 
always as $\mu_f$ when considered itself as a set function.

We consider some special subfamilies of $ba(\A)$, in particular the family
$ba_0(\A)$ of set functions on $\A$ with finite range and
$ba(\lambda)=\{\mu\in ba(\A):\mu\ll\lambda\}$. Moreover, we denote by 
(\textit{i}) $ba_0(\lambda)$, (\textit{ii}) $ba_\infty(\lambda)$ and (\textit{iii}) 
$ba_*(\lambda)$ the classes of those set functions $\mu\in ba(\lambda)$ such 
that (\textit{i}) $\mu$ has finite range, (\textit{ii}) $\abs\mu\le c\abs\lambda$ 
for some $c\in\R_+$ and (\textit{iii}) $\mu\in ba_\infty(\lambda)$ and $\abs\mu(A)=0$ 
if and only if $\abs\lambda(A)=0$, respectively. In the above defined families the 
symbol $ba$ will be replaced by $\Prob$ to indicate the intersection of the corresponding 
family with the set $\Prob(\A)$ of finitely additive probability measures on $\A$.

The linear space of $\A$-simple functions, generated by the indicators of sets in 
$\A$, will be indicated by $\Sim(\A)$ and, when considered as a normed space, 
will always be endowed with the supremum norm. A sequence $\seqn f$ in 
$L^0(\lambda)$ $\lambda$-converges to $f\in\R^\Omega$ if 
$\lim_n\abs\lambda^*(\abs{f_n-f}>c)=0$ for any $c>0$, in which case 
$f\in L^0(\lambda)$ too. In fact $f\in L^0(\lambda)$ if and only if there exists a 
sequence $\seqn f$ in $\Sim(\A)$ which $\lambda$-converges to $f$.
As in \cite[II.1.11]{bible}, a (not necessarily measurable) function $f:\Omega\to\R$ 
is said to be $\lambda$-null if $\abs\lambda^*(\abs f>c)=0$ for all $c>0$ and
a subset of $\Omega$ is $\lambda$-null when its indicator function is null. A
function $f:\Omega\to\R$ possesses some property $\lambda$-a.s. -- e.g.
$f\ge0$ $\lambda$-a.s. -- if there exists a $\lambda$-null function $g$ such
that $f+g$ possesses that property. Given its use in the sequel, we say that
$f$ is a $\lambda$-a.s. lower bound of a set $\K$ if for each $c>0$ and
$k\in\K$ we have $\abs\lambda^*(k<f-c)=0$.

The set $L^0(\lambda)$ of measurable functions is endowed with the metric
\begin{equation}
\label{metric}
d(f,g)=\inf\left\{c+\abs\lambda^*(\abs{f-g}\ge c):c>0\right\} 
\end{equation}
(or equivalently with $\rho(f,g)=\int(\abs{f-g}\wedge1)d\abs\lambda$). By a 
bounded subset $\K$ of $L^0(\lambda)$ we will always mean a subset which, upon
delation, is contained in any ball around the origin. This definition turns out to be
equivalent to the condition
\begin{equation}
\lim_{c\to\infty}\sup_{f\in\K}\abs\lambda^*(\abs f>c)=0
\end{equation}
Of course, if $\K_1$ and $\K_2$ are convex, bounded subsets of $L^0(\lambda)$ 
then from
\begin{align*}
\abs\lambda^*(\abs{af+(1-a)g}>c)
&\le
\abs\lambda^*(\abs f>c)+\abs\lambda^*(\abs g>c)
\end{align*}
we deduce that $\co(\K_1\cup\K_2)$ is itself bounded.
Any space $X$ of measurable functions mentioned in this paper, including 
$L^0(\lambda)$ and $\Sim(\A)$, will be endowed with pointwise 
ordering in terms of which $f\ge g$ is synonymous to $f(\omega)\ge g(\omega)$ 
for all $\omega\in\Omega$. The symbol $X_+$ should be interpreted
accordingly. The closure of a set $H$ in $L^p(\lambda)$ will be denoted as
$\cls Hp\lambda$.

We will use repeatedly the following, finitely additive version of Tchebycheff inequality
where $f\in L^1(\lambda)_+$:
\begin{align}
\label{tcheby}
\abs\lambda^*(f>c)&=\sup m(f>c)\le c^{-1}\sup m(f)\le c^{-1} \abs\lambda(f)
\end{align}
where the supremum is computed over all $m\in ba(\sigma\A)_+$ which are 
extensions of $\abs\lambda$, see \cite[3.3.3]{rao}.

Eventually, if $X$ and $Y$ are vector lattices, a linear map $T:X\to Y$ defines an 
order bounded operator if for all sets of the form, $U=\{x\in X:x_1\le x\le x_2\}$ 
there exists $y\in Y$ such that $\abs{Tx}\le\abs y$ for all $x\in U$. If $A$ is any
set, we denote by $\co(A)$ its convex hull.

\section{Linear Functionals on $L^0(\lambda)$}
\label{sec riesz}

$\lambda\in ba_0(\A)$ if and only if it may be written as a finite sum 
$\sum_{n=1}^N\alpha_n\lambda_n$ where, $\lambda_1,\ldots,\lambda_N\in ba(\A)$ 
take their values in the set $\{0,1\}$, \cite[Lemma 11.1.3]{rao}.  Other
useful properties are proved in the next

\begin{lemma}
\label{lemma ba0}
The following properties are equivalent: (i) $\lambda\in ba_0(\A)$, 
(ii) $\abs\lambda\in ba_0(\A)$, (iii) there exists $\eta>0$ such that
$A\in\A$ and $\abs\lambda(A)>0$ imply $\abs\lambda(A)>\eta$,
(iv) there exists $c>0$ such that $A\in\A$ and $\abs{\lambda(A)}>0$ 
imply $\abs{\lambda(A)}>c$.
\end{lemma}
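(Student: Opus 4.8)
The plan is to establish the cycle of implications (i) $\Rightarrow$ (ii) $\Rightarrow$ (iii) $\Rightarrow$ (iv) $\Rightarrow$ (i), exploiting at each stage the structural characterization of $ba_0(\A)$ as the set of finite linear combinations of $\{0,1\}$-valued set functions, together with elementary lattice manipulations in $ba(\A)$.

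For (i) $\Rightarrow$ (ii), I would write $\lambda=\sum_{n=1}^N\alpha_n\lambda_n$ with each $\lambda_n$ taking values in $\{0,1\}$, and observe that the total variation $\abs\lambda$ is then dominated by $\sum_n\abs{\alpha_n}\lambda_n$, which is a nonnegative element of $ba_0(\A)$ (a finite sum of $\{0,1\}$-valued measures scaled by constants again has finite range, being a bounded function on the finite Boolean algebra these measures generate). Since $0\le\abs\lambda\le\sum_n\abs{\alpha_n}\lambda_n$ and the dominating function has finite range, one checks that $\abs\lambda$ itself must have finite range as well; alternatively, and perhaps more cleanly, one notes that $\abs\lambda$ is $\A$-measurable with respect to the finite subalgebra generated by $\lambda_1,\dots,\lambda_N$ (on each atom of that algebra the partial sums are constant, hence so is the variation), which forces finite range directly.

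The implication (ii) $\Rightarrow$ (iii) is the easy direction: if $\abs\lambda$ has finite range, its range is a finite subset of $[0,\abs\lambda(\Omega)]$, so the smallest strictly positive value $\eta$ in that range does the job. For (iii) $\Rightarrow$ (iv), I would simply use $\abs{\lambda(A)}\le\abs\lambda(A)$ together with the fact that $\abs{\lambda(A)}>0$ already implies $\abs\lambda(A)>0$, hence $\abs\lambda(A)>\eta$; but this only bounds $\abs\lambda(A)$ from below, not $\abs{\lambda(A)}$. So instead I would run (iii) $\Rightarrow$ (i) and (i) $\Rightarrow$ (iv) separately, or argue (iv) from (iii) via the Hahn-type decomposition: given $A$ with $\abs\lambda(A)>0$, by finite additivity of $\abs\lambda$ and the approximation $\abs\lambda(A)=\sup\sum_i\abs{\lambda(A_i)}$ over finite partitions of $A$, there is a subset $B\subseteq A$, $B\in\A$, with $\abs{\lambda(B)}\ge\abs\lambda(A)/2>\eta/2$, giving (iv) with $c=\eta/2$.

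The main obstacle, and the only implication requiring real work, is (iv) $\Rightarrow$ (i) (equivalently (iii) $\Rightarrow$ (i)): I must show that a uniform positive lower bound on nonzero values of $\abs{\lambda(\cdot)}$ forces $\lambda$ to have finite range. Here the idea is that the range of $\lambda$, as a subset of $\R$, cannot have an accumulation point: if $\lambda(A_n)\to t$ with the $\lambda(A_n)$ distinct, then by passing to a subsequence and taking symmetric differences $A_n\triangle A_m$ one produces sets on which $\abs{\lambda}$ takes arbitrarily small positive values, using that $\abs{\lambda(A_n)-\lambda(A_m)}\le\abs\lambda(A_n\triangle A_m)$ and a further splitting to get $\abs{\lambda(\cdot)}$ itself small and positive — contradicting (iv). A boundedness-plus-no-accumulation-point argument then shows the range is finite. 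I expect the delicate point to be ensuring the symmetric differences are genuinely nontrivial (so that the value is strictly positive, not merely small), which is where one invokes that the $\lambda(A_n)$ were chosen distinct, and possibly an appeal to \cite[Lemma 11.1.3]{rao} or \cite[3.3.3]{rao} to finish the identification with $ba_0(\A)$.
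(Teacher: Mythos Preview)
Your cycle (i)$\Rightarrow$(ii)$\Rightarrow$(iii)$\Rightarrow$(iv)$\Rightarrow$(i) is reasonable, and your (i)$\Rightarrow$(ii) via the representation $\lambda=\sum_n\alpha_n\lambda_n$ is a valid alternative to the paper's route (the paper instead observes that the range of $\lambda^+$ lies inside the range of $\lambda$, so $\abs\lambda=\lambda^++\lambda^-$ inherits finite range directly from the Jordan decomposition). Your domination argument $0\le\abs\lambda\le\sum_n\abs{\alpha_n}\lambda_n$ does work, but the step ``dominated by finite range implies finite range'' deserves a line of justification: it holds because a positive charge with finite range partitions $\Omega$ into finitely many atoms, and any positive charge it dominates is constant on each atom.

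There are, however, two genuine gaps. First, your direct argument for (iii)$\Rightarrow$(iv) proves the wrong statement: starting from $\abs\lambda(A)>0$ and producing $B\subset A$ with $\abs{\lambda(B)}>\eta/2$ says nothing about $\abs{\lambda(A)}$ itself. One can have $\abs{\lambda(A)}$ tiny while $\abs\lambda(A)$ is large (positive and negative mass cancelling on $A$), and the existence of a subset where $\lambda$ is large does not rescue this. Second, your (iv)$\Rightarrow$(i) accumulation-point sketch stalls: writing $\lambda(A_n)-\lambda(A_m)=\lambda(A_n\setminus A_m)-\lambda(A_m\setminus A_n)$, if both terms are nonzero and of the same sign (each exceeding $c$ in absolute value by (iv)), their difference can be arbitrarily small without contradiction, and the symmetric difference itself then has $\abs{\lambda(A_n\triangle A_m)}>2c$, not small. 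The ``further splitting'' you allude to is precisely the missing idea.

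The paper closes both gaps at once by a different manoeuvre: from (iii) it shows that $\lambda^+$ itself satisfies a lower-bound condition. Given $A$ with $\lambda^+(A)>0$, one picks an approximate Hahn set $B$ (with $\lambda^+(B^c)+\lambda^-(B)$ very small) and checks that $\abs\lambda(A\cap B)>0$, hence $>\eta$ by (iii), and then $\lambda^+(A)\ge\abs\lambda(A\cap B)-\lambda^-(B)>\eta/2$. At this point both $\abs\lambda$ and $\lambda^+$ satisfy the positive-charge lower-bound criterion, and the paper invokes \cite[Proposition 11.1.5]{rao} (not 11.1.3 or 3.3.3) to conclude each has finite range; hence so does $\lambda=\lambda^+-(\abs\lambda-\lambda^+)$, giving (i) and thus (iv). The implication (iv)$\Rightarrow$(i) is likewise a direct appeal to the same proposition in \cite{rao}. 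You should either supply that approximate-Hahn step for $\lambda^+$ or cite the relevant result; the symmetric-difference route does not appear to reach the conclusion.
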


\begin{proof}
By construction, the range of $\lambda^+$ (resp. $\lambda^-$) is contained 
in that of $\lambda$ (resp. $-\lambda$) so that the range of 
$\abs\lambda=\lambda^++\lambda^-$ is finite if that of $\lambda$ is so. The 
implication (\textit{ii})$\Rightarrow$(\textit{iii}) is obvious. Let $\eta$ be as in 
(\textit{iii}) and let $A,B\in\A$ be such that $\lambda^+(A)>0$ and that 
$\lambda^+(B^c)+\lambda^-(B)<(\lambda^+(A)\wedge\eta)/2$. Then,
$\abs\lambda(A\cap B)\ge\lambda^+(A)-\lambda^+(B^c)\ge\lambda^+(A)/2>0$
so that $\abs\lambda(A\cap B)>\eta$, by (\textit{iii}), and so
\begin{align*}
\lambda^+(A)\ge\lambda^+(A\cap B)
\ge
\abs\lambda(A\cap B)-\lambda^-(B)
>
\eta/2
\end{align*}
But then, by \cite[Proposition 11.1.5]{rao}, both $\abs\lambda$ and $\lambda^+$
have finite range and the same must be true of $\lambda$, which implies (\textit{iv}).
The implication (\textit{iv})$\Rightarrow$(\textit{i}) is again a consequence of
\cite[Proposition 11.1.5]{rao}.
\end{proof}

Thus $ba_0(\A)$ is a vector sublattice of $ba(\A)$. Moreover,

\begin{lemma}
$ba_0(\A)_+$ is a convex, extreme subset of $ba(\A)_+$ with the property that
$\mu\ll m$ and $m\in ba_0(\A)$ imply $\mu\in ba_0(\A)$.
\end{lemma}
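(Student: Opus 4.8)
We need three things about $ba_0(\A)_+$ inside $ba(\A)_+$: convexity, the extreme/face property, and the hereditary property under absolute continuity. For convexity, I would simply invoke that $ba_0(\A)$ is a vector sublattice (established in the sentence preceding the statement, via Lemma \ref{lemma ba0}), so it is in particular a linear subspace and hence convex; intersecting with the convex cone $ba(\A)_+$ keeps it convex. For the hereditary property, suppose $m \in ba_0(\A)$ and $\mu \ll m$ with $\mu \in ba(\A)_+$. Using the characterization (iii) of Lemma \ref{lemma ba0} applied to $m$, there is $\eta > 0$ with $\abs{m}(A) > \eta$ whenever $\abs{m}(A) > 0$. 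I would then argue that $\mu$ inherits a gap of the same flavor: if $\mu(A) > 0$ then $\abs m(A) > 0$ (by $\mu \ll m$, which for finitely additive measures means $\abs\mu \ll \abs m$), hence $\abs m(A) > \eta$; but this alone does not bound $\mu(A)$ below. The correct route is to use that $\mu \ll m$ with $m$ of finite range forces $\mu$ itself to have finite range — this is essentially the content of the Radon--Nikodym-type structure for finite-range measures, or can be seen directly: the range of $m$ being finite partitions $\Omega$ (up to $\abs m$-null sets) into finitely many atoms of $\abs m$, and on each such atom $\mu$ is (a.s.) constant by absolute continuity, so $\mu$ takes finitely many values. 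Then $\mu \in ba_0(\A)$ by definition.

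**The face property.** This is the step I expect to be the main obstacle, so let me be explicit. We must show: if $\mu_1, \mu_2 \in ba(\A)_+$, $a \in (0,1)$, and $a\mu_1 + (1-a)\mu_2 \in ba_0(\A)_+$, then $\mu_1, \mu_2 \in ba_0(\A)_+$. Write $m = a\mu_1 + (1-a)\mu_2$. Since $\mu_1, \mu_2 \ge 0$, for any $A \in \A$ we have $a\mu_1(A) \le m(A)$, hence $\mu_1 \ll m$ (indeed $\mu_1(A) = 0$ whenever $m(A) = 0$, and the same passes to total variations since everything is nonnegative, so $\abs{\mu_1} = \mu_1 \ll m = \abs m$). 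Now apply the hereditary property just proved: $m \in ba_0(\A)$ and $\mu_1 \ll m$ give $\mu_1 \in ba_0(\A)$; symmetrically $\mu_2 \in ba_0(\A)$. So in fact the face property reduces to the hereditary property — which is why I would prove the hereditary statement first and then derive the extreme-set claim from it in one line. The only delicate point is confirming that for nonnegative finitely additive set functions, ``$\mu(A) = 0$ whenever $m(A) = 0$'' is the right reading of $\mu \ll m$ and coincides with $\abs\mu \ll \abs m$; this is standard (e.g.\ \cite[3.3.3]{rao} or the discussion in \cite{bible}) since for $\mu \ge 0$ we have $\abs\mu = \mu$ and monotonicity gives $m(B) = 0 \Rightarrow \mu(B) = 0$ for all $B \subseteq A$, $B \in \A$.

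**Order of steps.** First, record convexity from the sublattice property. Second, prove the hereditary property ``$\mu \ll m \in ba_0(\A) \Rightarrow \mu \in ba_0(\A)$'': reduce to showing $\mu$ has finite range, using that $m$'s finite range decomposes $\Omega$ into finitely many $\abs m$-atoms and $\mu$ is constant (a.s.) on each by absolute continuity; invoke \cite[Proposition 11.1.5]{rao} as in Lemma \ref{lemma ba0} to conclude finite range is equivalent to the gap condition if a more quantitative argument is cleaner. Third, derive the extreme-subset property: from $a\mu_1 + (1-a)\mu_2 \in ba_0(\A)$ and $\mu_i \ge 0$ deduce $\mu_i \ll (a\mu_1 + (1-a)\mu_2)$, then apply step two. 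The arithmetic in step two — making precise ``constant on each atom'' in the purely finitely additive setting, where atoms are defined only up to null sets — is the part requiring the most care, and I would lean on the cited results of \cite{rao} to keep it short.
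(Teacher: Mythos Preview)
Your proposal is correct, and the route you take is genuinely different from the paper's. You prove the hereditary property first (via the finite-atom structure of $m\in ba_0(\A)$) and then obtain the face property in one line from the observation $\mu_i\ll a\mu_1+(1-a)\mu_2$. The paper reverses this order: it establishes extremality directly by showing that when $t\lambda_1+(1-t)\lambda_2\in ba_0(\A)_+$ the supremum $\lambda_1\vee\lambda_2$ inherits the gap property of the convex combination (hence lies in $ba_0(\A)$), and only afterwards handles the hereditary claim via the Lebesgue decomposition of $\mu$ with respect to $m$, invoking the extremality just proved. Your reduction of ``extreme'' to ``hereditary'' is more economical and sidesteps both the detour through $\lambda_1\vee\lambda_2$ and the Lebesgue decomposition; the paper's route has the modest advantage of working entirely within the gap characterization of Lemma~\ref{lemma ba0}(iii) rather than unpacking the atomic structure. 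For your atom argument you do need the partition of $\Omega$ into carriers $F_1,\ldots,F_N$ of the distinct $\{0,1\}$-valued components of $\abs m$, which is precisely \cite[Proposition 5.2.2]{rao}; once that is in place the ``constant on each atom'' step is immediate, since every $A\in\A$ with $A\subset F_i$ is either $\abs m$-null or has $\abs m$-null complement in $F_i$, and $\mu\ll m$ transfers this dichotomy to $\mu$.
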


\begin{proof}
The first property is obvious since $ba_0(\A)$ is a vector space. Choose
$\lambda_1,\lambda_2\in ba_0(\A)_+$ and $0<t<1$ such that 
$\mu=t\lambda_1+(1-t)\lambda_2\in ba_0(\A)_+$. There exists then $\eta$
such that $A\in\A$ and $\mu(A)<\eta$ imply $\mu(A)=0$. Suppose that
$B\in\A$ is such that $(\lambda_1\vee\lambda_2)(B)<\eta$. Then,
$\mu(B)=0$ and thus $\lambda_1(B)=\lambda_2(B)=0$ and the same is
true of any $C\in\A$ such that $C\subset B$. But then
$
(\lambda_1\vee\lambda_2)(B)
=
\sup_{\{C\in\A:C\subset B\}}
\lambda_1(C)+\lambda_2(B\backslash C)
=0
$.
We conclude that $\lambda_1\vee\lambda_2\in ba_0(\A)$ and thus $\mu\in ba_0(\A)$ 
because $\mu\le\lambda_1\vee\lambda_2$. Assume that $\mu\ll m\in ba_0(\A)$. By
Lemma \ref{lemma ba0} there is no loss of generality in assuming $m,\mu\ge0$.
Let $\mu_m^c+\mu_m^\perp$ be the Lebesgue decomposition of $\mu$, with 
$\mu_m^c\ll m$ and $\mu_m^\perp\perp m$. By the first part of this Lemma, 
$\mu_m^c,\mu_m^\perp\in ba_0(\A)$. Thus for some $\eta>0$, $m(A)<\eta$ 
implies $\mu_m^c(A)=0$ and thus $m(A)=0$ as $m\ll\mu_m^c$.
\end{proof}

\begin{lemma}
\label{lemma lattice}
Continuous linear functionals on $L^0(\lambda)$ form a vector lattice. 
\end{lemma}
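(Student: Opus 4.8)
The plan is to show that the space $L^0(\lambda)^*$ of continuous linear functionals, a priori only an ordered vector space under pointwise ordering of functionals (i.e. $\phi\ge\psi$ iff $\phi(f)\ge\psi(f)$ for all $f\in L^0(\lambda)_+$), is in fact a vector lattice. By standard Riesz-space theory it suffices to prove that any $\phi\in L^0(\lambda)^*$ admits a positive part, that is, that the sublinear functional
\begin{equation*}
p(f)=\sup\{\phi(g):0\le g\le f\},\qquad f\in L^0(\lambda)_+,
\end{equation*}
is finite-valued on $L^0(\lambda)_+$; then $p$ extends by the Riesz–Kantorovich formula to a positive linear functional $\phi^+$ which is the supremum of $\phi$ and $0$ in the order dual, and one checks $\phi^+$ is continuous. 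So the whole matter reduces to a boundedness statement: for each fixed $f\ge0$ in $L^0(\lambda)$, the order interval $[0,f]=\{g\in L^0(\lambda):0\le g\le f\}$ is carried by $\phi$ into a bounded set of reals.

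First I would record that $[0,f]$ is a convex subset of $L^0(\lambda)$ which is bounded in $L^0(\lambda)$ — indeed $\abs\lambda^*(\abs g>c)\le\abs\lambda^*(f>c)\to0$ as $c\to\infty$ uniformly over $g\in[0,f]$ — and that it admits the $\lambda$-a.s. lower bound $0$. This is exactly the hypothesis configuration of the main Theorem \ref{th bdd}: a bounded, convex subset of $L^0(\lambda)$ admitting a lower bound is bounded in $L^1(\mu)$ for a suitable $\mu\in ba_*(\lambda)$ near $\lambda$. Since $\mu$ and $\lambda$ share the same null sets, $L^0(\mu)=L^0(\lambda)$ as topological spaces, so $\phi$ is still a continuous linear functional on $L^0(\mu)$; and on a bounded subset of $L^1(\mu)$ — which, being $L^1(\mu)$-bounded and lying in $L^0(\mu)$, is in particular bounded in the $L^0(\mu)$-metric and $L^1(\mu)$-norm bounded — continuity of $\phi$ for the metric of convergence in measure forces $\phi([0,f])$ to be bounded. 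Concretely: if $\phi(g_n)\to\infty$ along $g_n\in[0,f]$, then $g_n/\phi(g_n)\to0$ in $L^1(\mu)$ hence in $\lambda$-measure, so $\phi(g_n/\phi(g_n))=1\not\to0$ contradicts continuity of $\phi$ at $0$. Hence $p(f)<\infty$.

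The remaining steps are the routine Riesz-space verifications: $p$ is positively homogeneous and superadditive (in fact additive) on $L^0(\lambda)_+$, hence extends to a linear functional $\phi^+$ on $L^0(\lambda)=L^0(\lambda)_+-L^0(\lambda)_+$ via $\phi^+(f_1-f_2)=p(f_1)-p(f_2)$; one has $\phi^+\ge0$ and $\phi^+\ge\phi$, and $\phi^+$ is the least such, so $\phi^+=\phi\vee0$ exists in the order dual. Continuity of $\phi^+$ then follows because a positive linear functional on $L^0(\lambda)$ that is finite-valued is automatically continuous — its restriction to any order interval $[0,f]$ is bounded by $p(f)<\infty$, and boundedness on order intervals together with the lattice structure yields continuity for the metric \eqref{metric} by a direct estimate splitting $g$ as $g\wedge c + (g-c)^+$. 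Finally, once $\phi^+$ exists and is continuous for every $\phi$, the space $L^0(\lambda)^*$ is closed under finite suprema, $\phi\vee\psi=\psi+(\phi-\psi)^+$, and hence is a vector lattice.

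I expect the only real obstacle to be the finiteness of $p(f)$, i.e. the step invoking Theorem \ref{th bdd}; everything after that is bookkeeping. One subtlety to handle with care is that Theorem \ref{th bdd} changes the measure from $\lambda$ to $\mu$, so I must be explicit that $\mu\in ba_*(\lambda)$ guarantees $L^0(\mu)$ and $L^0(\lambda)$ coincide as topological vector spaces (same null sets, hence equivalent metrics of convergence in measure), which is what lets the same functional $\phi$ and its continuity be transported across — without this the $L^1(\mu)$-boundedness conclusion would be useless.
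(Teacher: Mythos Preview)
Your detour through Theorem~\ref{th bdd} is both unnecessary and contains a genuine error. The error is the claim that $\mu\in\Prob_*(\lambda)$ forces $L^0(\mu)=L^0(\lambda)$ \emph{as topological spaces}. In the finitely additive setting, having the same null sets does \emph{not} imply equivalence of the convergence-in-measure topologies: from $\abs\mu\le c\abs\lambda$ you get only that $\lambda$-convergence implies $\mu$-convergence, not the reverse. Consequently your step ``$g_n/\phi(g_n)\to0$ in $L^1(\mu)$ hence in $\lambda$-measure'' fails, and with it the contradiction with continuity of $\phi$ on $L^0(\lambda)$. (The paper even exploits the asymmetry $\cls\C0\lambda\subset\cls\C0\mu$ in Theorem~\ref{th sep}, and needs countable additivity to reverse it.)

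The irony is that the change of measure is entirely superfluous for the point you need. You already observed that $[0,f]$ is bounded in $L^0(\lambda)$: $\abs\lambda^*(\abs g>c)\le\abs\lambda^*(f>c)$ uniformly in $g\in[0,f]$. But a continuous linear functional on any topological vector space is automatically bounded on bounded sets --- take the neighborhood $U=\phi^{-1}((-1,1))$ and absorb $[0,f]$ into $tU$. That gives $p(f)<\infty$ immediately, with no appeal to Theorem~\ref{th bdd} (which, incidentally, lies in a later section that the paper explicitly describes as independent of Section~\ref{sec riesz}). Equivalently, your own contradiction argument works directly in $L^0(\lambda)$: if $g_n\in[0,f]$ and $\phi(g_n)\to\infty$, then $0\le g_n/\phi(g_n)\le f/\phi(g_n)$, so $\abs\lambda^*(g_n/\phi(g_n)>\delta)\le\abs\lambda^*(f>\delta\phi(g_n))\to0$, and continuity of $\phi$ is violated.

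This is exactly the paper's route: order intervals $[h,f]\subset h+\{g:\abs g\le\abs{f-h}\}$ are bounded in $L^0(\lambda)$, so every continuous linear functional is order bounded, and the lattice structure of the order-bounded dual follows from standard Riesz-space theory (Aliprantis--Burkinshaw). Your Riesz--Kantorovich bookkeeping is fine; just excise the measure change and the finiteness of $p(f)$ becomes a one-line consequence of continuity.
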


\begin{proof}
Indeed, if 
$f\in L^0(\lambda)$ the set $\mathscr U(f)=\{g\in L^0(\lambda):\abs{g}\leq\abs f\}$ 
is bounded in $L^0(\lambda)$ and so is any order bounded set 
$[h,f]=\{g\in L^0(\lambda):h\le g\le f\}$ given the inclusion 
$[h,f]\subset h+\mathscr U(f-h)$. Any continuous linear functional $\phi$ on 
$L^0(\lambda)$ is thus order bounded and the claim follows from 
\cite[Theorem 1.13]{aliprantis burkinshaw}. 
\end{proof}

\begin{theorem}
\label{th riesz}
There exists a linear isomorphism between the space of continuous linear functionals 
on $L^0(\lambda)$ and the space $ba_0(\lambda)$ and this is defined implicitly 
via the identity
\begin{equation}
\label{riesz}
\phi(f)=\int fd\mu\qquad f\in L^0(\lambda)
\end{equation}
\end{theorem}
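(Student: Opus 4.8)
The plan is to exhibit the isomorphism in both directions and to verify that it lands in the prescribed space. One direction is easy: given $\mu\in ba_0(\lambda)$, the functional $\phi_\mu(f)=\int f\,d\mu$ is well defined on $L^0(\lambda)$ because, by Lemma \ref{lemma ba0}, $\mu$ has finite range and hence $\abs\mu\le c\abs\lambda$ for a suitable $c$ (indeed, since the nonzero values of $\abs\mu$ are bounded away from $0$, a set on which $\abs\lambda$ is small must be $\mu$-null), so that every $\lambda$-measurable function is $\mu$-integrable and $\lambda$-null functions are $\mu$-null. Linearity is clear, and continuity follows from the metric \eqref{metric}: if $d(f_n,0)\to0$ then $f_n\to0$ in $\abs\lambda$-measure, hence in $\abs\mu$-measure, and boundedness of $f_n$ together with $\abs\mu\le c\abs\lambda$ forces $\int f_n\,d\mu\to0$ (split the integral over $\{\abs{f_n}>\epsilon\}$ and its complement, using that $\mu$ is bounded). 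So $\mu\mapsto\phi_\mu$ maps $ba_0(\lambda)$ linearly and injectively into the dual; injectivity is immediate by testing against indicators $\set A$, $A\in\A$.

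The substantive direction is to show every continuous linear functional $\phi$ on $L^0(\lambda)$ arises this way. First I would restrict $\phi$ to $\Sim(\A)$ and set $\mu(A)=\phi(\set A)$ for $A\in\A$; finite additivity of $\mu$ is immediate from linearity, and boundedness of $\mu$ follows because $\{\set A:A\in\A\}$ is a bounded subset of $L^0(\lambda)$ and a continuous linear functional is bounded on bounded sets (this last fact needs the metric estimate from \eqref{metric}, or one argues via the order-boundedness already recorded in Lemma \ref{lemma lattice}). Next, $\mu\ll\lambda$: if $\abs\lambda(A)=0$ then $\set A$ is $\lambda$-null, so $\set A=0$ in $L^0(\lambda)$ and $\mu(A)=\phi(0)=0$; the same applies to every subset of $A$ in $\A$, giving $\abs\mu(A)=0$. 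Then I would show $\phi(f)=\int f\,d\mu$ for $f\in\Sim(\A)$ by linearity, and extend to all of $L^0(\lambda)$ by density of $\Sim(\A)$ together with continuity of both sides — here one needs that $f\mapsto\int f\,d\mu$ is already known to be $d$-continuous, which is exactly the forward direction, so the argument is: $\phi$ and $\phi_\mu$ are two continuous functionals agreeing on the dense set $\Sim(\A)$, hence equal.

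The main obstacle is proving that the measure $\mu$ so obtained actually lies in $ba_0(\lambda)$, i.e. that it has finite range. The point is that $\mu$ cannot have arbitrarily small nonzero values: if there were sets $A_n\in\A$ with $0<\abs\mu(A_n)\to0$, one can, by the Hahn-type decomposition, replace $A_n$ by subsets on which $\mu$ itself is small and of one sign, and then — crucially — one must produce from these a fixed element or a bounded sequence of $L^0(\lambda)$ on which $\phi$ is unbounded, contradicting continuity. The natural candidate is a sum $\sum_n c_n\set{A_n}$ with coefficients $c_n\uparrow\infty$ chosen slowly enough that the partial sums stay $d$-bounded (possible because $\abs\lambda(A_n)$ need not be controlled, so one instead exploits that the $\phi$-masses $\abs\mu(A_n)$ are summably small against $c_n$ while $\abs\lambda(A_n)$ can be kept small by passing to the nonzero part) yet $\phi$ of the partial sums diverges; turning this heuristic into a rigorous selection of a disjointification or an almost-disjoint subsequence, and checking the resulting function is genuinely in $L^0(\lambda)$ and bounded, is the delicate step. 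I would organise it by first using Lemma \ref{lemma ba0}(iii)–(iv): it suffices to rule out the existence of $A_n$ with $\abs{\mu(A_n)}\to0$ strictly positive, and the contradiction is extracted from continuity of $\phi$ at $0$ applied to a cleverly normalised sequence built from the $\set{A_n}$, after which Lemma \ref{lemma ba0} upgrades "values bounded away from $0$" to "finite range", placing $\mu$ in $ba_0(\lambda)$ and completing the proof.
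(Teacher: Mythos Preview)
Your route differs from the paper's: instead of invoking the representation theorem of \cite{JMAA} to obtain $\phi(f)=\int f\,d\mu$ on all of $L^0(\lambda)$ at once, you define $\mu(A)=\phi(\set A)$ and plan to extend from $\Sim(\A)$ by density once $\mu\in ba_0(\lambda)$ is in hand. This is more self-contained and perfectly viable; both approaches ultimately hinge on the same finite-range step, and yours has the advantage of not importing an external result.

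That step, however, has a real gap in your sketch. You want a contradiction from sets $A_n$ with $0<\abs\mu(A_n)\to0$ by exhibiting test functions that tend to $0$ in $L^0(\lambda)$ while $\phi$ does not vanish on them. For $c_n\set{A_n}$ (or any sum built from these) this forces $\abs\lambda(A_n)\to0$, which nothing in your hypotheses guarantees: small $\mu$-mass does not entail small $\lambda$-mass. Your phrase ``$\abs\lambda(A_n)$ can be kept small by passing to the nonzero part'' is not a mechanism, and the proposed disjointification or ``slow enough'' choice of $c_n$ does not address the difficulty.

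The missing idea, and the paper's device, is the Lebesgue decomposition of $\abs\lambda$ relative to $\mu$. Write $\abs\lambda=\lambda_\mu^c+\lambda_\mu^\perp$ with $\lambda_\mu^c\ll\mu$ and $\lambda_\mu^\perp\perp\mu$, choose $G_k\in\A$ with $\mu(G_k^c)+\lambda_\mu^\perp(G_k)<2^{-k}$, and replace each $H_n$ (with $0<\mu(H_n)\le2^{-n}$) by $H'_n=H_n\cap G_{k_n}$ for $k_n$ large. Then $\mu(H'_n)$ remains positive, $\lambda_\mu^\perp(H'_n)$ is small by construction, and $\lambda_\mu^c(H'_n)$ is small because $\lambda_\mu^c\ll\mu$ and $\mu(H'_n)\to0$; hence $\abs\lambda(H'_n)\to0$. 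Now the single indicators $f_n=\mu(H'_n)^{-1}\set{H'_n}$ already give the contradiction: $f_n\to0$ in $L^0(\lambda)$ while $\phi(f_n)=1$. No sums or disjointification are needed.
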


\begin{proof} 
By Lemma \ref{lemma lattice} there is no loss of generality in assuming, as 
we shall do henceforth, that $\phi$ is positive. By  \cite[Theorem 1]{JMAA}
we have the representation
\begin{equation}
\label{representation}
\phi(f)=\phi^\perp(f)+\int fd\mu
\end{equation}
where $\phi^\perp$ is a positive linear functional on $L^0(\lambda)$ with 
$\phi^\perp(1)=0$ and $\mu\in ba_+$ is such that 
$L^0(\lambda)\subset L^1(\mu)$. Thus $\phi^\perp(f\wedge n)=0$ for all 
$f\in L^0(\lambda)_+$ so that
\begin{align*}
\phi(f)
=
\lim_n\phi(f\wedge n)
=
\lim_n\int (f\wedge n)d\mu
=
\int fd\mu
\end{align*}
as a consequence of the fact that $f\wedge n$ converges to $f$ in $L^0(\lambda)$
and in $L^1(\mu)$ since $f\in L^1(\mu)$. $\mu\ll\lambda$ is a consequence of 
$\phi$ being continuous. Suppose that for each $n\in\N$ there is $H_n\in\A$ such 
that $0<\mu(H_n)\leq2^{-n}$ and let $G_k\in\A$ be such that 
$\mu(G_k^c)+\lambda_\mu^\perp(G_k)<2^{-k}$, 
with $\lambda_\mu^\perp$ being the part of $\lambda$ orthogonal to $\mu$ 
emerging from Lebesgue decomposition. Then, choosing the integer $k_n$ 
large enough and $H'_n=H_n\cap G_{k_n}$, we have 
$0<\mu(H_n)-2^{-k_n}\le\mu(H'_n)\leq2^{-n}$ and $\lim_n\lambda(H'_n)=0$. 
If $f_n=\mu(H'_n)^{-1}\set{H'_n}$ then $\seqn{f}$ $\lambda$-converges to $0$ 
but $\phi(f_n)=\int f_nd\mu=1$, a contradiction. We conclude that for $n$ large 
enough $\mu(A)\leq2^{-n}$ implies $\mu(A)=0$, i.e. $\mu\in ba_0(\lambda)$. 
Conversely, assume that $\mu\in ba_0(\lambda)$ and that $U\subset L^0(\lambda)$ 
is bounded in $L^0(\lambda)$. Then, choosing $\delta$ accurately, 
$\abs\mu^*(\abs f>\delta)=0$ for all $f\in U$ so that 
$\sup_{f\in U}\dabs{\int fd\mu}\le\delta\norm\mu$ and thus the 
right hand side of \eqref{riesz} defines a bounded linear functional on 
$L^0(\lambda)$.
\end{proof}

By Theorem \ref{th riesz}, $L^0(\lambda)\subset L^1(\mu)$ upon a change 
of the underlying measure. The inclusion $\mu\in ba_0(\lambda)$ implies also 
that a set bounded in $L^0(\lambda)$ is necessarily bounded in $L^1(\mu)$ or 
even in $L^\infty(\mu)$. Moreover, if $m$ is countably additive, then so is $\mu$. 
However, $\lambda$ and $\mu$ may be very far from one another, e.g. for what 
concerns null sets.

A linear functional $\phi$ on $L^0(\lambda)$ is strictly positive if it is positive and 
if $f\in L^0(\lambda)_+$ and $\abs\lambda^*(f>c)>0$ for some $c>0$ imply 
$\phi(f)>0$.

\begin{corollary}
\label{corollary positive}
$L^0(\lambda)$ admits a continuous, strictly positive linear functional if and only if 
$\lambda\in ba_0(\A)$.
\end{corollary}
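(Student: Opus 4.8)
The plan is to extract everything from the Riesz-type representation of Theorem~\ref{th riesz}: that result identifies the continuous linear functionals on $L^0(\lambda)$ with the measures $\mu\in ba_0(\lambda)$ via $\phi(f)=\int fd\mu$, the positive ones corresponding to $\mu\ge0$. So the corollary amounts to deciding for which $\lambda$ one of these $\mu$'s can be taken strictly positive in the sense defined just above the statement.

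For sufficiency, suppose $\lambda\in ba_0(\A)$. By Lemma~\ref{lemma ba0} the set function $\abs\lambda$ also has finite range, hence $\abs\lambda\in ba_0(\lambda)$, and Theorem~\ref{th riesz} makes $\phi(f)=\int fd\abs\lambda$ a continuous linear functional on $L^0(\lambda)$ (in particular $L^0(\lambda)\subseteq L^1(\abs\lambda)$, so it is well defined); it is plainly positive. To check strict positivity, let $f\in L^0(\lambda)_+$ satisfy $\abs\lambda^*(f>c)>0$ for some $c>0$; then the finitely additive Tchebycheff inequality \eqref{tcheby} yields $\phi(f)=\abs\lambda(f)\ge c\,\abs\lambda^*(f>c)>0$.

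For necessity, let $\phi$ be continuous and strictly positive, hence positive, and write $\phi(f)=\int fd\mu$ with $\mu\in ba_0(\lambda)$, $\mu\ge0$, by Theorem~\ref{th riesz}. Testing strict positivity on indicators is the key step: if $A\in\A$ with $\abs\lambda(A)>0$, then $\abs\lambda^*(\set A>c)=\abs\lambda(A)>0$ for any $0<c<1$, so $\mu(A)=\phi(\set A)>0$; equivalently $\mu(A)=0$ implies $\abs\lambda(A)=0$. Since $\mu\in ba_0(\lambda)\subseteq ba_0(\A)$, Lemma~\ref{lemma ba0} provides $\eta>0$ with $\mu(A)\le\eta\Rightarrow\mu(A)=0$, while $\mu\ll\lambda$ provides $\delta>0$ with $\abs\lambda(A)<\delta\Rightarrow\mu(A)<\eta$; chaining these implications, $\abs\lambda(A)<\delta$ forces $\abs\lambda(A)=0$, which is precisely condition (iii) of Lemma~\ref{lemma ba0} for $\abs\lambda$, so $\lambda\in ba_0(\A)$. (Equivalently: $\abs\lambda$ and $\mu$ are mutually absolutely continuous with $\mu\in ba_0(\A)$, and the lemma asserting that $\nu\ll m\in ba_0(\A)$ forces $\nu\in ba_0(\A)$ applies directly.)

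The whole thing is short once Theorem~\ref{th riesz} is in hand; the only delicate point is the final implication in the necessity part, where equivalence of the null ideals of $\mu$ and $\abs\lambda$ does not on its own transport the finite-range property in the finitely additive world, so one genuinely must combine the uniform gap in the range of $\mu$ with the modulus of the absolute continuity $\mu\ll\lambda$.
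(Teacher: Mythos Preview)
Your proof is correct and follows essentially the same route as the paper's: both directions use the Riesz representation of Theorem~\ref{th riesz}, take $\abs\lambda$ itself as the strictly positive functional in the sufficiency part, and for necessity combine the equivalence of null ideals of $\mu$ and $\abs\lambda$ with the gap in the range of $\mu\in ba_0(\A)$ and the modulus of $\mu\ll\lambda$ to force $\lambda\in ba_0(\A)$ via Lemma~\ref{lemma ba0}. Your write-up is slightly more explicit (invoking \eqref{tcheby} for strict positivity and spelling out the $\eta$--$\delta$ chain), but there is no substantive difference.
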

\begin{proof}
In fact, if $\phi$ and $\mu$ are related via \eqref{riesz} then in order for $\phi$ to 
be strictly positive one should have $\mu\ge0$ and $\abs\lambda(A)=0$ if and only 
if $\mu(A)=0$. But this together with $\mu\in ba_0(\lambda)$ implies the 
existence of $\delta>0$ such that $\abs\lambda(A)<\delta$ implies $\mu(A)=0$ 
and thus $\abs\lambda(A)=0$. We conclude that $\lambda\in ba_0(\A)$, by Lemma 1. 
On the other hand, if $\lambda\in ba_0(\A)$ then the integral $\int fd\abs\lambda$ is 
well defined for all $f\in L^0(\lambda)$ and strictly positive as $f\in L^0(\lambda)_+$ 
and $\int fd\abs \lambda=0$ imply $\abs\lambda^*(f>c)=0$ for all $c>0$, i.e. $f$ is 
$\lambda$ null.
\end{proof} 

Thus if $\lambda\notin ba_0(\A)$ there does not exist any strictly positive linear 
functional, so that if $\mu$ is as in Theorem \ref{th riesz} one necessarily has sets 
$A\in\A$ such that $\lambda(A)>0=\mu(A)$. The goal of the next section will be to 
find $\mu\in ba(\A)$ that guarantees the integrability of some subset of $L^0(\lambda)$ 
without affecting null sets.

We provide as a last result a different proof of \cite[Theorem 1]{mukherjee summers}%
\footnote{I am in debt with an anonymous referee for calling my attention on the
paper of Mukherjee and Summers \cite{mukherjee summers}.}.

\begin{corollary}[Mukherjee and Summers]
Let $\A$ be a $\sigma$-algebra, $\lambda\in ca(\A)$ and let $\phi$ be a continuous
linear functional on $L^0(\lambda)$. Then either $\phi=0$ or $\lambda$ has atoms.
\end{corollary}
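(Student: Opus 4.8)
The plan is to apply Theorem \ref{th riesz} to reduce the statement to a claim purely about the finitely additive measure $\mu\in ba_0(\lambda)$ representing $\phi$ via \eqref{riesz}, and then to exploit the special structure forced on $\mu$ when $\A$ is a $\sigma$-algebra, $\lambda$ is countably additive, and $\lambda$ is non-atomic. First I would observe that by Theorem \ref{th riesz} there is $\mu\in ba_0(\lambda)$ with $\phi(f)=\int f\,d\mu$ for all $f\in L^0(\lambda)$; by Lemma \ref{lemma ba0} (applied, say, to its characterization \textit{(iii)}) the property $\mu\in ba_0(\lambda)\subset ba_0(\A)$ means there is $\eta>0$ such that $A\in\A$ and $\abs\mu(A)>0$ imply $\abs\mu(A)>\eta$. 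The goal is to show that if $\lambda$ has no atoms then no such nonzero $\mu$ with $\mu\ll\lambda$ can exist, forcing $\mu=0$ and hence $\phi=0$.

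The key step is the following dichotomy: since $\mu\ll\lambda$ and $\lambda\in ca(\A)$ is non-atomic, I claim $\abs\mu$ takes arbitrarily small positive values unless it vanishes identically. Suppose $\abs\mu\neq0$, so $\abs\mu(\Omega)>0$; since $\mu\ll\lambda$, for every $\epsilon>0$ there is $\delta>0$ with $\lambda(A)<\delta\Rightarrow\abs\mu(A)<\epsilon$ — here I would invoke the standard $\epsilon$--$\delta$ form of absolute continuity, which holds because $\lambda$ is countably additive (or, alternatively, derive it directly from $\mu\in ba_\infty(\lambda)$-type bounds, but in general $ba_0(\lambda)$ set functions need not be $ba_\infty(\lambda)$, so I would instead argue via the Lebesgue decomposition of $\abs\mu$ with respect to $\lambda$ and note that the singular part is killed by $\mu\ll\lambda$, leaving $\abs\mu$ equivalent to a density integrated against $\lambda$). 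Then, using non-atomicity of $\lambda$, pick a set $B\in\A$ with $0<\lambda(B)<\delta$ — non-atomicity of a countably additive measure on a $\sigma$-algebra guarantees sets of every intermediate measure value by Sierpiński's theorem, or at least sets of arbitrarily small positive $\lambda$-measure carrying positive $\abs\mu$-mass, which one extracts by a standard bisection/exhaustion argument since $\abs\mu$ is not concentrated on a $\lambda$-null set (as $\abs\mu\ll\lambda$). This yields $0<\abs\mu(B)<\epsilon$; choosing $\epsilon<\eta$ contradicts Lemma \ref{lemma ba0}\textit{(iii)}.

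I expect the main obstacle to be the extraction step: producing, inside a set of small $\lambda$-measure, a subset with positive but small $\abs\mu$-mass. The clean route is to first pass to the Lebesgue decomposition $\abs\mu=\nu+\sigma$ of $\abs\mu$ with respect to $\lambda$, where $\nu\ll\lambda$ in the strong ($\epsilon$--$\delta$) sense and $\sigma\perp\lambda$; since $\mu\ll\lambda$ forces $\sigma=0$, we get that $\abs\mu$ itself is strongly absolutely continuous with respect to $\lambda$. Then for $\epsilon=\eta/2$ choose the corresponding $\delta$; since $\lambda$ is non-atomic and $\abs\mu(\Omega)>0$, a finite partition of $\Omega$ into pieces of $\lambda$-measure less than $\delta$ (possible by non-atomicity via repeated bisection) must contain a piece of positive $\abs\mu$-measure, and that piece has $\abs\mu$-measure in $(0,\eta/2)$, contradicting the lower bound $\eta$. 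Hence $\abs\mu=0$, so $\mu=0$ and $\phi=0$.

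\begin{proof}
By Theorem \ref{th riesz} there is $\mu\in ba_0(\lambda)$ such that $\phi(f)=\int f\,d\mu$ for all $f\in L^0(\lambda)$, and it suffices to prove that if $\lambda$ has no atoms then $\mu=0$. Since $\mu\in ba_0(\lambda)\subset ba_0(\A)$, Lemma \ref{lemma ba0} provides $\eta>0$ such that $A\in\A$ and $\abs\mu(A)>0$ imply $\abs\mu(A)>\eta$. Let $\abs\mu=\nu+\sigma$ be the Lebesgue decomposition with respect to $\lambda$, with $\nu\ll\lambda$ and $\sigma\perp\lambda$. As $\mu\ll\lambda$ we have $\abs\mu\ll\lambda$ and hence $\sigma=0$, so $\abs\mu=\nu$ is absolutely continuous with respect to $\lambda$ in the $\epsilon$--$\delta$ sense (here we use that $\lambda\in ca(\A)$). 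Choose $\delta>0$ so that $A\in\A$ and $\lambda(A)<\delta$ imply $\abs\mu(A)<\eta$. Because $\lambda$ is a non-atomic countably additive measure on a $\sigma$-algebra, repeated bisection yields a finite partition $\Omega=\bigcup_{j=1}^N A_j$ with $A_j\in\A$ and $\lambda(A_j)<\delta$ for each $j$. Then $\abs\mu(A_j)<\eta$ for every $j$, and by the choice of $\eta$ this forces $\abs\mu(A_j)=0$ for every $j$; summing, $\abs\mu(\Omega)=0$, i.e. $\mu=0$ and therefore $\phi=0$.
\end{proof}
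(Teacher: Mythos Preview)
Your proof is correct, but it follows a genuinely different route from the paper's. The paper argues directly: assuming $\mu\in ba_0(\lambda)_+$ is nonzero, it first observes that $\mu$ is countably additive (since $\mu\ll\lambda$ in the $\epsilon$--$\delta$ sense and $\lambda\in ca(\A)$) and hence admits a Radon--Nikodym density $f_\mu\in L^1(\abs\lambda)$; then it takes an atom $A'$ of $\mu$ (which exists because $\mu$ has finite range), restricts to $A=A'\cap\{f_\mu\ge c\}$ for $c$ small enough that $\mu(f_\mu<c)=0$, and checks that on $A$ the inequality $\abs\lambda\le c^{-1}\mu$ forces $A$ to be an atom of $\lambda$ as well. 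Your argument is the contrapositive and avoids the Radon--Nikodym theorem entirely: non-atomicity of $\lambda$ gives a finite $\A$-partition into pieces of $\lambda$-measure below any prescribed $\delta$, and $\epsilon$--$\delta$ absolute continuity of $\abs\mu$ then pushes each piece below the threshold $\eta$ from Lemma~\ref{lemma ba0}, forcing $\abs\mu=0$. This is more elementary and is in fact the countably additive instance of the paper's later Theorem~2 (that $ba_0(\lambda)=\{0\}$ iff $\lambda$ is strongly continuous), since for $\lambda\in ca(\A)$ non-atomicity coincides with strong continuity. One small redundancy: the Lebesgue decomposition detour is unnecessary here, because $\mu\in ba_0(\lambda)\subset ba(\lambda)$ already gives $\epsilon$--$\delta$ absolute continuity of $\mu$ (and hence of $\abs\mu$) with respect to $\lambda$ by the paper's conventions---or, more directly, continuity of $\phi$ yields $\lambda(A_n)\to0\Rightarrow\set{A_n}\to0$ in $L^0(\lambda)\Rightarrow\mu(A_n)\to0$.
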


\begin{proof}
Let $\mu\in ba_0(\lambda)_+$, $\mu\ne0$. Under the current assumptions, $\mu$ 
admits a Radon Nikodym derivative $f_\mu\in L^1(\abs\lambda)$, moreover $\mu$ has 
atoms. Let $\eta>0$ be such that $A\in\A$ and $\mu(A)<\eta$ imply $\mu(A)=0$. 
Let also $c<\eta/\norm\lambda$. Then $\mu(f_\mu<c)\le c\abs\lambda(f_\mu<c)<\eta$ 
so that $\mu(f_\mu<c)=0$. If $A'\in\A$ is an atom of $\mu$ then so is 
$A=A'\cap\{f_\mu\ge c\}$. If $B\in\A$ and $B\subset A$ then 
$\abs\lambda(B)\le c^{-1}\mu(B)$ so that either $\abs\lambda(B)=0$ or 
$\abs\lambda(A\backslash B)=0$. Thus $A$ is an atom for $\lambda$ too and the claim
follows from Theorem \ref{th riesz}.
\end{proof}

Mukherjee and Summers prove this claim using the fact that if $\lambda\in ca(\A)$ 
is atomless then its range is convex, a fact which is simply not true under finite additivity, 
see the examples in \cite[p. 143]{rao}. To prove a corresponding version we need the 
decomposition of Sobczyk and Hammer, see \cite[Theorem 5.2.7 and Remark 5.2.8]{rao}: 
each $\lambda\in ba(\A)$ decomposes uniquely as
\begin{equation}
\label{sob}
\lambda=\lambda_0+\sum_na_n\lambda_n
\end{equation}
where $\lambda_0$ is strongly continuous (i.e. such that for each $\varepsilon>0$
there exists a finite partition ${A_1,\ldots,A_N}\subset\A$ such that 
$\sup_n\abs{\lambda_0}(A_n)<\varepsilon$), the $\lambda_n$'s are distinct and 
$\{0,1\}$-valued, $a_n\ne0$ and $\sum_n\abs{a_n}<\infty$.

\begin{theorem}
$ba_0(\lambda)\ne\{0\}$ if and only $\lambda$ is not strongly continuous.
\end{theorem}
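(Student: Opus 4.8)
The plan is to prove both directions using the Sobczyk--Hammer decomposition \eqref{sob}. For the ``if'' direction, suppose $\lambda$ is not strongly continuous, so in \eqref{sob} there is at least one term $a_1\lambda_1$ with $\lambda_1$ a nonzero $\{0,1\}$-valued set function and $a_1\neq0$. I would consider the candidate $\mu=\abs{a_1}\lambda_1$ (or simply $\lambda_1$) and check that $\mu\in ba_0(\lambda)$. Finite range is immediate since $\mu$ takes only the values $0$ and $\abs{a_1}$. Absolute continuity $\mu\ll\lambda$ follows because the terms in \eqref{sob} are ``disjoint'' in the lattice sense: if $\abs\lambda(A)=0$ then, comparing with the decomposition, $\lambda_1(A')=0$ for every $A'\subseteq A$ in $\A$, hence $\mu(A)=0$. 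The mild technical point here is to justify carefully that a single $\{0,1\}$-valued summand is dominated by $\abs\lambda$; this should follow from the mutual singularity built into \eqref{sob}, together with Lemma \ref{lemma ba0}. Since $\mu\neq0$, this gives $ba_0(\lambda)\neq\{0\}$.

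For the ``only if'' direction, suppose $\lambda$ \emph{is} strongly continuous and let $\mu\in ba_0(\lambda)$; I want to show $\mu=0$. By Lemma \ref{lemma ba0} there is no loss in assuming $\mu\geq0$, and by Lemma \ref{lemma ba0}(iii) there exists $\eta>0$ such that $A\in\A$ and $\mu(A)<\eta$ force $\mu(A)=0$. Now I would invoke the key fact that $\mu\ll\lambda$ together with strong continuity of $\lambda$ forces $\mu$ itself to be strongly continuous: given $\varepsilon>0$, by the finitely additive Radon--Nikod\'ym type control (or directly, by an $\varepsilon$--$\delta$ argument for $\mu\ll\lambda$ plus the defining partition property of strong continuity of $\lambda$) one can find a finite partition $\{A_1,\ldots,A_N\}\subset\A$ with $\sup_n\mu(A_n)<\eta$. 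But then $\mu(A_n)=0$ for every $n$, whence $\mu(\Omega)=\sum_n\mu(A_n)=0$ and $\mu=0$.

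The main obstacle is the implication ``$\mu\ll\lambda$ and $\lambda$ strongly continuous $\Rightarrow$ $\mu$ strongly continuous.'' Under countable additivity this is routine, but under finite additivity absolute continuity $\mu\ll\lambda$ is the ``$\varepsilon$--$\delta$'' condition ($\abs\lambda(A)<\delta_\varepsilon\Rightarrow\mu(A)<\varepsilon$), and I must combine it with the partition characterization of strong continuity of $\lambda$: choose $\delta$ for the target $\varepsilon=\eta$, then pick a finite partition of $\Omega$ on whose blocks $\abs\lambda$ is below $\delta$, and conclude $\mu$ is below $\eta$ on those blocks. One should double-check that ``$\mu\ll\lambda$'' in this paper indeed carries the $\varepsilon$--$\delta$ formulation for finitely additive set functions (it does, per \cite{bible}), so that this step is legitimate; everything else is bookkeeping with \eqref{sob} and Lemma \ref{lemma ba0}.
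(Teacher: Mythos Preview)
Your proposal is correct and matches the paper's own proof essentially line for line: for the ``if'' direction the paper also takes a $\{0,1\}$-valued component $\lambda_n$ from the Sobczyk--Hammer decomposition \eqref{sob} as the witness in $ba_0(\lambda)$, and for the ``only if'' direction it combines the $\varepsilon$--$\delta$ form of $\mu\ll\lambda$ with Lemma~\ref{lemma ba0}(iii) to obtain a threshold below which $\abs\mu$ vanishes, then uses a fine $\A$-partition coming from strong continuity of $\abs\lambda$ to conclude $\mu=0$. Your only extra care---checking that $\lambda_1\ll\lambda$---is a point the paper leaves implicit (it follows since the $\lambda_n$'s are mutually singular and $\lambda_c\perp\lambda_d$, so $\abs{a_1}\lambda_1\le\abs{\lambda_d}\le\abs\lambda$).
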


\begin{proof}
If $\lambda$ is not strongly continuous, $ba_0(\lambda)$ contains each $\{0,1\}$-%
valued component of $\lambda$ in the decomposition \eqref{sob}. Conversely, if $\lambda$ 
is strongly continuous and $\mu\in ba_0(\lambda)$, then for $\varepsilon>0$ small 
enough so that $0<\abs\lambda(A)<\varepsilon$ implies $\abs\mu(A)=0$ we find an 
$\A$-measurable finite partition of $\Omega$ on which $\abs\mu$ vanishes so that 
$\mu=0$.
\end{proof}

We close by proving a result that, despite being pretty straightforward, will be
useful in the sequel.

\begin{lemma}
\label{lemma cts}
Each $\lambda\in ba(\A)$ decomposes uniquely as
\begin{equation}
\label{cts}
\lambda=\lambda_c+\lambda_d
\end{equation}
where $\lambda_c$ is strongly continuous and $\lambda_d$ is strongly discontinuous,
i.e. orthogonal to any strongly continuous element of $ba(\A)$. Moreover, $\lambda_c$
coincides with $\lambda_0$ in \eqref{sob}.
\end{lemma}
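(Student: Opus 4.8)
The plan is to exploit the Sobczyk–Hammer decomposition \eqref{sob} directly, splitting off the strongly continuous part and bundling the rest. Write $\lambda=\lambda_0+\sum_n a_n\lambda_n$ as in \eqref{sob} and set $\lambda_c=\lambda_0$ and $\lambda_d=\sum_n a_n\lambda_n$. The series $\sum_n a_n\lambda_n$ converges in total variation norm since $\sum_n\abs{a_n}<\infty$ and each $\lambda_n$ is $\{0,1\}$-valued (hence of unit variation), so $\lambda_d\in ba(\A)$ is well defined, and $\lambda=\lambda_c+\lambda_d$ by construction. It remains to check that $\lambda_d$ is strongly discontinuous in the stated sense and to prove uniqueness.

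For strong discontinuity, I would first argue that $\abs{\lambda_d}=\sum_n\abs{a_n}\lambda_n$: the terms $a_n\lambda_n$ are mutually orthogonal (the $\lambda_n$ are distinct $\{0,1\}$-valued measures, hence pairwise singular — this is part of what the Sobczyk–Hammer theorem gives), so the variation of the sum is the sum of the variations. Now let $\nu\in ba(\A)$ be strongly continuous; I must show $\abs\nu\wedge\abs{\lambda_d}=0$. Since $\abs\nu\wedge\abs{\lambda_d}\le\abs\nu\wedge\sum_n\abs{a_n}\lambda_n$ and the lattice operations are continuous for the variation norm, it suffices to show $\abs\nu\wedge\lambda_n=0$ for each $n$, i.e. that a $\{0,1\}$-valued measure is orthogonal to every strongly continuous measure. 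This is standard: if $\lambda_n$ is $\{0,1\}$-valued and $\varepsilon>0$, strong continuity of $\nu$ gives a finite partition $A_1,\ldots,A_N$ with $\sup_i\abs\nu(A_i)<\varepsilon$; exactly one $A_i$ carries $\lambda_n$ (has $\lambda_n(A_i)=1$), so letting $\varepsilon\to0$ along such partitions produces sets on which $\lambda_n$ equals $1$ while $\abs\nu$ is arbitrarily small, forcing $(\abs\nu\wedge\lambda_n)(\Omega)=0$. Hence $\abs\nu\wedge\abs{\lambda_d}=0$, which is exactly orthogonality of $\nu$ and $\lambda_d$.

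For uniqueness, suppose $\lambda=\lambda_c'+\lambda_d'$ is another such decomposition. Then $\lambda_c-\lambda_c'=\lambda_d'-\lambda_d$. The left side is strongly continuous (the strongly continuous elements of $ba(\A)$ form a band, in particular a linear subspace closed under the lattice operations — this follows from the defining partition property, or from \cite{rao}). The right side is orthogonal to every strongly continuous measure, since $\lambda_d$ and $\lambda_d'$ are; in particular it is orthogonal to itself, hence $\abs{\lambda_d'-\lambda_d}\wedge\abs{\lambda_d'-\lambda_d}=0$, so $\lambda_d=\lambda_d'$ and therefore $\lambda_c=\lambda_c'$. The final assertion, that $\lambda_c=\lambda_0$, is immediate from the construction since we set $\lambda_c=\lambda_0$; alternatively, one notes that any decomposition of the stated type must agree with the one built from \eqref{sob}, by the uniqueness just proved together with the uniqueness clause in Sobczyk–Hammer.

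The main obstacle is the orthogonality step: verifying rigorously that $\abs\nu\wedge\lambda_d=0$ for strongly continuous $\nu$, which reduces (via norm-continuity of $\wedge$ and the absolute convergence of the series) to the single-component statement that a two-valued measure is singular with respect to every strongly continuous one. Everything else — convergence of the series defining $\lambda_d$, the additivity $\lambda=\lambda_c+\lambda_d$, and the uniqueness argument — is routine manipulation with the variation norm and the lattice structure of $ba(\A)$, and can be cited from \cite{rao}.
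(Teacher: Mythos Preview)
Your proof is correct but follows a different route from the paper's. The paper proceeds abstractly: it verifies that the strongly continuous elements of $ba(\A)$ form a band (a vector sublattice that is solid and closed under order limits of increasing nets), and then invokes the Bochner--Phillips decomposition to obtain existence and uniqueness of \eqref{cts} in one stroke. Only afterwards does it appeal to Sobczyk--Hammer, applying \eqref{sob} to $\lambda_d$ and using orthogonality plus the uniqueness clause there to identify $\lambda_c$ with $\lambda_0$. Your approach reverses this order: you start from the explicit Sobczyk--Hammer decomposition, \emph{define} $\lambda_c=\lambda_0$ and $\lambda_d=\sum_n a_n\lambda_n$, and then verify the orthogonality of $\lambda_d$ to strongly continuous measures by reducing to the single-component fact that a $\{0,1\}$-valued measure is singular to every strongly continuous one. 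Uniqueness you handle by the standard ``equal to something orthogonal to itself'' argument.

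Both arguments are sound. The paper's route is more structural and makes the band property of strongly continuous measures explicit, which is conceptually useful elsewhere; your route is more concrete and avoids the Bochner--Phillips machinery entirely, at the cost of leaning harder on the specific form of \eqref{sob}. One minor point: your passage from $\abs\nu\wedge\lambda_n=0$ for all $n$ to $\abs\nu\wedge\abs{\lambda_d}=0$ is cleanest phrased as ``the disjoint complement of $\abs\nu$ is a band in $ba(\A)$, hence norm-closed'', which is exactly what your invocation of norm-continuity of $\wedge$ amounts to.
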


\begin{proof}
The decomposition follows from that of Bochner and Phillips. Strongly continuous
elements of $ba(\A)$ form in fact a vector sublattice of $ba(\A)$ by 
\cite[Proposition 5.1.8]{rao}. If $m,\mu\in ba(\A)$ are such that $m$ is strongly 
continuous, and thus $\abs m$, and $\abs\mu\le\abs m$ then it is obvious from 
the definition that $\mu$ is strongly continuous too. If $\neta m$ is an increasing net 
of strongly continuous elements of $ba(\A)$ and if $m=\lim_\alpha m_\alpha\in ba(\A)$, 
then fix $\varepsilon$ and find $\alpha\in\mathfrak A$ sufficiently large so that 
$\norm{m-m_\alpha}=(m-m_\alpha)(\Omega)<\varepsilon/2$ and, by the 
properties of $m_a$, a partition $\{A_1,\ldots,A_N\}\subset\A$ such that 
$\sup_n\abs{m_a}(A_n)<\varepsilon/2$. But then, 
$\sup_n\abs{m}(A_n)<\varepsilon/2+\sup_n\abs{m_a}(A_n)<\varepsilon$. This proves
existence and uniqueness of \eqref{cts}. Applying \eqref{sob} to $\lambda_d$ 
we conclude, by orthogonality, that $\lambda_d$ is necessarily of the form 
$\sum_nb_n\lambda_{d,n}$ with the $\lambda_{d,n}$'s 0-1 valued and distinct and 
with $\sum_n\abs{b_n}<\infty$. But then, the claim follows from uniqueness of the 
decomposition \eqref{sob}.
\end{proof}

\section{Bounded Subsets of $L^0(\lambda)$}
\label{sec bdd}

In this section we provide conditions under which bounded subsets of $L^0(\lambda)$ 
are bounded in $L^1$ under a change of the given measure. The technique of changing 
the underlying probability measure is rather popular in stochastic analysis, e.g. in the study 
of semimartingale topologies, see \cite{memin}. It is also widely used in mathematical 
finance where the new probability measure is referred to as the \textit{risk-neutral}
measure, see e.g. \cite{harrison kreps} or \cite{delbaen schachermayer}.

\begin{lemma}
\label{lemma bdd}
Let $\K\subset L^1(\lambda)$ be convex with $\0\in\K$. If $\K$ is bounded in 
$L^0(\lambda)$ then there exists $\mu\in\Prob_*(\lambda)$ such that 
$\K\subset L^1(\mu)$ and $\sup_{k\in\K}\int kd\mu<\infty$.
\end{lemma}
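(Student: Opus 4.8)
The plan is to produce the measure $\mu$ by a Hahn--Banach / separation argument applied to a suitable convex set of measurable functions, and then to upgrade the abstract functional so obtained to an honest integral against a finitely additive set function using the Riesz-type representation of Theorem \ref{th riesz}. Since $\K$ is bounded in $L^0(\lambda)$, there is for each $n$ a constant $c_n$ with $\sup_{k\in\K}\abs\lambda^*(\abs k>c_n)\le 2^{-n}$; the idea is to truncate. First I would consider, for each $n$, the set $\K_n=\{k\wedge c_n:k\in\K\}$, or more precisely the convex hull of $\bigcup_n 2^{-n}\K_n$ (rescaled so that the pieces sum up), which is a convex, $L^0(\lambda)$-bounded subset sitting inside $L^\infty$-like pieces. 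The key point is to find a single continuous positive linear functional $\phi$ on $L^0(\lambda)$ that is bounded above on $\K$; by Theorem \ref{th riesz} such a $\phi$ is represented by some $\mu\in ba_0(\lambda)_+$, and boundedness of $\phi$ on $\K$ gives $\sup_{k\in\K}\int k\,d\mu<\infty$ directly.

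The construction of $\phi$ is the heart of the matter, and I expect it to proceed by an exhaustion argument producing a countable sum of elementary functionals. At stage $n$ one uses that $\K$, being $L^0(\lambda)$-bounded, ``lives'' up to mass $2^{-n}$ on a set where it is essentially bounded by $c_n$; on the complementary exceptional part one needs a set function that is small. Concretely, I would build inductively measures $\mu_n\in ba_0(\lambda)_+$ with $\norm{\mu_n}$ summable, each controlling the contribution of $\K$ at ``level $n$'', and set $\mu=\sum_n \mu_n$. One must check (i) $\mu\in ba_0(\lambda)$ — here is where the hypothesis enters crucially, via Lemma \ref{lemma ba0} and the fact, used in the proof of Theorem \ref{th riesz}, that $ba_0(\lambda)$ behaves well under the relevant operations, though a countable sum of $ba_0$ measures need not be $ba_0$, so one must instead argue that the \emph{functional} $\phi=\sum_n\phi_n$ is continuous on $L^0(\lambda)$ and invoke Theorem \ref{th riesz} to get $\mu\in ba_0(\lambda)$ a posteriori; (ii) every $k\in\K$ is $\mu$-integrable; and (iii) $\sup_{k\in\K}\mu(k)<\infty$. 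For the normalization to land in $\Prob_*(\lambda)$ rather than merely $ba_0(\lambda)_+$, one adds a small multiple of a fixed $\mu_*\in\Prob_*(\lambda)$ (such a measure exists because $L^1(\lambda)\subset L^0(\lambda)$ and one can take e.g. a suitably rescaled $\abs\lambda$-type control, using $ba_*(\lambda)\ne\emptyset$), and then rescales to a probability; since $\mu_*$ is equivalent to $\abs\lambda$ this perturbation fixes the null sets, giving $\mu\in\Prob_*(\lambda)$ without destroying $\sup_{k\in\K}\mu(k)<\infty$ (using $\0\in\K$ and convexity so that $\mu(k)\ge$ a finite lower bound, or rather one only needs the supremum finite).

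The main obstacle, as I see it, is arranging the inductive construction so that the partial sums $\phi_1+\cdots+\phi_n$ converge to a \emph{continuous} functional on $L^0(\lambda)$ while simultaneously dominating $\K$: continuity forces the ``masses'' $\norm{\mu_n}$ to be controlled, but dominating the potentially large values $c_n$ of $k\in\K$ on the exceptional sets pushes in the opposite direction. The resolution is that on the exceptional set of $\K$ at level $n$ one only needs $\mu_n$ to have mass of order $c_n^{-1}2^{-n}$ (so that $\mu_n$-mass times the bound $c_n$ is summable), which is compatible with continuity; making this trade-off precise, and checking that the resulting $\mu$ still dominates $\K$ uniformly from above rather than just each fixed $k$, is the delicate bookkeeping. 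A secondary, more routine point is verifying $L^0(\lambda)\subset L^1(\mu)$-type integrability statements and that truncations $k\wedge c_n \to k$ appropriately, which follows the same pattern as in the proof of Theorem \ref{th riesz}.
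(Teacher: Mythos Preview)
Your plan has a structural gap. You want to produce a nonzero continuous linear functional $\phi$ on $L^0(\lambda)$ and then invoke Theorem \ref{th riesz} to represent it as integration against some $\mu\in ba_0(\lambda)_+$. But as the paper itself shows (the theorem immediately following the Mukherjee--Summers corollary), $ba_0(\lambda)=\{0\}$ whenever $\lambda$ is strongly continuous---for instance whenever $\lambda$ is a nonatomic countably additive measure. In that case the only continuous linear functional on $L^0(\lambda)$ is zero, and your construction collapses. Even when $ba_0(\lambda)\ne\{0\}$, any $\mu\in ba_0(\lambda)$ equivalent to $\lambda$ would force $\lambda\in ba_0(\A)$ (Corollary \ref{corollary positive}), so you cannot land in $\Prob_*(\lambda)$ this way. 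Your proposed fix, adding a small multiple of some $\mu_*\in\Prob_*(\lambda)$, does not work either: $\K$ is only assumed to lie in $L^1(\lambda)$, not to be \emph{bounded} there, so $\sup_{k\in\K}\int k\,d\mu_*$ may well be infinite and the perturbation destroys the uniform bound you just obtained.

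The paper's argument avoids $L^0(\lambda)^*$ altogether and separates in $L^1(\lambda)$, whose dual is rich regardless of $\lambda$. For each $A\in\A$ with $\abs\lambda(A)>0$ one shows, using $L^0(\lambda)$-boundedness of $\K$, that for large $x$ the function $2x\set A$ lies outside the $L^1(\lambda)$-closure of $\K-\Sim(\A)_+$; Hahn--Banach in $L^1(\lambda)$ then yields a positive $\mu_A\le d_A\abs\lambda$ with $\mu_A(A)>0$ and $\sup_{k\in\K}\mu_A(k)<\infty$. Equivalence with $\lambda$ is obtained not by perturbation but by exhaustion: a finitely additive Halmos--Savage theorem extracts a countable subfamily $\{\mu_{A_n}\}$ whose weighted sum $\bar\mu=\sum_n 2^{-n}\mu_{A_n}$ vanishes exactly on the $\abs\lambda$-null sets, while the normalizations $d_{A_n}\le1$, $c_{A_n}\le1$ keep both $\bar\mu\le\abs\lambda$ and $\sup_{k\in\K}\bar\mu(k)\le 1$.
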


\begin{proof}
Let $\C=\K-\Sim(\A)_+$, pick $A\in\A$ such that $\abs\lambda(A)>0$ 
and fix $x>0$. Suppose that $2x\set A\in\cls\C1\lambda$. For each 
$n\in\N$ there exist then $k_n\in\mathcal K$ and $h_n\in \mathcal C$ such 
that $k_n\ge h_n$ and $\abs\lambda(\abs{h_n-2x\set A})<2^{-n}$. Thus,
\begin{equation}
\label{l}
\begin{split}
\abs\lambda^*(k_n>x)
&\ge
\abs\lambda^*(h_n>x)\\
&\ge
\abs\lambda^*(\abs{h_n-2x}<x)\\
&\ge
\abs\lambda(A)-\abs\lambda^*(A\cap\{\abs{h_n-2x}\ge x\})\\
&\ge
\abs\lambda(A)-\abs\lambda^*(\abs{h_n-2x\set A}\ge x)\\
&\ge
\abs\lambda(A)-x^{-1}\abs\lambda(\abs{h_n-2x\set A})\\
&\ge
\abs\lambda(A)-x^{-1}2^{-n}
\end{split}
\end{equation}
i.e. $\sup_{k\in\K}\abs\lambda^*(\abs k>x)\ge\abs\lambda(A)$. Thus, for $x$ 
sufficiently high, $2x\set A\notin\cls\C1\lambda$. By ordinary separation
theorems there exists a continuous linear functional $\phi$ on $L^1(\lambda)$
such that $\sup_{h\in\C}\phi(h)<c_A<\phi(2x\set A)$ for some 
$c_A>0$. By the inclusion $-\Sim(\A)_+\subset\C$, the functional 
$\phi$ is positive and, by \cite[Theorem 2]{JMAA}, it admits the representation as 
an integral with respect to some $\mu_A\in ba(\lambda)_+$ such that $\mu_A(A)>0$. 
Moreover, since $\phi$ is bounded on any bounded subset of $L^1(\lambda)$
there exists $d_A>0$ such that $\mu\le d_A\abs\lambda$. By normalization we 
may assume $d_A\le1$ and $c_A\le1$. By a finitely additive version of Halmos-Savage 
theorem \cite[Theorem 6]{lebesgue}, the corresponding collection 
$\{\mu_A:A\in\A,\abs\lambda(A)>0\}$ contains a countable subcollection 
$\{\mu_{A_n}:n\in\N\}$ such that, letting $\bar\mu=\sum_n2^{-n}\mu_{A_n}$, then 
$\bar\mu\gg\mu_A$ for all $A\in\A$ with $\abs\lambda(A)>0$ and, therefore, that 
$\bar\mu(A)=0$ if and only if $\abs\lambda(A)=0$. Moreover $\bar\mu\le\abs\lambda$ 
and if $k\in\K$ then $\bar\mu(k)=\sum_n2^{-n}\mu_{A_n}(k)\le1$. It is then enough
to put $\mu=\bar\mu/\norm{\bar\mu}$.
\end{proof}

\begin{remark}
The proof of Lemma \ref{lemma bdd} may be adapted to the case in which $\lambda$
is real valued and additive but not necessarily bounded provided that
each $A\in\A$ with $\abs\lambda(A)=\infty$ admits some $B\in\A$ such that
$B\subset A$ and $\abs\lambda(B)<\infty$. To see this, it is enough to rewrite
the proof upon choosing $A\in\A$ such that $0<\abs\lambda(A)<\infty$. One easily
sees that \eqref{l} still holds as well as the separation argument invoked. We
would obtain a collection $\{\mu_A:A\in\A,0<\abs\lambda(A)<\infty\}$ and, from it,
$\mu=\sum_n2^{-n}\mu_{A_n}$. Then $\mu\ll\lambda$ while $B\in\A$ and $\mu(B)=0$
imply $\lambda(B\cap A)=0$ for all $A\in\A$ with $0<\abs\lambda(A)<\infty$, i.e.
$\abs\lambda(B)=0$.
\end{remark}

Let us remark that Lemma \ref{lemma bdd} requires that $\K$ is convex. This 
assumption is necessary due to the important fact that the convex hull of a bounded 
subset of $L^0(\lambda)$ need not be itself bounded. This is a crucial remark as it 
implies that, generally speaking, the topology of convergence in measure fails to be 
locally convex -- and actually not even linear. This implication makes some useful tools, 
such as separation theorems, simply unavailable.

The next example considers that case of an unbounded set function.

\begin{example}
\label{ex co}
Let $\Omega=\N\times\R_+$, fix $f:[0,1]\to\R_+$ with $\sup_{0\le x\le1}f(x)=\infty$
and $\inf_{0\le x\le1}f(x)=1$ and define $f_n:\Omega\to\R_+$ by letting 
$f_k(n,x)=f(x)$ if $n=k$ or else $0$. Clearly, $\{f_n>c\}=\{n\}\times\{f>c\}$.
Define also 
\begin{equation*}
\A_0
=
\left\{\bigcup_{i=1}^I\{f_{n_i}\ge c_i\}:
n_1,\ldots,n_I\in\N,\ 
c_1,\ldots,c_I\in\R_+,\
I\in\N\right\}
\end{equation*}
and 
\begin{equation*}
m\left(\bigcup_{i=1}^I\{f_{n_i}\ge c_i\}\right)
=
\sum_{i=1}^I(1\vee c_i)^{-1/2}
\quad\text{and}\quad
m(\emp)=0
\end{equation*}
Each pair $A^1,A^2\in\A_0$ admits the representation
$A^j=\bigcup_{i=1}^I\{n_i\}\times\{f\ge c_i(j)\}$ for $j=1,2$,
where $c_1(j),\ldots,c_I(j)\in\R_+\cup\{\infty\}$. It is then easy
to see that
\begin{align*}
m(A^1\cup A^2)+m(A^1\cap A^2)
&=
\sum_{i=1}^I(1\vee(c_i(1)\wedge c_i(2)))^{-1/2}
+\sum_{i=1}^I(1\vee(c_i(1)\vee c_i(2)))^{-1/2}\\
&=
\sum_{i=1}^I(1\vee c_i(1))^{-1/2}+\sum_{i=1}^I(1\vee c_i(2))^{-1/2}\\
&=
m(A^1)+m(A^2)
\end{align*}
Thus, by \cite[Theorems 3.1.6 and 3.2.5]{rao}, $m$ admits an extension 
(still denoted by $m$) as an additive set function on the algebra 
$\A$ generated by $\A_0$. The set $\K=\{f_n:n\in\N\}$ is clearly bounded in 
$L^0(m)$ as  $m(f_n>c)=(1\vee c)^{-1/2}$. However, since the $f_n$'s 
have disjoint support then if $c>1$
\begin{align*}
m\left(\frac 1N\sum_{n=1}^Nf_n\ge c\right)
=
\sum_{n=1}^Nm\left(f_n>cN\right)
=
\frac{1}{\sqrt{c}}\sum_{n=1}^N\frac{1}{\sqrt N}
=
\sqrt {N/c}
\end{align*}
so that $\co(\K)$ is not bounded in $L^0(m)$.
\end{example}

For the case of a bounded additive set function we have a general result that
relates some important topological properties of $L^0(\lambda)$ with the
measure theoretic properties of $\lambda$ (recall the definition of a strongly 
discontinuous set function given in Lemma \ref{lemma cts}).

\begin{theorem}
\label{th co}
The following properties are equivalent: (i) if $\K$ is bounded in $L^0(\lambda)$ then
so is $\co(\K)$, (ii) $\lambda$ is strongly discontinuous, (iii) $L^0(\lambda)$ is a locally 
convex topological vector space.
\end{theorem}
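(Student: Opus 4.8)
The plan is to prove the cycle of implications (ii)$\Rightarrow$(iii)$\Rightarrow$(i)$\Rightarrow$(ii). The easiest step is (iii)$\Rightarrow$(i): in any topological vector space the convex hull of a bounded set is bounded, since given a convex balanced neighborhood $V$ of the origin, if $\K\subset tV$ for some scalar $t$ then, by convexity of $V$, also $\co(\K)\subset tV$; as $L^0(\lambda)$ is always metrizable, its TVS-boundedness coincides with the notion of boundedness used in the paper, so nothing further is needed.

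For (ii)$\Rightarrow$(iii) I would use the Sobczyk–Hammer decomposition \eqref{sob}: since $\lambda$ is strongly discontinuous, by Lemma \ref{lemma cts} we have $\lambda=\lambda_d=\sum_n a_n\lambda_n$ with the $\lambda_n$ distinct $\{0,1\}$-valued and $\sum_n|a_n|<\infty$. Each $\lambda_n$, being $\{0,1\}$-valued, makes $L^0(\lambda_n)$ essentially one-dimensional up to $\lambda_n$-null functions — more precisely, $f\mapsto\lambda_n^*$-essential value of $f$ identifies convergence in $\lambda_n$-measure with a genuine seminorm. One then checks that $\abs\lambda^*(\abs f>c)$ is controlled by a countable family of such seminorms (weighted by the $a_n$), so that the topology of convergence in $\lambda$-measure is generated by a countable family of seminorms, hence locally convex; completeness/metrizability are already known, and linearity of the operations is straightforward once the topology is seminorm-generated. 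The technical point to be careful about is passing from the individual $\lambda_n$ to $\abs\lambda$: one must show $\abs\lambda^*(\abs f>c)\le\sum_n \epsilon_n$ can be forced small by making finitely many $\lambda_n^*(\abs f>c_n)$ small, using $\sum_n|a_n|<\infty$ to truncate the tail.

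The real content is (i)$\Rightarrow$(ii), which I would prove by contraposition: assume $\lambda$ is \emph{not} strongly discontinuous, so by Lemma \ref{lemma cts} the strongly continuous part $\lambda_c=\lambda_0\ne0$, and produce a bounded set whose convex hull is unbounded, in the spirit of Example \ref{ex co}. Replacing $\lambda$ by $\abs{\lambda_c}$ we may assume $\lambda\ge0$ is strongly continuous and nonzero. Strong continuity lets us, for each $N$, partition $\Omega$ into $A_1^N,\dots,A_{m_N}^N\in\A$ with $\lambda(A_i^N)$ uniformly small; by refining and relabelling one extracts a sequence of disjoint sets $B_1,B_2,\dots\in\A$ with $0<\lambda(B_n)\to 0$ fast, say $\lambda(B_n)\le 4^{-n}$ while $\sum_n\sqrt{\lambda(B_n)}=\infty$ can be arranged — here one uses that strong continuity gives arbitrarily fine partitions, so one can always find, inside any positive-measure set, a further positive-measure set of prescribed small size. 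Setting $f_n=\lambda(B_n)^{-1/2}\sset{B_n}$, one has $\abs\lambda^*(f_n>c)\le\lambda(B_n)^{1/2}c^{-1}\to 0$ uniformly in the right way, so $\K=\{f_n\}$ is bounded in $L^0(\lambda)$; but because the $B_n$ are disjoint, $\frac1N\sum_{n=1}^N f_n\ge c$ exactly on $\bigcup_{n\le N}\{f_n\ge cN\}$, which — choosing the sizes so the relevant sum of $\sqrt{\lambda(B_n)}$ diverges — forces $\abs\lambda^*\big(\frac1N\sum_{n=1}^N f_n>c\big)$ bounded away from $0$ for all $N$, so $\co(\K)$ is unbounded. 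The main obstacle is engineering the sizes $\lambda(B_n)$: one needs simultaneously $\lambda(B_n)\to0$ (for boundedness of $\K$) and a divergence condition on a rescaled sum (for unboundedness of $\co(\K)$), and one must verify the disjoint sets with these sizes actually exist — this is exactly where strong continuity (as opposed to mere absence of atoms, which fails finitely additively) is essential, and the verification should be done carefully, perhaps grouping the $B_n$ in blocks as in Example \ref{ex co}.
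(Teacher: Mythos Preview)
Your (iii)$\Rightarrow$(i) is fine and coincides with the paper's argument. Your (ii)$\Rightarrow$(iii) via a countable family of seminorms is a different route from the paper --- there (ii)$\Rightarrow$(i) is proved directly (showing $\abs\lambda^*=\sum_na_n\lambda_n^*$ and using that each $\lambda_n$ is $\{0,1\}$-valued to bound $\co(\K)$), and (iii) is then obtained from (i) by taking convex hulls of the metric balls $\K_\varepsilon=\{f:\int\abs f\wedge1\,d\abs\lambda<\varepsilon\}$. Your seminorm idea is plausible, but be aware you must check that the two topologies coincide, not merely that one refines the other.

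The substantive gap is in (i)$\Rightarrow$(ii). A family of scaled indicators with pairwise \emph{disjoint} supports in a \emph{bounded} measure can never furnish the counterexample. If $f_n=c_n\set{B_n}$ with the $B_n\in\A$ disjoint and each $c_n<\infty$, then for any $g=\sum_n\alpha_nf_n\in\co(\K)$ one has
\[
\abs\lambda(g>c)=\sum_{n:\,\alpha_nc_n>c}\abs\lambda(B_n)\le\sum_{n:\,c_n>c}\abs\lambda(B_n),
\]
and the right side tends to $0$ as $c\to\infty$: disjointness gives $\sum_n\abs\lambda(B_n)\le\norm\lambda<\infty$, while $\{n:c_n>c\}\downarrow\emptyset$. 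Hence $\co(\K)$ is automatically bounded, no matter how you tune the sizes $\lambda(B_n)$; in particular your divergence condition on $\sum_n\sqrt{\lambda(B_n)}$ cannot help. Example~\ref{ex co} is not a template here --- the set function $m$ there is \emph{unbounded}, which is precisely what lets $\sqrt{N/c}$ diverge. The paper's construction is essentially different: for each $k$ it takes an entire $\A$-partition $\pi(k)$ of a set $E_k$ (chosen with $\abs\lambda_d(E_k)$ and $\abs\lambda_c(E_k^c)$ small) into pieces of $\abs\lambda$-measure $<2^{-k}$, enumerates all pieces over all $k$ as $A(n)$, and sets $f_n=\abs{\pi(k_n)}^{p+1}\set{A(n)}$. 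The $A(n)$ from different levels overlap, and the uniform convex combination over a single level $k$ yields $\abs{\pi(k)}^p\set{E_k}$, large on a set of measure bounded below by roughly $\abs{\lambda_c}(\Omega)>0$. Note also that your reduction ``replace $\lambda$ by $\abs{\lambda_c}$'' is not innocent: you must keep $\K$ bounded in $L^0(\lambda)$, not just in $L^0(\lambda_c)$, which requires controlling $\abs{\lambda_d}$ on your sets --- this is exactly why the paper restricts to the $E_k$.
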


\begin{proof}
(\textit{i})$\Rightarrow$(\textit{ii}). Let $\abs\lambda$ have a strongly continuous part, 
$\abs\lambda_c$. By orthogonality, fix a sequence $\seq E k$ of sets in $\A$ with 
$\abs{\lambda}_c(E_k^c)+\abs{\lambda}_d(E_k)<2^{-k-1}$
and, for each $k$, let $\pi(k)$ be a finite $\A$ partition of $E_k$ such that
$\abs{\pi(k)}>\abs{\pi(k-1)}$ and $\sup_{A\in\pi(k)}\abs{\lambda}_c(A)<2^{-k-1}$
(see \cite[5.2.4]{rao}). Then, $\sup_{A\in\pi(k)}\abs{\lambda}(A)<2^{-k}$.
Define $J(r)=\sum_{k=1}^r\abs{\pi(k)}$ and $k_n=\inf\{k:J(k)>n\}$. Write each 
$\pi(k)$ as $\{A_k^i:i=1,\ldots,I_k\}$ and for each $n\in\N$ define $A(n)=A^i_{k_n}$ 
with $i=n-J(k_n-1)$. It is then clear that $\{A(n):n\in\N\}$ is an enumeration of 
$\{A_k^i:i=1,\ldots,I_k,\ k\in\N\}$. Define $f_n=\abs{\pi(k_n)}^{p+1}\set{A(n)}$, 
with $p>0$. Observe that $\lim_n\abs\lambda(A(n))=0$, as $A(n)\in\pi(k_n)$,
and that $\lim_n\abs{\pi(k_n)}=\infty$. If $n_0$ is large enough so that 
$\sup_{m>n_0}\abs\lambda(A(m))<\varepsilon$ and 
$c>\sup_{i\le n_0}\abs{\pi(k_i)}^{p+1}$, then $\sup_n\abs\lambda(f_n>c)<\varepsilon$.
Thus, $\K=\{f_n:n\in\N\}$ is bounded in $L^0(\lambda)$. However,
\begin{align}
\label{f(pi)}
\abs{\pi(k_n)}^p\set{E_{k_n}}
=
\sum_{A\in\pi(k(n))}\abs{\pi(k_n)}^p\set A
=
\sum_{i=1+J(k_n-1)}^{J(k_n)}\frac{f_i}{J(k_n)-J(k_n-1)}
\in
\co(\K)
\end{align}
so that $\co(\K)$ is not bounded in $L^0(\lambda)$. 

\noindent(\textit{ii})$\Rightarrow$(\textit{i}). Let $\lambda$ be 
strongly discontinuous, i.e. (by Lemma \ref{lemma cts}) let $\abs\lambda$ be of the 
form $\abs\lambda=\sum_{n\ge1}a_n\lambda_n$ with the $\lambda_n$'s being 
$\{0,1\}$-valued and distinct, $a_n>0$ and $\sum_{n\ge1}a_n<\infty$. Observe that
\begin{align*}
\abs\lambda^*(B)
=
\inf_{\{A\in\A:B\subset A\}}\abs\lambda(A)
=
\inf_{\{A\in\A:B\subset A\}}\sum_{n\ge1}a_n\lambda_n(A)
\ge
\sum_{n\ge1}a_n\lambda^*_n(B)
\end{align*}
On the other hand, for each $N\in\N$ there exists a finite partition
$\{F_1,\ldots,F_N\}\subset\A$ such that $\lambda_n(F_n)=1$ for
$n=1,\ldots,N$, \cite[Proposition 5.2.2]{rao}. Thus if we choose $N$ such that 
$\sum_{n>N}a_n<\varepsilon$, and if $B\subset A_n$ and $A_n\in\A$ we have
$B\subset\bigcup_{n=1}^NA_n\cap F_n$ and so
\begin{align}
\label{*}
\sum_{n\le N}a_n\lambda_n(A_n)
=
\sum_{n\le N}a_n\lambda_n\left(\bigcup_{n=1}^NA_n\cap F_n\right)
\ge
\abs\lambda\left(\bigcup_{n=1}^NA_n\cap F_n\right)-\varepsilon
\ge\abs\lambda^*(B)-\varepsilon
\end{align}
Therefore, $\abs\lambda^*=\sum_na_n\lambda_n^*$.
Take $\K$ to be
bounded in $L^0(\lambda)$, and thus in $L^0(\lambda_n)$ for each
$n\in\N$. Given that each $\lambda_n$ is purely atomic and that 
$\lambda_n\ll\abs\lambda$, there exists $c_n>0$ sufficiently high so that
$\sup_{f\in\K}\lambda_n^*(f>c)=0$ whenever $c>c_n$. Take 
$\sum_{i=1}^Ib_if_i\in\co(\K)$ with $f_1,\ldots,f_I\in\K$, $b_1,\ldots,b_I\ge0$
and $\sum_{i=1}^Ib_i=1$. Then, when $c>c_n$ we have
\begin{align*}
\lambda_n^*\left(\sum_{i=1}^Ib_if_i>c\right)
\le
\lambda_n^*\left(\bigcup_{i=1}^I\{f_i>c\}\right)
\le
\sum_{i=1}^I\lambda_n^*\left(f_i>c\right)
=0
\end{align*}
If $N$ is such that $\sum_{n>N}a_n<\varepsilon$ and $c>\sup_{n\le N}c_n$
then from \eqref{*} we conclude
\begin{align*}
\lambda^*\left(\sum_{i=1}^Ib_if_i>c\right)
=
\sum_{n>1}a_n\lambda_n^*\left(\sum_{i=1}^Ib_if_i>c\right)
=
\sum_{n>N}a_n\lambda_n^*\left(\sum_{i=1}^Ib_if_i>c\right)
<
\varepsilon
\end{align*}
so that $\co(\K)$ is bounded in $L^0(\lambda)$.

\noindent(\textit{ii})$\Rightarrow$(\textit{iii}). 
Let $\K_\varepsilon=\{f\in L^0(\lambda):\int\abs f\wedge1d\lambda<\varepsilon\}$.
Under (\textit{ii}) the collection $\{\co(\K_\varepsilon):\varepsilon>0\}$ forms a  
base of absolutely convex, absorbing sets at the origin. Its translates constitute then 
a base for a corresponding locally convex, linear topology. Since 
$\K_\varepsilon\subset\co(\K_\varepsilon)$, this topology is weaker than the topology 
of $\lambda$-convergence. However, given that $\co(\K_\varepsilon)$ is $L^0(\lambda)$ 
bounded, the converse is also true. 

\noindent(\textit{iii})$\Rightarrow$(\textit{i}). Let  $L^0(\lambda)$ be a locally
convex topological vector space and let $\K$ be bounded in $L^0(\lambda)$. There 
will then be a convex open set $C$ around the origin and $\kappa>0$ such that 
$\K\subset\kappa C$ and thus that $\co(\K)\subset\kappa C$ so that $\co(\K)$ is 
bounded in $L^0(\lambda)$.
\end{proof}

For convenience of future reference, let us introduce the class
\begin{equation}
\label{P(K)}
\Prob_*(\lambda;\K)
=
\left\{\mu\in\Prob_*(\lambda):\K\text{ is bounded in }L^1(\mu)\right\}
\end{equation}
The most important consequence of the preceding Lemma \ref{lemma bdd} is the 
following:

\begin{theorem}
\label{th bdd}
Let $\K\subset L^0(\lambda)$ be convex and admit a $\lambda$-a.s. lower bound.
If $\K$ is bounded in $L^0(\lambda)$ then $\Prob_*(\lambda;\K)$ 
is non empty. 
\end{theorem}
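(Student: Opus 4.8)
The statement to be proved is Theorem~\ref{th bdd}: if $\K\subset L^0(\lambda)$ is convex, admits a $\lambda$-a.s.\ lower bound, and is bounded in $L^0(\lambda)$, then $\Prob_*(\lambda;\K)$ is non-empty. The obvious strategy is to reduce to Lemma~\ref{lemma bdd}, which gives exactly this conclusion under the stronger hypothesis $\K\subset L^1(\lambda)$ together with $\0\in\K$. So the plan has three moving parts: (a) normalize so that $\0\in\K$; (b) replace the lower bound by $\0$, i.e.\ shift $\K$ so that it sits in $L^0(\lambda)_+$; (c) cut $\K$ down to something that lies in $L^1(\lambda)$, apply Lemma~\ref{lemma bdd} to the truncation, and then argue that the $\mu$ produced also controls the original (untruncated) $\K$.

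First I would handle (a) and (b) together. Fix $k_0\in\K$ and a $\lambda$-a.s.\ lower bound $f$ of $\K$; by definition $\abs\lambda^*(k<f-c)=0$ for every $c>0$ and $k\in\K$, and since $\K$ is $L^0(\lambda)$-bounded and $k_0\in L^0(\lambda)$, after subtracting $k_0$ we may as well assume $\0\in\K$ (boundedness, convexity and the existence of an a.s.\ lower bound are all translation-stable, the bound becoming $f-k_0$). Now set $\K'=\{k-(f\wedge 0)+1 : k\in\K\}$ — more carefully, truncate the lower bound: replacing $f$ by $g=(f\wedge c_0)\wedge 0$ for a suitable constant does not affect its being an a.s.\ lower bound on the relevant region, and since every $k\in\K$ satisfies $k\ge f-c$ $\lambda$-a.s., we get that the shifted set $\K-g$ consists of functions that are $\ge -1$ $\lambda$-a.s., hence (modulo a $\lambda$-null modification, which changes nothing in $L^0(\lambda)$) lie in $L^0(\lambda)_+$ after a further unit shift. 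The upshot of this bookkeeping: without loss of generality $\K\subset L^0(\lambda)_+$, $\K$ is convex and $L^0(\lambda)$-bounded, and $\0$ is within bounded distance of $\K$ — and it suffices to produce $\mu\in\Prob_*(\lambda)$ with $\K$ bounded in $L^1(\mu)$.

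Next, part (c). For $n\in\N$ put $\K_n=\{k\wedge n : k\in\K\}\cup\{\0\}$. Each $\K_n$ is convex (the map $k\mapsto k\wedge n$ is concave but $\K_n$ is still convex because... this needs a small argument: actually take $\K_n=\co(\{k\wedge n:k\in\K\}\cup\{\0\})$ to be safe), contains $\0$, is contained in $L^\infty(\lambda)\subset L^1(\lambda)$, and — using \eqref{tcheby} and the fact that $\{k\wedge n>c\}\subset\{k>c\}$ — is bounded in $L^0(\lambda)$ uniformly in $n$, with the same bounding function as $\K$. Lemma~\ref{lemma bdd} applied to $\K_1$ already yields some $\mu\in\Prob_*(\lambda)$ with $\sup_{k\in\K}\int(k\wedge 1)\,d\mu<\infty$; the point is to upgrade this to $\sup_{k\in\K}\int k\,d\mu<\infty$. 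Here I expect the main obstacle: a single application of Lemma~\ref{lemma bdd} to one truncation $\K_n$ controls only $k\wedge n$, and letting $n\to\infty$ naively changes the measure. The fix is a diagonal/weighting construction: apply Lemma~\ref{lemma bdd} to each $\K_n$ to get $\mu_n\in\Prob_*(\lambda)$ with $\mu_n\le\abs\lambda/\norm{\abs\lambda}$ (the normalization built into the proof of Lemma~\ref{lemma bdd}) and $\sup_{k\in\K}\int(k\wedge n)\,d\mu_n\le 1$, then set $\mu=\sum_n 2^{-n}\mu_n$. Then $\mu\in\Prob_*(\lambda)$ (it is $\ll\abs\lambda$ and, since each $\mu_n$ is, $\abs\lambda\ll\mu_n\ll\mu$ fails in general — so instead use that each $\mu_n$ from Lemma~\ref{lemma bdd} already satisfies $\mu_n(A)=0\iff\abs\lambda(A)=0$, hence so does $\mu$, giving $\mu\in\Prob_*(\lambda)$).

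Finally, the integrability estimate. For $k\in\K$ and any $n$, $\int k\,d\mu=\sum_m 2^{-m}\int k\,d\mu_m\ge 2^{-n}\int k\,d\mu_n\ge 2^{-n}\int(k\wedge n)\,d\mu_n$, which is the wrong direction. So the weighting must be arranged the other way: I would instead fix attention on showing $\sup_{k\in\K}\int k\,d\mu<\infty$ directly by a single clever choice. The clean route: apply Lemma~\ref{lemma bdd} not to $\K$ but to $\C=\co(\bigcup_n \tfrac1{n}\K_n \text{-type sets})$... On reflection, the genuinely correct and simplest device is: since $\K\subset L^0(\lambda)_+$ is $L^0(\lambda)$-bounded and convex, pick $\delta>0$ with $\abs\lambda^*(k>\delta^{-1})$ small, but more usefully, observe that by Lemma~\ref{lemma bdd} applied to $\K_n$ we get $\mu^{(n)}$ and $\int (k\wedge n)d\mu^{(n)}\le 1$; choose $n$ via boundedness so that $\mu^{(n)}^*(k>n)=0$ for all $k\in\K$ — this is possible for $n$ large since $\mu^{(n)}\le\abs\lambda$ and $\K$ is $L^0(\lambda)$-bounded, wait, $\mu^{(n)}$ depends on $n$... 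The honest resolution is: first choose $N$ so large (using $L^0(\lambda)$-boundedness of $\K$ and \eqref{tcheby}) that there is a fixed $\mu_0\in\Prob_*(\lambda)$ — obtained from Lemma~\ref{lemma bdd} applied to $\co(\K_N)$ — with $\mu_0^*(k>N)=0$ for all $k\in\K$, whence $k=k\wedge N$ $\mu_0$-a.s.\ and $\int k\,d\mu_0=\int(k\wedge N)\,d\mu_0<\infty$ uniformly; the existence of such an $N$ uniform over $k\in\K$ is precisely the content of $L^0(\lambda)$-boundedness combined with the normalization $\mu_0\le\abs\lambda$. That $\mu_0$ then lies in $\Prob_*(\lambda;\K)$. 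The step I flag as the crux is thus establishing that a \emph{single} truncation level $N$ works simultaneously for the boundedness estimate and for the $\mu_0$-a.s.\ equality $k=k\wedge N$ — this is where the interplay between $L^0(\lambda)$-boundedness of $\K$ and the inequality $\mu_0\le\abs\lambda$ furnished by Lemma~\ref{lemma bdd} must be exploited, and it is the only non-routine point.
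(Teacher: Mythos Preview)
Your plan has a genuine gap at the ``crux'' you flag. You want, after applying Lemma~\ref{lemma bdd} to the truncated set $\co(\K_N)$ to obtain $\mu_0\in\Prob_*(\lambda)$ with $\mu_0\le c\abs\lambda$, to choose $N$ so large that $\mu_0^*(k>N)=0$ for every $k\in\K$, and you claim that ``the existence of such an $N$ uniform over $k\in\K$ is precisely the content of $L^0(\lambda)$-boundedness combined with the normalization $\mu_0\le\abs\lambda$''. This is false. Boundedness in $L^0(\lambda)$ gives only
\[
\lim_{N\to\infty}\sup_{k\in\K}\abs\lambda^*(k>N)=0,
\]
not that the supremum \emph{vanishes} for some finite $N$; the inequality $\mu_0^*(k>N)\le c\abs\lambda^*(k>N)$ therefore yields tails that are small but in general strictly positive for every $N$. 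So you cannot conclude $k=k\wedge N$ $\mu_0$-a.s., and the argument collapses. (Your earlier attempt via $\mu=\sum_n2^{-n}\mu_n$ fails for the reason you already noticed: the inequality points the wrong way.) There is also a circularity: the level $N$ must be fixed \emph{before} you apply Lemma~\ref{lemma bdd} to produce $\mu_0$, so you cannot choose $N$ in terms of properties of $\mu_0$.

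The missing idea is to make a \emph{single} application of Lemma~\ref{lemma bdd} to a convex set that already contains truncations at \emph{every} level. Concretely, with $f$ the $\lambda$-a.s.\ lower bound, form
\[
\K_0=\{\alpha(k-f)+\beta\abs f:\alpha,\beta\ge0,\ \alpha+\beta\le1,\ k\in\K\},
\qquad
\K_1=\{h\wedge g:h\in L^1(\lambda)_+,\ g\in\K_0\}.
\]
Then $\K_1\subset L^1(\lambda)_+$ is convex, contains $0$, and is bounded in $L^0(\lambda)$; Lemma~\ref{lemma bdd} gives a single $\mu\in\Prob_*(\lambda)$ with $\sup_{g\in\K_1}\mu(g)<\infty$. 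Since $(k-f)\wedge n$ and $\abs f\wedge n$ lie in $\K_1$ for \emph{every} $n$, the bound $\int((k-f)\wedge n)\,d\mu\le\sup_{g\in\K_1}\mu(g)$ is uniform in $n$, and letting $n\to\infty$ gives $\int\abs k\,d\mu$ bounded uniformly over $k\in\K$. Thus the remedy for the failure of a fixed truncation is not to pick the right $N$, but to let the lemma handle all $N$ at once and pass to the limit afterward.
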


\begin{proof}
Let $f\in L^0(\lambda)$ be a $\lambda$-a.s. lower bound for $\K$ and define the sets 
\begin{equation*}
\K_0=\{\alpha(k-f)+\beta\abs f:k\in\K,\ \alpha,\beta\ge0,\ \alpha+\beta\le1\}
\quad\text{and}\quad
\K_1=\{h\wedge k:h\in L^1(\lambda)_+,k\in\K_0\}
\end{equation*}
Observe that
$\K_1$ is a convex subset of $L^1(\lambda)_+$ with $\0\in\K_1$; moreover, 
$\K_1$ is bounded in $L^0(\lambda)$. We deduce from Lemma \ref{lemma bdd} 
the existence of $\mu\in\Prob_*(\lambda)$ such that $\K_1\subset L^1(\mu)$ 
and $\sup_{h\in\K_1}\int hd\mu<\infty$. If $k\in\K$ and $c>0$, then the following
inequality holds $\lambda$-a.s.:
\begin{align*}
\abs k\wedge n
=
\dabs{(k-(f-c))+(f-c)}\wedge n
\le
((k-f)\wedge n)+(\abs f\wedge n)+2c
\end{align*}
Given that $k-f,\abs f\in\K_0$ we conclude that
$\int\abs kd\mu=\lim_n\int(\abs k\wedge n)d\mu\le2[\sup_{h\in\K_1}\int hd\mu+c]$
and the claim follows from the fact that $c$ was chosen arbitrarily.
\end{proof}

Of course there are cases in which the claim of Theorem \ref{th bdd} is rather trivial.
The following are two easy examples.

\begin{example}
\label{ex supremum}
Let $\Omega=\N$ and $\A=2^\N$. Define $\lambda^c,\lambda^\perp\in ba(\A)$
implicitly by letting
\begin{equation*}
\lambda^c(A)=\sum_{k\in A}2^{-k}
\quad\text{and}\quad
\lambda^\perp(A)=\LIM_k\set A(k)
\qquad A\in\A
\end{equation*}
where $\LIM$ denotes the Banach limit. Let $\lambda=\lambda^c+\lambda^\perp$.
Of course, $\lambda^c$ and $\lambda^\perp$ are the countably additive and the
purely finitely additive components of $\lambda$. Define the function
\begin{equation*}
f_n(k)=\exp\left(\frac{n}{1+\abs{n-k}}\right)
\qquad k,n\in\N
\end{equation*}
and let $\K=\co(\{f_n:n\in\N\})$. Observe that 
\begin{align*}
\int f_nd\lambda
\ge
2^{-n}f_n(n)
=
2^{-n}\exp(n)
\end{align*}
so that $\sup_n\int f_nd\lambda=\infty$. Fix $c>1$ and observe that the inequality 
$f_n(k)>\exp(c)$ implies $n,k>c$. Thus if $g=\sum_{i=1}^Ia_if_{n_i}\in\K$ then
\begin{align*}
\{j:g(j)>\exp(c)\}\subset\bigcup_{i=1}^I\{j:f_{n_i}(j)>\exp(c)\}\subset\{j:j>c\}
\end{align*}
so that $\lambda^c(k>c)\le2^{-c}$. On the other hand $\lambda^\perp$ 
does not charge any finite set so that $\sup_{n,\varepsilon}\lambda^\perp%
(f_n>\varepsilon)=0$. The set $\K$ then meets the conditions of Theorem
\ref{th bdd}. Let  $z(k)=2^{-k}$ and observe that $zf_n\le1$ so that
$\mu=\lambda_z$%
\footnote{That is $\mu(A)=\lambda(z\set A)$} 
is such that $\sup_{k\in\K}\int kd\mu\le1$. Moreover,
$z(k)\le1$ so that indeed $\mu\in\Prob_*(\lambda;\K)$. Observe also that
$(\mu+\lambda^\perp)/2$ is another element of $\Prob_*(\lambda;\K)$.
\end{example}

In the preceding example the set $\K$ actually admits a finite supremum. The
following example shows that if the underlying space is countable then the existence 
of a finite supremum is somehow unavoidable under countable additivity, a fact that 
motivates interest for finite additivity.

\begin{example}
\label{ex no supremum}
Let $\Omega$ and $\A$ be as in the previous example and let $\lambda\in ba(\A)_+$
be such that $\lambda^c\ne0$. Observe that for each $A\in\A$,
\begin{equation}
\label{decomp}
\begin{split}
\lambda(A)
&=
\lim_n\lambda(A\cap\{k\le n\})+\lim_n\lambda(A\cap\{k>n\})\\
&=
\sum_{k\in A}\lambda(\{k\})+\lim_n\lambda(A\cap\{k>n\})\\
&=
\lambda^c(A)+\lambda^\perp(A)
\end{split}
\end{equation}
Let $\K$ be the convex hull of a set $\{f_n:n\in\N\}$ of functions $f_n:\N\to\R_+$ and
define $f^*=\sup_nf_n$. For $\K$ to be bounded in $L^0(\lambda)$ it is necessary that 
$\lambda^c(f^*=\infty)=0$. Suppose not. Then there exists $k\in\N$ such that
$\lambda(\{k\})>0$ and for each $j$ there exists $n_j\in\N$ such that $f_{n_j}(k)>2^j$.
But then, if $\mu$ is as in the statement of Theorem \ref{th bdd} one has
$\mu(\{k\})\le2^{-j}\int f_{n_j}d\mu\le2^{-j}\sup_{k\in\K}\int kd\mu$ so that
$\mu(\{k\})=0$ contradicting the inclusion $\mu\in\Prob_*(\lambda)$.
Let now
\begin{equation*}
f_n=\frac{2^{2n}}{1+\abs{k-n}}
\end{equation*}
It is then obvious that $f_n(k)<f_{n+1}(k)$ and that $f^*(k)=\infty$ for 
each $k\in\N$. However, $\K$ is bounded in $L^1(\mu)$ if and only if $\mu$ is 
purely finitely additive. In fact for any such $\mu$ and $N\in\N$ one has 
$\mu(\{1,\ldots,N\}))=0$ so that $\sup_{n,\varepsilon}\mu(f_n>\varepsilon)=0$ 
and thus $\int f_nd\mu=0$. On the other hand, as shown above, if the integrals 
$\int f_nd\mu$ are uniformly bounded this implies $\mu(\{k\})=0$ and, by 
\eqref{decomp}, $\mu^c=0$.
\end{example}

The following result further contributes to understand the role of convexity. 

\begin{corollary}
\label{cor bdd ca}
Let $\K\subset L^0(\lambda)_+$ and assume that $\lambda\in ca(\A)$. Then
$\Prob_*(\lambda;\K)\ne\emp$ if and only if $\co(\K)$ is bounded in $L^0(\lambda)$.
\end{corollary}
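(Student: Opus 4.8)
The plan is to prove both directions, with the forward (non-trivial) implication resting on Theorem \ref{th bdd} together with a countable-additivity argument that upgrades boundedness in $L^1(\mu)$ to boundedness in $L^0(\lambda)$ of the convex hull. For the easy direction, suppose $\co(\K)$ is bounded in $L^0(\lambda)$. Since $\K\subset L^0(\lambda)_+$, the set $\co(\K)$ lies in $L^0(\lambda)_+$ and admits $0$ as a $\lambda$-a.s. lower bound, so Theorem \ref{th bdd} applies directly to $\co(\K)$ and yields $\mu\in\Prob_*(\lambda)$ with $\co(\K)\subset L^1(\mu)$ and $\sup_{k\in\co(\K)}\int kd\mu<\infty$; in particular $\K$ is bounded in $L^1(\mu)$, so $\mu\in\Prob_*(\lambda;\K)$ and this set is non-empty.

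For the converse, assume $\mu\in\Prob_*(\lambda;\K)$, so $\K$ is bounded in $L^1(\mu)$, i.e. $\lim_{c\to\infty}\sup_{k\in\K}\mu(\abs k>c)=0$; since $\K\subset L^0(\lambda)_+$ we may drop the absolute values. First I would observe that, because $\mu\in\Prob_*(\lambda)$ means $\mu\ll\abs\lambda$ with the same null sets and $\mu$ is a probability measure, boundedness in $L^1(\mu)$ is equivalent to uniform integrability failing only mildly — more precisely, I want to pass to a uniformly integrable family. Here I would use convexity: the key point is that boundedness of $\K$ in $L^1(\mu)$ does \emph{not} automatically give boundedness of $\co(\K)$ in $L^0(\mu)$, but it does give boundedness of $\co(\K)$ in $L^1(\mu)$, since for $g=\sum_i b_i f_i\in\co(\K)$ with $f_i\in\K$, $b_i\ge0$, $\sum_i b_i=1$, one has $\int gd\mu=\sum_i b_i\int f_id\mu\le\sup_{k\in\K}\int kd\mu<\infty$. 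Then, by Tchebycheff's inequality \eqref{tcheby} applied under $\mu$ (which is countably additive here, being dominated in the $ba_\infty$ sense by the countably additive $\abs\lambda$ — this needs $\lambda\in ca(\A)$, hence the hypothesis), $\co(\K)$ is bounded in $L^0(\mu)$: $\sup_{g\in\co(\K)}\mu(g>c)\le c^{-1}\sup_{g\in\co(\K)}\int gd\mu\to0$.

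It remains to transfer boundedness in $L^0(\mu)$ to boundedness in $L^0(\lambda)$. Since $\lambda\in ca(\A)$ and $\mu\in\Prob_*(\lambda)$, the measures $\mu$ and $\abs\lambda$ are mutually absolutely continuous countably additive measures on $\A$; extending both to $\sigma\A$ and invoking absolute continuity in the $\varepsilon$–$\delta$ form, for every $\varepsilon>0$ there is $\delta>0$ such that $\mu(B)<\delta$ implies $\abs\lambda^*(B)<\varepsilon$ (here I use that the outer measure $\abs\lambda^*$ is controlled by the $\sigma\A$-extension, as in \eqref{tcheby}). Applying this with $B=\{g>c\}$ for $g\in\co(\K)$ and choosing $c$ large enough that $\sup_{g\in\co(\K)}\mu(g>c)<\delta$ gives $\sup_{g\in\co(\K)}\abs\lambda^*(g>c)<\varepsilon$, i.e. $\co(\K)$ is bounded in $L^0(\lambda)$. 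The main obstacle I anticipate is making the $\varepsilon$–$\delta$ absolute-continuity step fully rigorous at the level of outer measures on the algebra $\A$ rather than on $\sigma\A$: one must check that $\mu$ and $\abs\lambda$, being countably additive and mutually absolutely continuous on $\A$, remain so on $\sigma\A$ (standard for countably additive set functions on an algebra via Carathéodory extension and Radon–Nikodym) and that the relevant outer measures are exactly the restrictions of these extensions, which is precisely the content cited from \cite[3.3.3]{rao} and underlies \eqref{tcheby}; once that is in place the argument is routine.
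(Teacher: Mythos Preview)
Your argument is correct and follows exactly the paper's route: boundedness of $\K$ in $L^1(\mu)$ passes to $\co(\K)$ by linearity of the integral, Tchebycheff then gives boundedness of $\co(\K)$ in $L^0(\mu)$, and countable additivity of $\lambda$ together with $\mu\in\Prob_*(\lambda)$ yields $\lambda\ll\mu$ (your $\varepsilon$--$\delta$ step), hence boundedness in $L^0(\lambda)$; the converse is Theorem~\ref{th bdd} applied to $\co(\K)$ with lower bound $0$. One small slip: your ``i.e.'' after ``$\K$ is bounded in $L^1(\mu)$'' actually states the $L^0(\mu)$-boundedness condition rather than $\sup_{k\in\K}\int\abs k\,d\mu<\infty$, though you use the correct meaning in the very next line, and note that Tchebycheff \eqref{tcheby} holds for finitely additive $\mu$ as well, so countable additivity is only needed for the $\lambda\ll\mu$ step.
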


\begin{proof}
It is clear that if $\mu$ is as in the claim and $\K$ is a bounded subset of $L^1(\mu)$,
then so is its convex hull $\co(\K)$ which is then bounded in $L^0(\mu)$ too. However,
under the assumption that $\lambda$ is countably additive, $\mu\in\Prob_*(\lambda)$
implies $\lambda\ll\mu$ from which follows that $\co(\K)$ is bounded in $L^0(\lambda)$.
The converse implication follows easily from Theorem \ref{th bdd}.
\end{proof}

\section{Some Topological Implications}
\label{sec topology}

Theorem \ref{th bdd} implies that some subsets of $L^0(\lambda)$ are closed 
in the $L^1(\mu)$ topology with $\mu\in\Prob_*(\lambda)$. 

A first implication of Theorem \ref{th bdd} is the following:

\begin{theorem}
\label{th sep}
Let $\K\subset L^0(\lambda)_+$ be convex and bounded in $L^0(\lambda)$ and define 
\begin{equation}
\label{C}
\C=\left\{f\in L^0(\lambda)_+:f\le g\text{ for some }g\in\K\right\}
\end{equation}
Then, 
\begin{equation}
\cls\C0\lambda\subset\cls\C0\mu=\cls\C1\mu
\qquad\mu\in\Prob_*(\lambda;\C)
\end{equation}
If $\lambda\in ca(\A)$, then
$\cls\C0\lambda=\cls\C1\mu$ for every $\mu\in\Prob_*(\lambda;\K)$.
\end{theorem}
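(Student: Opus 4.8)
The plan is to establish the chain $\cls\C0\lambda\subseteq\cls\C0\mu=\cls\C1\mu$ for an arbitrary $\mu\in\Prob_*(\lambda;\C)$, the non-trivial content being the inclusion $\cls\C0\mu\subseteq\cls\C1\mu$, and then to deduce the countably additive refinement. First I would record the structural facts about $\C$: since $\K$ is convex and $\K\subseteq L^0(\lambda)_+$, the set $\C$ defined in \eqref{C} is convex, contained in $L^0(\lambda)_+$, admits $0$ as a $\lambda$-a.s.\ lower bound, and is bounded in $L^0(\lambda)$ because $\abs\lambda^*(f>c)\le\sup_{g\in\K}\abs\lambda^*(g>c)$ whenever $f\in\C$. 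Theorem \ref{th bdd} then guarantees $\Prob_*(\lambda;\C)\neq\emp$, and for each such $\mu$ one has $\mu\in\Prob(\A)_+$, $\C\subseteq L^1(\mu)$ and $M:=\sup_{g\in\C}\int g\,d\mu<\infty$. Since $\mu\in\Prob_*(\lambda)$ there is $c_0$ with $\abs\mu\le c_0\abs\lambda$, so $\int(\abs{f-g}\wedge1)\,d\abs\mu\le c_0\int(\abs{f-g}\wedge1)\,d\abs\lambda$; hence $L^0(\lambda)\subseteq L^0(\mu)$ and every $\lambda$-convergent sequence is $\mu$-convergent, which already gives $\cls\C0\lambda\subseteq\cls\C0\mu$. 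Likewise $\cls\C1\mu\subseteq\cls\C0\mu$ is immediate from \eqref{tcheby} applied to the probability $\mu$, since $L^1(\mu)$-convergence implies $\mu$-convergence.

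For the substantive inclusion $\cls\C0\mu\subseteq\cls\C1\mu$ I would proceed as follows. Fix $h\in\cls\C0\mu$; as $L^0(\mu)$ is metrizable there are $h_n\in\C$, say $0\le h_n\le g_n$ with $g_n\in\K$, with $h_n\to h$ in $\mu$-measure, and we may assume $h\ge0$. For a fixed integer $m$ the truncation $h_n\wedge m$ again belongs to $\C$, being a non-negative element of the vector lattice $L^0(\lambda)$ dominated by $g_n\in\K$; it is bounded by the constant $m\in L^1(\mu)$, and, $x\mapsto x\wedge m$ being $1$-Lipschitz, $h_n\wedge m\to h\wedge m$ in $\mu$-measure. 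The finitely additive dominated convergence theorem, see \cite{bible}, then yields $h_n\wedge m\to h\wedge m$ in $L^1(\mu)$, so $h\wedge m\in\cls\C1\mu$ for every $m$; since $g\mapsto\int g\,d\mu$ is continuous on $L^1(\mu)$ and $\C\subseteq\cls\C1\mu$, this forces $\int(h\wedge m)\,d\mu\le M$ for every $m$.

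The remaining passage from the truncations $h\wedge m$ to $h$ is the step I expect to be the main obstacle, since under finite additivity the monotone convergence theorem may fail and one cannot simply conclude $\int(h\wedge m)\,d\mu\to\int h\,d\mu$. The device I would use is that $L^1(\mu)$ is nonetheless complete: the sequence $(h\wedge m)_m$ is increasing and, for $m'\ge m$, $\dnorm{h\wedge m'-h\wedge m}_{L^1(\mu)}=\int(h\wedge m')\,d\mu-\int(h\wedge m)\,d\mu$, which tends to $0$ because the real sequence $(\int(h\wedge m)\,d\mu)_m$ is increasing and bounded by $M$, hence Cauchy; so $(h\wedge m)_m$ is Cauchy in $L^1(\mu)$ and converges there to some $\tilde h$, hence also in $\mu$-measure. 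On the other hand $h\wedge m\to h$ in $\mu$-measure, because $\abs\mu^*(h\wedge m\neq h)=\abs\mu^*(h>m)\le M/m$ by \eqref{tcheby} applied to $h\wedge(m+1)\in L^1(\mu)_+$. Uniqueness of limits in the metric space $L^0(\mu)$ gives $\tilde h=h$, so $h\in L^1(\mu)$ and $h\wedge m\to h$ in $L^1(\mu)$; as every $h\wedge m$ lies in the $L^1(\mu)$-closed set $\cls\C1\mu$, so does $h$. This closes the chain $\cls\C0\lambda\subseteq\cls\C0\mu=\cls\C1\mu$.

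For the countably additive case I would reason as follows. If $\lambda\in ca(\A)$ and $\mu\in\Prob_*(\lambda;\K)$, then for $g\in\C$ one has $0\le g\le g'$ for some $g'\in\K\subseteq L^1(\mu)$, so $\C\subseteq L^1(\mu)$ with $\sup_{g\in\C}\int g\,d\mu\le\sup_{g'\in\K}\int g'\,d\mu<\infty$; together with the boundedness of $\C$ in $L^0(\lambda)$ this means $\mu\in\Prob_*(\lambda;\C)$, so the first part applies. Moreover $\abs\mu\le c_0\abs\lambda$ with $\abs\lambda$ countably additive makes $\abs\mu$ countably additive, while $\mu\in\Prob_*(\lambda)$ makes $\abs\mu$ and $\abs\lambda$ share the same null sets; hence these two measures are mutually absolutely continuous and countably additive, and the $(\varepsilon,\delta)$-characterization of absolute continuity available in the countably additive setting shows that a sequence converges in $\mu$-measure precisely when it converges in $\lambda$-measure. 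Therefore $\cls\C0\mu=\cls\C0\lambda$, and combining this with the first part yields $\cls\C0\lambda=\cls\C0\mu=\cls\C1\mu$.
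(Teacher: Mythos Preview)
Your proof is correct and follows essentially the same route as the paper's: compare the $L^0(\lambda)$, $L^0(\mu)$ and $L^1(\mu)$ topologies, truncate an approximating sequence to land the truncations $h\wedge m$ in $\cls\C1\mu$ via bounded convergence, and then pass to the limit in $m$ using the $L^1(\mu)$-boundedness of $\C$; the countably additive refinement is likewise obtained by observing that $\mu$ and $\lambda$ become equivalent. The only notable difference is that you spell out the passage from $h\wedge m$ to $h$ in detail---showing $(h\wedge m)_m$ is $L^1(\mu)$-Cauchy, invoking completeness of $L^1(\mu)$ in the Dunford--Schwartz sense, and identifying the $L^1$-limit with $h$ via uniqueness of $L^0(\mu)$-limits---whereas the paper compresses this step into the single clause ``since $\C$ is bounded in $L^1(\mu)$, $f\in\cls\C1\mu$''.
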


\begin{proof}
By Theorem \ref{th bdd} we can choose $\mu\in\Prob_*(\lambda;\K)$. Then $\C$ 
is a bounded subset of $L^1(\mu)_+$ and thus of $L^0(\mu)$. A relative
comparison of the corresponding topologies shows that $\cls\C0\lambda\subset\cls\C0\mu$
and that $\cls\C1\mu\subset\cls\C0\mu$. It remains to prove that 
$\cls\C0\mu\subset\cls\C1\mu$. Fix $f\in\cls\C0\mu$. Then $f\ge0$ $\mu$-a.s. as 
$\mu^*(\abs{f-h}>c)\ge\mu^*(f<-\varepsilon)$ for $c<\varepsilon$ and $h\in\C$. 
There is a sequence $\seqn f$ in $\C$ that $\mu$-converges to $f$ and thus such that 
$\abs{f_n-f}\wedge k$ converges to $0$ in $L^1(\mu)$ for all $k>0$. The inequality 
$f_n\wedge k-f\wedge k\le\abs{f_n-f}\wedge k$ implies that the sequence 
$\sseqn{f_n\wedge k}$ converges to $f\wedge k$ in $L^1(\mu)$. Thus 
$f\wedge k\in\cls\C1\mu$ and, since $\C$ is bounded in $L^1(\mu)$, $f\in\cls\C1\mu$. 
If $\lambda\in ca(\A)$ and $\mu\in\Prob_*(\lambda)$ then $\lambda\ll\mu$ so that 
$\cls\C0\mu=\cls\C0\lambda$.
\end{proof}

The coincidence of the $L^0(\mu)$ (or even $L^0(\lambda)$) and the $L^1(\mu)$ 
closures may be useful in applications such as the separation of sets, 
a problem which is generally difficult to deal with in $L^0(\mu)$ due to 
the non linear nature of the induced topology. 

\begin{theorem}
\label{th ds}
Let $\A$ be a $\sigma$ algebra and $\lambda\in ca(\A)_+$. Let $\K$ and $\C$ be as 
in Theorem \ref{th sep}, define $\D=\C\cap L^\infty(\lambda)$ and designate by 
$\overline\D^*$ the closure of $\D$ in the weak$^*$ topology of $L^\infty(\lambda)$. 
Then, 
\begin{equation}
\overline\D^*=\cls\D1\mu\cap L^\infty(\lambda)
\qquad
\mu\in\Prob_*(\lambda;\K)
\end{equation}
\end{theorem}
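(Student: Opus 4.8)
The plan is to prove the two inclusions separately, exploiting Theorem \ref{th sep} to translate $L^\infty$-weak$^*$ convergence into $L^1(\mu)$-convergence and back. Fix $\mu\in\Prob_*(\lambda;\K)$; since $\lambda\in ca(\A)$ we have $\lambda\ll\mu$, so $L^\infty(\lambda)=L^\infty(\mu)$ as ordered vector spaces and the weak$^*$ topology on $L^\infty(\lambda)$ is $\sigma(L^\infty(\mu),L^1(\mu))$. Note also that $\D=\C\cap L^\infty(\lambda)$ is convex, so all the closures in sight are closures of convex sets, which lets us freely interchange weak and norm closures in $L^1(\mu)$ and invoke the Hahn--Banach separation theorem.

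For the inclusion $\cls\D1\mu\cap L^\infty(\lambda)\subset\overline\D^*$, I would argue as follows. The set $\D$ is bounded in $L^\infty(\lambda)$: indeed $\C$ is bounded in $L^1(\mu)$ by the choice of $\mu$, but more is true here because the elements of $\D$ are also in $L^\infty(\lambda)$, and we need a uniform $L^\infty$ bound. Here I would actually work with a fixed constant $a>0$ and replace $\D$ by $\D_a=\{f\wedge a:f\in\D\}$ (or intersect $\D$ with the order interval $[0,g]$ for a fixed $g\in\K$ if $\K$ itself is $L^\infty$-bounded); the point is that any $f\in\cls\D1\mu\cap L^\infty(\lambda)$ with $\|f\|_\infty\le a$ lies in the $L^1(\mu)$-closure of a norm-bounded convex subset of $L^\infty(\lambda)$, and on norm-bounded subsets of $L^\infty(\mu)$ the $L^1(\mu)$-topology and the weak$^*$ topology of $L^\infty(\mu)$ have the same closed convex sets. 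Concretely: if $f$ is in the $L^1(\mu)$-closure of the convex set $\D$ but not in $\overline\D^*$, Hahn--Banach in $(L^\infty(\mu),\text{weak}^*)$ yields $h\in L^1(\mu)$ with $\sup_{d\in\D}\int hd\,d\mu<\int hf\,d\mu$; but $\int h\,d\mu$-integration is $L^1(\mu)$-continuous (since $h$ is fixed in $L^1(\mu)$), which contradicts $f\in\cls\D1\mu$. This gives the inclusion, once the uniform $L^\infty$ bound issue is pinned down.

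For the reverse inclusion $\overline\D^*\subset\cls\D1\mu\cap L^\infty(\lambda)$, I would first observe that $\overline\D^*\subset L^\infty(\lambda)$ trivially, and that every element of $\overline\D^*$ lies in $\cls\C1\mu$: indeed weak$^*$ convergence in $L^\infty(\mu)$ implies weak convergence in $L^1(\mu)$ (test against $L^1(\mu)\subset L^\infty(\mu)^*$... more carefully, against $L^\infty(\mu)\subset L^1(\mu)^*$ and use that bounded weak$^*$-convergent nets are bounded in $L^1(\mu)$ as well), so $\overline\D^*$ is contained in the weak closure of $\D$ in $L^1(\mu)$, which equals $\cls\D1\mu$ by convexity (Mazur). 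Then $\overline\D^*\subset\cls\D1\mu\cap L^\infty(\lambda)=\cls\D1\mu\cap L^\infty(\lambda)$, which is exactly what we want. The role of Theorem \ref{th sep} is to guarantee that this $L^1(\mu)$-closure is the ``right'' object, i.e. that it agrees with $\cls\C0\lambda\cap L^\infty$ and is therefore intrinsic, not dependent on the auxiliary $\mu$; it also supplies $\mu\in\Prob_*(\lambda;\K)$ in the first place via Theorem \ref{th bdd}.

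The main obstacle I anticipate is the uniform $L^\infty$-boundedness of $\D$ (and hence the legitimacy of passing between the $L^1(\mu)$ and weak$^*$ topologies, which coincide only on norm-bounded sets of $L^\infty$). If $\K$ is merely $L^0(\lambda)$-bounded and convex, its elements need not be uniformly bounded in $L^\infty$, so $\D=\C\cap L^\infty(\lambda)$ could contain functions of arbitrarily large sup-norm, in which case $\overline\D^*$ need not even be weak$^*$-bounded and the statement as written would be in trouble. I expect the resolution is that one works level by level, writing $\D=\bigcup_a(\D\cap\{f\le a\})$ and checking that both closures commute with this union — or, more likely, that in the intended reading $\K$ (hence $\C$, hence $\D$) is tacitly order-bounded above by a fixed $g\in L^\infty(\lambda)$, so that $\D$ is automatically norm-bounded in $L^\infty(\lambda)$ and the Banach--Alaoglu/Krein--Smulian machinery applies directly. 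Modulo that point, the proof is a clean separation-theorem argument plus Mazur's theorem, with Theorem \ref{th sep} doing the work of producing and legitimising $\mu$.
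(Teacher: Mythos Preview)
Your argument for $\overline\D^*\subset\cls\D1\mu$ is essentially the paper's: once you know $\overline\D^*\subset L^1(\mu)$, weak$^*$ convergence in $L^\infty(\lambda)$ tested against $hZ\in L^1(\lambda)$ (with $Z=d\mu/d\lambda$) gives weak convergence in $L^1(\mu)$, and Mazur closes the gap. The paper makes the $L^1(\mu)$-boundedness of $\overline\D^*$ explicit by testing against $Z$, which is the step your sketch gestures at.

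The gap is in the other inclusion. Your separation argument does not go through: if $f\notin\overline\D^*$, Hahn--Banach in $(L^\infty(\lambda),\sigma(L^\infty,L^1))$ produces $h\in L^1(\lambda)$ with $\sup_{d\in\D}\int hd\,d\lambda<\int hf\,d\lambda$. But $g\mapsto\int hg\,d\lambda$ is \emph{not} $L^1(\mu)$-continuous unless $h/Z\in L^\infty(\mu)$, which you have no reason to expect. So this does not contradict $f\in\cls\D1\mu$. You correctly sense the obstruction, but your proposed fix (assume $\D$ is $L^\infty$-bounded, or decompose as $\bigcup_a\D\cap[0,a]$) misidentifies it: $\D$ is genuinely \emph{not} $L^\infty$-bounded in general (it contains $g\wedge n$ for every $g\in\K$ and every $n$), and neither closure commutes with that union.

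The paper's resolution is a direct construction exploiting the order structure of $\C$. Given $f\in\cls\D1\mu\cap L^\infty(\lambda)$, choose $h_n\in\C$ with $h_n\to f$ in $L^1(\mu)$; since $\lambda\ll\mu$ this is $\lambda$-convergent, so a subsequence converges $\lambda$-a.s. Now set $\bar h_n=h_n\wedge\norm f_{L^\infty(\lambda)}$. The crucial point you are missing is that $\C$ is stable under truncation by constants, so $\bar h_n\in\D$, and this truncated sequence is uniformly bounded in $L^\infty$ by $\norm f_\infty$. Then $\bar h_n\to f$ $\lambda$-a.s.\ (since $f\le\norm f_\infty$), and dominated convergence against any $g\in L^1(\lambda)$ gives $\int g\bar h_n\,d\lambda\to\int gf\,d\lambda$, i.e.\ $f\in\overline\D^*$. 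The truncation manufactures the $L^\infty$-bound for the approximating sequence alone, without any global boundedness of $\D$.
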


\begin{proof}
Fix $f\in\overline\D^*$. Then $f\ge0$ $\lambda$-a.s. as otherwise
$
\inf_{h\in\D}\int_{\{f<-\varepsilon\}}(h-f)d\lambda
>
\varepsilon\lambda(f<-\varepsilon)
$
which is contradictory. Let $\mu\in\Prob_*(\lambda;\K)$ and denote 
by $Z$ its density with respect to $\lambda$. If $f\in\overline\D^*$ and $\varepsilon>0$,
then there exists $h\in\D$ such that 
$
\varepsilon\ge\int Z(f-h)d\lambda
=
\int fd\mu-\int hd\mu
$, 
so that, $\D$ being bounded in $L^1(\mu)$, the same must be true of $\overline\D^*$. 
By standard arguments, \cite[V.3.13]{bible}, $\cls\D1\mu$ is closed in the weak topology 
of $L^1(\mu)$, 
i.e. the topology induced by $L^\infty(\mu)$. Given the inclusion 
$L^\infty(\mu)\subset L^1(\lambda)$, the restriction of the weak topology of 
$L^1(\mu)$ to $L^\infty(\lambda)$ is weaker than the weak$^*$ topology of 
$L^\infty(\lambda)$, we conclude that $\overline\D^*\subset\cls\D1\mu$. 
Conversely, for each $f\in\cls\D1\mu\cap L^\infty(\lambda)$, there exists a 
sequence $\seqn h$ in $\C$ that converges to $f\in L^\infty(\lambda)$ in the norm 
of $L^1(\mu)$ and is thus $\lambda$-convergent. Upon passing to a subsequence 
and letting $\bar h_n=h_n\wedge\norm f_{L^\infty(\lambda)}$, we conclude that 
$\seqn{{\bar h}}$ converges $\lambda$-a.s. to $f$. Observe that $\bar h_n\in\D$ 
and that, for $g\in L^1(\lambda)$, Lebesgue dominated convergence implies
$\lim_n\int g\bar h_nd\lambda
=
\int gfd\lambda$. We conclude that $f\in\overline\D^*$. 
\end{proof}

Theorem \ref{th ds} thus implies that the weak$^*$ topology on $\D$
is metrizable and may thus, e.g., be described in terms of sequences. Let us also 
mention that the situation described in the statement is crucial in many problems in 
mathematical finance and was first considered by Delbaen and Schachermayer 
\cite[Theorem 2.1]{delbaen schachermayer} who exploited it to establish a special 
version of the no arbitrage principle.

\section{Implications for $L^0(\lambda)$-Valued Operators.}
\label{sec operators}

We establish here some results on $L^1(\mu)$ continuity of $L^0(\lambda)$ valued
operators.

\begin{corollary}
\label{cor TVS}
Let $X$ be a locally convex topological vector space, $V\subset X$ a convex 
neighborhood of the origin and $T:X\to L^0(\lambda)$ a continuous linear operator 
such that $T[V]=\{T(x):x\in V\}$ admits a $\lambda$-a.s. lower bound. There 
exists then $\mu\in\Prob_*(\lambda)$ such that $T[X]\subset L^1(\mu)$ and 
$T:X\to L^1(\mu)$ is continuous.
\end{corollary}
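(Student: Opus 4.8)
The plan is to apply Theorem \ref{th bdd} to a suitable bounded convex set built from $T$ and then show that the resulting change of measure makes $T$ continuous. First I would observe that, $V$ being a neighborhood of the origin in the locally convex space $X$, it is absorbing, so that $\bigcup_{n}nV=X$ and it suffices to control $T$ on $V$; moreover, since $X$ is locally convex, we may assume without loss of generality that $V$ is convex and balanced. The key point is that $T$ being continuous and linear, $T[V]$ is a bounded subset of $L^0(\lambda)$: indeed, for every neighborhood $W$ of the origin in $L^0(\lambda)$ there is, by continuity, a neighborhood $U$ of the origin in $X$ with $T[U]\subset W$, and since $V$ is absorbed by $U$ (as $U$ is absorbing) we get $V\subset nU$ for some $n$, hence $T[V]\subset nW$; this is exactly the definition of boundedness in $L^0(\lambda)$ recalled in the introduction. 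Since $V$ is convex and $T$ is linear, $\K:=T[V]$ is convex, and by hypothesis it admits a $\lambda$-a.s. lower bound.

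Next I would invoke Theorem \ref{th bdd} to obtain $\mu\in\Prob_*(\lambda;\K)$, i.e. $\mu\in\Prob_*(\lambda)$ with $T[V]$ bounded in $L^1(\mu)$. Because $\mu\in ba_*(\lambda)$ one has $\abs\mu\le c\abs\lambda$ for some $c$, so $L^0(\lambda)\subset L^1(\mu)$ is false in general — rather, what matters here is that $T[V]\subset L^1(\mu)$ and is $L^1(\mu)$-bounded; since $X=\bigcup_n nV$ and $T$ is linear, this already gives $T[X]\subset L^1(\mu)$. Concretely, for the $L^1(\mu)$-boundedness of $T[V]$: a convex subset of $L^1(\mu)$ that admits a lower bound and is bounded in the sense of convergence in measure under $\mu$ is $L^1(\mu)$-norm bounded (this is precisely the content extracted inside the proof of Theorem \ref{th bdd}, via the Tchebycheff-type estimate \eqref{tcheby}), so let $M:=\sup_{x\in V}\norm{Tx}_{L^1(\mu)}<\infty$.

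Finally, to get continuity of $T:X\to L^1(\mu)$ it is enough to check continuity at the origin, and since $V$ is a neighborhood of $0$ in $X$ and $\varepsilon^{-1}M^{-1}\cdot$ — more carefully: for $\varepsilon>0$ the set $\tfrac{\varepsilon}{M}V$ is again a neighborhood of the origin in $X$ (scalar multiples of neighborhoods are neighborhoods), and for $x\in\tfrac{\varepsilon}{M}V$ we have $\tfrac{M}{\varepsilon}x\in V$, hence $\norm{Tx}_{L^1(\mu)}=\tfrac{\varepsilon}{M}\norm{T(\tfrac{M}{\varepsilon}x)}_{L^1(\mu)}\le\tfrac{\varepsilon}{M}\cdot M=\varepsilon$ by linearity of $T$. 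Thus $T$ maps the neighborhood $\tfrac{\varepsilon}{M}V$ of the origin into the $\varepsilon$-ball of $L^1(\mu)$, proving continuity. The one subtlety to handle with care — the part I expect to be the main obstacle — is the passage ensuring that $T[V]$ really is a convex set \emph{with a lower bound} to which Theorem \ref{th bdd} applies verbatim, and extracting from that theorem's proof not merely $\mu\in\Prob_*(\lambda)$ but the genuine $L^1(\mu)$-\emph{norm} bound on $T[V]$; one should either restate that quantitative conclusion or, more cleanly, apply Theorem \ref{th bdd} together with the observation that a bounded-in-measure convex set with a lower bound inside $L^1(\mu)$ is norm bounded, which is how the estimate in that proof was obtained. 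If $M=0$ the statement is trivial, so one may assume $M>0$.
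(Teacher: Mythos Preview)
Your overall strategy is exactly the paper's: set $\K=T[V]$, verify the hypotheses of Theorem~\ref{th bdd}, obtain $\mu\in\Prob_*(\lambda;\K)$, use that $V$ is absorbing to get $T[X]\subset L^1(\mu)$, and conclude continuity from boundedness on a neighborhood. Your final scaling argument $(\varepsilon/M)V\mapsto$ $\varepsilon$-ball is correct and simply spells out the paper's one-line ``bounded on a neighborhood of the origin and thus continuous''.

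There is, however, a genuine error in your justification that $T[V]$ is bounded in $L^0(\lambda)$. You argue: ``since $V$ is absorbed by $U$ (as $U$ is absorbing) we get $V\subset nU$ for some $n$''. This is a non sequitur. That a neighborhood $U$ of the origin is absorbing means it absorbs every \emph{point} of $X$, not every subset; in a general locally convex space a convex neighborhood of the origin need not be bounded (take $V=\{x:\abs{x_1}<1\}$ in $\R^\N$, or any half-space in $\R^2$), so there is no $n$ with $V\subset nU$. The paper itself only writes ``bounded in $L^0(\lambda)$ by continuity'' and gives no further detail, so it does not resolve the point either. The clean fix is already implicit in your own reduction to a \emph{balanced} $V$ but you never use it: once $V$ is balanced and $f\in L^0(\lambda)$ is the $\lambda$-a.s.\ lower bound of $T[V]$, then for $x\in V$ also $-x\in V$, whence $Tx\ge f$ and $-Tx=T(-x)\ge f$ $\lambda$-a.s., so $\abs{Tx}\le\abs f$ $\lambda$-a.s.\ for every $x\in V$. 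Thus $T[V]$ is order bounded by a single element of $L^0(\lambda)$ and is therefore bounded in $L^0(\lambda)$; continuity plays no role at this step.

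A minor point: your worry about ``extracting the $L^1(\mu)$-norm bound'' from Theorem~\ref{th bdd} is unnecessary. By the definition in \eqref{P(K)}, $\mu\in\Prob_*(\lambda;\K)$ \emph{means} that $\K$ is bounded in $L^1(\mu)$, so $M=\sup_{x\in V}\norm{Tx}_{L^1(\mu)}<\infty$ is immediate and your argument can proceed directly.
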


\begin{proof}
Let $\K=T[V]$. Then, $\K$ is convex, bounded in $L^0(\lambda)$ by continuity and 
lower bounded by assumption. By Theorem \ref{th bdd}, there exists
$\mu\in\Prob_*(\lambda;\K)$. Given that 
each neighborhood of the origin is absorbing, this implies that $T[X]\subset L^1(\mu)$. 
Moreover, $T:X\to L^1(\mu)$ is bounded on a neighborhood of the origin and it is thus 
continuous.
\end{proof}

A susbet $U$ of
a vector lattice $X$ is solid if $x\in U$, $y\in X$ and $\abs y\le\abs x$ imply
$y\in U$.

\begin{corollary}
\label{cor lattice}
Let $X$ be a vector lattice with a convex, solid topological basis. A positive, 
continuous operator $T:X\to L^0(\lambda)$ admits $\mu\in\Prob_*(\lambda)$ 
such that $T[X]\subset L^1(\mu)$ and $T:X\to L^1(\mu)$ is continuous.
\end{corollary}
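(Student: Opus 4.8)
The plan is to reduce this to Corollary \ref{cor TVS} by producing, from the convex solid topological basis of $X$, a single convex neighborhood $V$ of the origin for which $T[V]$ admits a $\lambda$-a.s. lower bound. First I would pick any convex, solid neighborhood $V$ of the origin from the given basis. Because $V$ is solid, it is in particular symmetric, so for $x\in V$ we also have $-x\in V$; and since $x=x^+-x^-$ with $0\le x^+\le|x|$ and $0\le x^-\le|x|$, solidity gives $x^+,x^-\in V$. The key observation is then that $T$ is positive, hence order bounded on order intervals; more to the point, for $x\in V$ we can write $Tx=Tx^+-Tx^-$ with $Tx^+,Tx^-\ge0$, so $Tx\ge -Tx^-$ and $-x^-\in V$. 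This already shows $T[V]\subset T[V]-T[V\cap X_+]$, but to get a genuine \emph{lower bound} I would argue that the set $\{Tx^-:x\in V\}=T[V\cap X_+]$ is bounded in $L^0(\lambda)$ (it is the image under the continuous $T$ of the bounded — indeed absorbing — set $V\cap X_+$), hence by Theorem \ref{th bdd} applied to its convex hull, or more simply: a bounded subset of $L^0(\lambda)_+$ need not have a lower bound in $L^0(\lambda)$ in general, so I cannot conclude this way directly.

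The cleaner route, which I would actually follow, is to apply Corollary \ref{cor TVS} not to $T$ but after a preliminary normalization. Take $V$ convex and solid as above; then $T[V]$ is convex and bounded in $L^0(\lambda)$. Split each element: for $x\in V$, $Tx = Tx^+ - Tx^-$, and both $T[V\cap X_+]=\{Tx^+:x\in V\}$ and $\{Tx^-:x\in V\}$ are contained in $T[V]\cap L^0(\lambda)_+$ — wait, that inclusion needs $Tx^+\in T[V]$, which holds since $x^+\in V$. So $T[V\cap X_+]\subset T[V]$, it is convex (as $V\cap X_+$ is convex, $V$ being solid) and bounded in $L^0(\lambda)_+$. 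Now the decisive point: I claim $0$ is a $\lambda$-a.s. lower bound for $T[V]$ — but that is false, since $Tx$ can be negative. Instead, observe $Tx \ge -\sup\{Tx^-:x\in V\}$ only if that supremum exists.

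Given this difficulty, the honest plan is: apply Theorem \ref{th bdd} directly to the convex bounded set $\K_+ := T[V\cap X_+]\subset L^0(\lambda)_+$, which trivially has the $\lambda$-a.s. lower bound $0$; this yields $\mu\in\Prob_*(\lambda)$ with $\K_+$ bounded in $L^1(\mu)$. Since every $x\in X$ satisfies $\kappa^{-1}x\in V$ for some $\kappa>0$ (absorbing), and $T(\kappa^{-1}x)^{\pm}\in\K_+\subset L^1(\mu)$, we get $Tx = T(\kappa^{-1}x)^+\cdot\kappa - T(\kappa^{-1}x)^-\cdot\kappa$ — but positivity of $T$ does not give $T(x^+)=(Tx)^+$, so I should instead note $(\kappa^{-1}x)^+$ and $(\kappa^{-1}x)^-$ lie in $V\cap X_+$ directly, so $Tx\in\lin(\K_+)\subset L^1(\mu)$, giving $T[X]\subset L^1(\mu)$. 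For continuity: on $V\cap X_+$, $T$ maps into the $L^1(\mu)$-bounded set $\K_+$; for general $x\in V$, $|Tx|\le T x^+ + Tx^-$ with $x^+,x^-\in V\cap X_+$, so $\int|Tx|\,d\mu\le 2\sup_{\K_+}\int k\,d\mu<\infty$. Thus $T$ is bounded on the neighborhood $V$, hence $L^1(\mu)$-continuous. The main obstacle is precisely the step just navigated — $T$ positive does \emph{not} commute with the lattice operations, so the lower bound must be extracted from the solid structure of $V$ (giving $x^\pm\in V$) together with positivity of $T$ (giving $Tx^\pm\ge0$ and $|Tx|\le Tx^++Tx^-$), rather than from any lattice-homomorphism property of $T$ itself.
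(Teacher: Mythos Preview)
Your final argument is correct and is essentially the paper's proof: take a convex solid neighborhood $V$, apply Theorem \ref{th bdd} to $\K=T[V\cap X_+]\subset L^0(\lambda)_+$ to obtain $\mu\in\Prob_*(\lambda;\K)$, then use solidity of $V$ together with positivity of $T$ to bound $\abs{Tx}$ for $x\in V$. The paper streamlines your last step by writing $\abs{Tx}\le T\abs{x}\in\K$ (since $\abs{x}\in V\cap X_+$ whenever $x\in V$), which is exactly your inequality $\abs{Tx}\le Tx^++Tx^-=T\abs{x}$ without the unnecessary split and the spurious factor of $2$.
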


\begin{proof}
Let $V$ be a convex, solid neighborhood of the origin on which $T$ is bounded, 
$V_+=V\cap X_+$ and let $\K=T[V_+]$. Choose, by Theorem \ref{th bdd},
$\mu\in\Prob_*(\lambda;\K)$. If $x\in V$, then $\abs{T(x)}\le T(\abs x)\in\K$,
as $\abs x\in V_+$ whenever $x\in V$. Thus $T[V]$ is a bounded susbet of 
$L^1(\mu)$.
\end{proof}

This last Corollary applies, e.g., to the space $X=\B(S)$ of bounded functions on 
some set $S$ (endowed with the supremum norm).

\begin{corollary}
\label{cor B}
Any positive linear operator $T:\B(S)\to L^0(\lambda)$ admits $\mu\in\Prob_*(\lambda)$ 
such that $T[\B(S)]\subset L^1(\mu)$ and that $T:\B(S)\to L^1(\mu)$ is continuous.
If $\lambda\in ca(\A)$ then $T:\B(S)\to L^0(\lambda)$ is continuous too.
\end{corollary}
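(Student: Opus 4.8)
The plan is to deduce Corollary \ref{cor B} from Corollary \ref{cor lattice}, the only point requiring work being that a \emph{merely} positive linear operator $T:\B(S)\to L^0(\lambda)$ is automatically continuous. Indeed $\B(S)$ with the supremum norm is a vector lattice under pointwise operations, and its norm balls $\{f\in\B(S):\norm f<\varepsilon\}$ are convex and solid — solidity being immediate since $\abs g\le\abs f$ forces $\norm g\le\norm f$ — so they form a convex, solid topological basis; thus $\B(S)$ meets the hypotheses of Corollary \ref{cor lattice} \emph{once $T$ is known to be continuous}. So the real content is: positivity alone implies continuity.

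For this I would exploit the order unit $\mathbf 1_S$ of $\B(S)$. The closed unit ball of $\B(S)$ is precisely the order interval $[-\mathbf 1_S,\mathbf 1_S]$, and since $T$ is positive it is order preserving, whence
\[
T\bigl[\{f\in\B(S):\norm f\le1\}\bigr]\subset[-T\mathbf 1_S,\,T\mathbf 1_S]=\mathscr U(T\mathbf 1_S),
\]
a bounded subset of $L^0(\lambda)$ by the observation in Lemma \ref{lemma lattice}. Consequently, for $\norm f\le\eta\le1$ one has $\abs{Tf}\le\eta\,T\mathbf 1_S$ pointwise, so, using the metric \eqref{metric},
\[
d(Tf,0)=\inf_{c>0}\bigl\{c+\abs\lambda^*(\abs{Tf}\ge c)\bigr\}\le\inf_{c>0}\bigl\{c+\abs\lambda^*(T\mathbf 1_S\ge c/\eta)\bigr\}\tto0\qquad(\eta\downarrow0),
\]
the last limit holding because $T\mathbf 1_S\in L^0(\lambda)$ is bounded as a singleton (so $\abs\lambda^*(T\mathbf 1_S\ge b)\to0$ as $b\to\infty$). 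Hence $T$ is continuous at the origin, and since the metric is translation invariant and $T$ linear, it is continuous everywhere — crucially, no local convexity is invoked, which is the delicate point since $L^0(\lambda)$ need not even be a topological vector space. I expect this continuity step to be the main obstacle.

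With continuity available, Corollary \ref{cor lattice} yields $\mu\in\Prob_*(\lambda)$ with $T[\B(S)]\subset L^1(\mu)$ and $T:\B(S)\to L^1(\mu)$ continuous. (One may also argue directly, bypassing Corollary \ref{cor lattice}: the convex set $\K=T[\{f:\norm f\le1\}]$ is bounded in $L^0(\lambda)$ and lower bounded by $-T\mathbf 1_S$, so Theorem \ref{th bdd} gives $\mu\in\Prob_*(\lambda;\K)$ with $M:=\sup_{k\in\K}\int\abs kd\mu<\infty$, and then $\int\abs{Tf}d\mu\le M\norm f$ for all $f\in\B(S)$.) For the final assertion, when $\lambda\in ca(\A)$ the membership $\mu\in\Prob_*(\lambda)$ forces $\lambda\ll\mu$, exactly as used in the proofs of Corollary \ref{cor bdd ca} and Theorem \ref{th sep}; therefore $\mu$-convergence entails $\lambda$-convergence, the inclusions $L^1(\mu)\hookrightarrow L^0(\mu)\hookrightarrow L^0(\lambda)$ are continuous, and composing them with $T:\B(S)\to L^1(\mu)$ gives continuity of $T:\B(S)\to L^0(\lambda)$ — a conclusion in fact already furnished by the continuity step above, but which the countably additive hypothesis lets one read off directly from the $L^1(\mu)$ statement.
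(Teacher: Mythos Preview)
Your argument is correct, but it takes a slightly longer path than the paper. The paper bypasses Corollary \ref{cor lattice} entirely and applies Theorem \ref{th bdd} directly: since the closed unit ball $V$ satisfies $T[V]\subset[-T(1),T(1)]$, the image is convex, bounded in $L^0(\lambda)$, and has the $\lambda$-a.s. lower bound $-T(1)$, so Theorem \ref{th bdd} produces $\mu\in\Prob_*(\lambda;T[V])$ at once. This is exactly the parenthetical ``direct'' route you sketch; the paper does not detour through a preliminary continuity step and Corollary \ref{cor lattice}. For the countably additive clause the paper argues that $T[V]$ is bounded in $L^1(\mu)$, hence in $L^0(\mu)$, hence in $L^0(\lambda)$ by the equivalence $\lambda\sim\mu$ --- essentially your second reading.

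What your longer route buys is worth noting: your order-unit argument shows that $T:\B(S)\to L^0(\lambda)$ is continuous \emph{regardless} of whether $\lambda$ is countably additive, a fact the paper's statement asserts only under $\lambda\in ca(\A)$ (and which, as you observe, is already implicit in the inclusion $T[V]\subset[-T(1),T(1)]$). So your approach is more explicit about this point, even if the paper's proof is shorter.
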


\begin{proof}
The unit ball $V$ of $\B(S)$ around the origin is mapped into the set $T[V]\subset[-T(1),T(1)]$ 
which is bounded in $L^0(\lambda)$ and admits $-T(1)$ as a lower bound. By
Theorem \ref{th bdd} there is $\mu\in\Prob_*(\lambda;T[V])$ so that 
$T:\B(S)\to L^1(\mu)$ is continuous. If $\lambda\in ca(\A)$, then $T[V]$, being 
bounded in $L^1(\mu)$, is also bounded in $L^0(\mu)$ and thus in $L^0(\lambda)$ 
as $\mu$ and $\lambda$ are equivalent.
\end{proof}

\begin{example}
\label{ex kernel}
Let $\Sigma$ be an algebra on a given non empty set $S$ and 
$\gamma\in ba(\Sigma)_+$. Consider a map $T:\Omega\times S\to\R_+$ and 
define its $\omega$-section $T_\omega:S\to\R_+$ by letting 
$T_\omega(s)=T(\omega,s)$. Assume that 
(i) $T_\omega\in L^1(\gamma)$ for $\lambda$-almost all $\omega\in\Omega$ 
and (ii) $\int_DT_\omega d\gamma\in L^0(\lambda)$ for each $D\in\Sigma$. 
Then $T$ induces the positive linear operator $\Psi:\B(\Sigma)\to L^0(\lambda)$
defined by letting
\begin{equation}
\Psi(b)=\int bT_\omega d \gamma
\qquad b\in\B(\Sigma)
\end{equation}
By Corollary \ref{cor B}, there exists $\mu\in\Prob_*(\lambda)$ such that
$\Psi:\B(\Sigma)\to L^1(\mu)$ and is continuous in the corresponding topology.
Of course, the map $b\to\int\Psi(b)d\mu$ is then a continuous, positive linear
functional on $\B(\Sigma)$ and admits, by standard results, the representation 
as $\int bd\nu$ with $\nu\in ba(\Sigma)_+$.
\end{example}

Example \ref{ex kernel} easily extends from the random quantities $T_\omega$ 
to the induced vector measure $\int T_\omega d\gamma$.

\begin{theorem}
Let $\Sigma$ be an algebra of subsets of some non empty set $S$ and 
$\Sim(\Sigma,\A)$ the space of $\Sigma$-simple functions with coefficients in 
$\Sim(\A)$ endowed with the norm $\norm f=\sup_{\omega,s}\abs{f(\omega,s)}$. 
Let also $F:\Sigma\to L^0(\lambda)$ be a vector measure. If the expression
\begin{equation}
\label{map}
\int fdF=\sum_{n=1}^Nf_nF(H_n)\qquad 
f=\sum_{n=1}^Nf_n\set{H_n}\in\Sim(\Sigma,\A)
\end{equation}
implicitly defines a continuous linear map of $\Sim(\Sigma,\A)$ into $L^0(\lambda)$
then there exists $\mu\in\Prob_*(\lambda)$ such that the integral $\int fdF$ is
a continuous linear mapping of $\Sim(\Sigma,\A)$ into $L^1(\mu)$.
\end{theorem}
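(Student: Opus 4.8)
The plan is to represent the integral as a continuous operator on a normed lattice and then feed a symmetrised version of its image into Theorem \ref{th bdd}. Endow $X=\Sim(\Sigma,\A)$ with the supremum norm; its unit ball $V$ is the order interval $\{f\in X:-1\le f\le1\}$ (with $1$ the constant function), hence a convex, symmetric, absorbing neighbourhood of the origin. Write $T\colon X\to L^0(\lambda)$ for the operator $Tf=\int f\,dF$; by hypothesis it is linear and continuous, so $T[V]$ is a convex, symmetric subset of $L^0(\lambda)$ which is bounded in $L^0(\lambda)$. Theorem \ref{th bdd} does not apply to $T[V]$ directly, since this set need not admit a $\lambda$-a.s.\ lower bound; instead I would work with $\K:=\co\{\,\abs{Tf}:f\in V\,\}$, which is convex and contained in $L^0(\lambda)_+$, so that $0$ is a $\lambda$-a.s.\ lower bound for it.

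The crux is that $\K$ is bounded in $L^0(\lambda)$. Given $f_1,\dots,f_m\in V$ there is a single finite $\Sigma$-partition $\pi$ of $S$ on which all of them are constant in the $S$-variable (the common refinement of their defining partitions); writing $f_i=\sum_{H\in\pi}c_{i,H}\set H$ with $c_{i,H}\in\Sim(\A)$, $\abs{c_{i,H}}\le1$, one gets $\abs{Tf_i}\le\sum_{H\in\pi}\abs{F(H)}=:V_\pi$, so that every convex combination of the $\abs{Tf_i}$ lies in $[0,V_\pi]$. It therefore suffices to show that $\{V_\pi\}$, indexed by the finite $\Sigma$-partitions of $S$ directed by refinement, is bounded in $L^0(\lambda)$. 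For this I would note that each $V_\pi$ lies in $\cls{T[V]}0\lambda$: since $\Sim(\A)$ is dense in $L^0(\lambda)$, multiplication is continuous there, and $\abs{F(H)}$ is an $L^0(\lambda)$-limit of products $g\,F(H)$ with $g\in\Sim(\A)$, $\abs g\le1$ (a continuous truncation of the sign of $F(H)$ suffices), one can choose $g_H\in\Sim(\A)$ with $\abs{g_H}\le1$ so that $\sum_{H\in\pi}g_H\,F(H)$ is arbitrarily $L^0(\lambda)$-close to $-V_\pi$; as $\sum_{H\in\pi}g_H\set H\in V$, this places $-V_\pi$, hence $V_\pi$, in the $L^0(\lambda)$-closure of $T[V]$. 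Since the $L^0(\lambda)$-closure of an $L^0(\lambda)$-bounded set is again $L^0(\lambda)$-bounded — which uses only the subadditivity of $\abs\lambda^*$ — it follows that $\sup_\pi\abs\lambda^*(V_\pi>c)\to0$ as $c\to\infty$, whence $\K$ is bounded in $L^0(\lambda)$.

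Now Theorem \ref{th bdd} applies to $\K$ and yields $\mu\in\Prob_*(\lambda;\K)$; in particular $M:=\sup_{f\in V}\int\abs{Tf}\,d\mu<\infty$ and $Tf\in L^1(\mu)$ for every $f\in V$. Because $V$ is absorbing, $T[\Sim(\Sigma,\A)]\subseteq L^1(\mu)$, and $T$ carries the neighbourhood $V$ of the origin into the ball of radius $M$ of $L^1(\mu)$; being bounded on a neighbourhood of the origin, $T$ is continuous as a map $\Sim(\Sigma,\A)\to L^1(\mu)$, which is the assertion.

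The step I expect to be the main obstacle is precisely the boundedness of $\{V_\pi\}$ in $L^0(\lambda)$. The naive identity $V_\pi=\bigvee_{\varepsilon}\abs{\sum_{H\in\pi}\varepsilon(H)F(H)}$ represents $V_\pi$ as a lattice supremum of $2^{\card\pi}$ members of $T[V]$ and is useless once $\pi$ is refined, so one must instead realise each $V_\pi$ as a single point of $\cls{T[V]}0\lambda$; carrying out that approximation under mere finite additivity, and pairing it with the stability of $L^0(\lambda)$-boundedness under closure, is where the real work lies.
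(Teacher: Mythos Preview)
Your argument is correct and complete, but the route differs from the paper's. You work with the single set $\K=\co\{\abs{Tf}:f\in V\}$, establish its $L^0(\lambda)$-boundedness by dominating each element by a partition sum $V_\pi=\sum_{H\in\pi}\abs{F(H)}$ and then placing every $V_\pi$ in $\cls{T[V]}0\lambda$ via the sign approximation; one application of Theorem~\ref{th bdd} then finishes. The paper instead makes \emph{two} measure changes. It applies Theorem~\ref{th bdd} to the smaller set $J=\co\{\abs{F(H)}:H\in\Sigma\}$ (bounded in $L^0(\lambda)$ by exactly the same sign approximation, since each $\abs{F(H)}$ lies in $\cls I0\lambda$ with $I=T[V]$), obtaining some $\nu\in\Prob_*(\lambda;J)$. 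It then observes that for $f=\sum_{n=1}^N f_n\set{H_n}\in V$ in canonical (disjoint) form, $N^{-1}\abs{Tf}\le N^{-1}\sum_n\abs{F(H_n)}\in J$, which yields $I\subset L^1(\nu)$; since $I$ is convex, contains $0$, and is bounded in $L^0(\nu)$ (because $\nu\le c\abs\lambda$), Lemma~\ref{lemma bdd} applied with $\nu$ in place of $\lambda$ produces $\mu\in\Prob_*(\nu;I)\subset\Prob_*(\lambda)$. The trade-off is that the paper only has to embed individual terms $\abs{F(H)}$ in the closure of $I$, not the full sums $V_\pi$, but pays for this by a second pass through Lemma~\ref{lemma bdd}; your approach front-loads the work and obtains $\mu$ in one step. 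Both rest on the same two ingredients---approximating $\abs{F(H)}$ by $gF(H)$ with $g\in\Sim(\A)$, $\abs g\le1$, and stability of $L^0$-boundedness under closure---so neither is essentially harder. (A cosmetic point: in your approximation you wrote $-V_\pi$ where $V_\pi$ is meant, but symmetry of $T[V]$ makes this immaterial.)
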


\begin{proof}
$\int fdF:\Sim(\Sigma,\A)\to L^0(\lambda)$ is a continuous linear map if and only if the 
set
\begin{equation*}
I=\left\{\int fdF:f\in\Sim(\Sigma,\A),\ \norm f\le1\right\}
\end{equation*}
is bounded in $L^0(\lambda)$. Observe that $J=\co\{\abs{F(H)}:H\in\Sigma\}\subset I$.
By Theorem \ref{th bdd} there is $\nu\in\Prob_*(\lambda;J)$. We claim that 
$I\subset L^1(\nu)$. In fact, each $f\in\Sim(\Sigma,\A)$ admits the canonical representation 
$\sum_{n=1}^Nf_n\set{H_n}$ where the sets $H_n$ being pairwise disjoint. Thus, 
if $f\in I$ the canonical representation is such that $\sup_n\abs {f_n}\le1$. We
conclude that  
$
N^{-1}\dabs{\int fdF}\
\le
N^{-1}\sum_{n=1}^N\abs{f_n}\abs{F(H_n)}
\le
N^{-1}\sum_{n=1}^N\abs{F(H_n)}
\in J
$.
In addition $I$ is bounded in $L^0(\nu)$ so that, by Lemma \ref{lemma bdd}, 
there is $\mu\in\Prob_*(\nu;I)\subset\Prob_*(\lambda)$, as claimed.
\end{proof}

A classical example of an operator mapping (a subspace of) $\B(S)$ into 
$L^0(\lambda)$ is of course the stochastic integral $\int hdS$ when $S$ is 
a $\lambda$ semimartingale and $\lambda$ a classical probability. The preceding 
Corollaries thus seem to suggest that a meaningful definition of a semimartingale, 
which is beyond the scope of the present paper, may perhaps be obtained 
even when $\lambda$ fails to be countably additive.

\section{$\lambda$-Convergence of Sequences.}
\label{sec convergence}

The same measure change technique exploited above will be applied in this section to
sequences%
\footnote{After this paper was completed I came across the work of Kardaras and 
\v Zitkovi\'c \cite{kardaras zitkovic} that treats some of the topics addressed here 
but only for the countably additive case. 
}. 
In particular we are interested in the possibility of replacing convergence
in measure with $L^1$ convergence. The next Theorem \ref{th memin} establishes 
a finitely additive version of a beautiful result of Memin \cite[Lemma I.4]{memin}, 
widely used in the theory of stochastic integration. Its proof is based on the following 
Lemma, perhaps of its own interest.

\begin{lemma}
\label{lemma memin}
Every sequence $\seqn f$ in $L^0(\lambda)$ that $\lambda$-converges 
to $0$ admits a subsequence $\seqnk f$ such that the following set is bounded 
in $L^0(\lambda)$:
\begin{equation}
\label{K}
\K=\left\{\sum_{k=1}^K\alpha_k2^k\abs{f_{n_k}}:
\alpha_1,\ldots,\alpha_K\ge0,\
\sum_{k=1}^K\alpha_k\le1,\
K\in\N
\right\}
\end{equation}

\end{lemma}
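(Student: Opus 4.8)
The plan is to extract the subsequence greedily, forcing the tails of the $f_{n_k}$ to be so small in $\lambda$-measure that the weighted sums in \eqref{K} cannot escape to infinity. First I would use $\lambda$-convergence to $0$: for each $k$ choose $n_k$ (increasing) so that
\begin{equation*}
\abs\lambda^*\bigl(\abs{f_{n_k}}>2^{-k}\bigr)<2^{-k}.
\end{equation*}
This is possible since $\lim_n\abs\lambda^*(\abs{f_n}>c)=0$ for every $c>0$; apply it with $c=2^{-k}$. The point of the factor $2^{-k}$ inside the probability is that the ``large'' set where $2^k\abs{f_{n_k}}$ exceeds $1$ has $\abs\lambda^*$-mass at most $2^{-k}$, and these masses are summable.

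The key step is then the measure-theoretic estimate for a generic element $g=\sum_{k=1}^K\alpha_k2^k\abs{f_{n_k}}$ of $\K$. Fix a threshold $c>0$ and an integer $M$ with $2^{-M}<c/2$ (to be tuned). On the complement of $\bigcup_{k>M}\{\,2^k\abs{f_{n_k}}>1\,\}$ one has $2^k\abs{f_{n_k}}\le1$ for every $k>M$, hence
\begin{equation*}
\sum_{k>M}\alpha_k2^k\abs{f_{n_k}}\le\sum_{k>M}\alpha_k\le1
\end{equation*}
by the constraint $\sum_k\alpha_k\le1$, while the finitely many remaining terms satisfy
\begin{equation*}
\sum_{k\le M}\alpha_k2^k\abs{f_{n_k}}\le 2^M\sum_{k\le M}\alpha_k2^k\abs{f_{n_k}}/2^M\le 2^M\cdot\bigl(\textstyle\sum_{k\le M}\alpha_k\bigr)\cdot\max_{k\le M}\abs{f_{n_k}}\cdot 2^{M-M},
\end{equation*}
which is crude; cleaner is to bound the head by $2^M\sum_{k\le M}\alpha_k\abs{f_{n_k}}\le 2^M h$ where $h=\sum_{k\le M}\abs{f_{n_k}}\in L^0(\lambda)$ is a \emph{fixed} measurable function not depending on $g$. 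Putting the two pieces together, on the complement of the bad set we get $g\le 2^M h+1$. Therefore, using subadditivity of $\abs\lambda^*$ (as recorded after \eqref{metric}),
\begin{equation*}
\abs\lambda^*(g>c)\le\sum_{k>M}\abs\lambda^*\bigl(2^k\abs{f_{n_k}}>1\bigr)+\abs\lambda^*\bigl(2^Mh>c-1\bigr)\le\sum_{k>M}2^{-k}+\abs\lambda^*\bigl(h>(c-1)2^{-M}\bigr).
\end{equation*}
Given $\varepsilon>0$, first choose $M$ with $\sum_{k>M}2^{-k}<\varepsilon/2$; then, since $h$ is a single element of $L^0(\lambda)$ and hence $\{h\}$ is bounded, choose $c$ large enough that $\abs\lambda^*(h>(c-1)2^{-M})<\varepsilon/2$. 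This bound is uniform over all $g\in\K$, which is exactly the statement that $\K$ is bounded in $L^0(\lambda)$.

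The main obstacle is the head term $\sum_{k\le M}\alpha_k2^k\abs{f_{n_k}}$: the factor $2^k$ is not small there, so one cannot use the defining property of the subsequence for those indices. The resolution above is to absorb the whole head into the single fixed function $h=\sum_{k\le M}\abs{f_{n_k}}$ (times the constant $2^M$), which is legitimate precisely because $M$ is fixed \emph{before} $g$ is chosen and $h$ depends only on the (already extracted) subsequence, not on the coefficients $\alpha_k$; then boundedness of $\K$ follows from boundedness of the singleton $\{h\}$ together with the summability of the tail masses. One should also note $\0\in\K$ (take all $\alpha_k=0$, or $K$ arbitrary), so $\K$ is nonempty and the wording matches later uses; convexity of $\K$, if needed downstream, is immediate from the constraint set on $(\alpha_k)$.
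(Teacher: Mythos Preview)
Your proof is correct and follows essentially the same route as the paper: extract $n_k$ with $\abs\lambda^*(\abs{f_{n_k}}>2^{-k})\le 2^{-k}$, split any element of $\K$ into a head ($k\le M$) and a tail ($k>M$), control the tail on the complement of the summable ``bad set'' $\bigcup_{k>M}\{\abs{f_{n_k}}>2^{-k}\}$, and absorb the head into a fixed measurable function (the paper uses $\sup_{k<k_0}\abs{f_{n_k}}$ where you use $h=\sum_{k\le M}\abs{f_{n_k}}$, an immaterial difference). The paper's subsequence condition is nominally stronger (it carries a $\sup_p$), but that extra strength is not used in the proof of the lemma itself.
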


\begin{proof}
Choose iteratively $n_k>n_{k-1}$ such that
$
\sup_p\abs\lambda^*\left(\abs{f_{n_k+p}}>2^{-k}\right)\le2^{-k}
$
and put $g_k=2^k\abs{f_{n_k}}$. Fix $c>1$ and let $\seq\alpha k$ be a sequence
of positive numbers with finitely many non null terms and $\sum_k\alpha_k\le1$. 
Exploiting the subadditivity of the set function $\abs\lambda^*$ we obtain the 
following inequality:
\begin{align*}
\abs\lambda^*\left(\sum_k\alpha_kg_k>2c\right)
&\le
\abs\lambda^*\left(\sum_{k<k_0}\alpha_kg_k>c\right)
+\abs\lambda^*\left(\sum_{k\ge k_0}\alpha_kg_k>c\right)\\
&\le
\abs\lambda^*\left(\sum_{k<k_0}\alpha_k\abs{f_{n_k}}>2^{-k_0}c\right)
+\sum_{k\ge k_0}\abs\lambda^*\left(\abs{f_{n_k}}>2^{-k}\right)\\
&\le
\abs\lambda^*\left(\sup_{k<k_0}\abs{f_{n_k}}>2^{-k_0}c\right)
+2^{-k_0+1}
\end{align*}
If $k_0$ and $c$ are large enough so that $2^{-k_0+1}<\varepsilon/2$ and
$\abs\lambda^*(\sup_{k<k_0}\abs{f_{n_k}}>2^{-k_0}c)%
<\varepsilon/2$, then, $\abs\lambda^*\left(\sum_k\alpha_kg_k>2c\right)<%
\varepsilon$. 
\end{proof}

We say that a sequence $\seqn f$ is $\lambda$-Cauchy if $f_n\in L^0(\lambda)$ 
for every $n\in\N$ and 
\begin{equation}
\label{cauchy}
\lim_n\sup_{p,q}\abs\lambda^*\left(\dabs{f_{n+p}-f_{n+q}}>c\right)=0
\qquad c>0
\end{equation}

\begin{theorem}
\label{th memin}
For each $i\in\N$, let $\seqn {f^i}$ be $\lambda$-Cauchy and 
$\seqn {h^i}$ $\lambda$-convergent to $0$. Let $\K_1$ be a 
convex, bounded subset of $L^0(\lambda)_+$. There exists 
$\mu\in\Prob_*(\lambda;\K_1)$ and a sequence $\seq nk$ of positive
integers increasing to $\infty$ such that
\begin{equation}
\lim_k\sup_{p,I\in\N}
\int\sum_{i=1}^I\sum_{j=k}^{k+p}
\left[\dabs{f^i_{n_j}-f^i_{n_{j+1}}}+\dabs{h^i_{n_j}}\right]d\mu
=0
\end{equation}
\end{theorem}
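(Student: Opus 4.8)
The plan is to combine Lemma \ref{lemma memin} with Theorem \ref{th bdd} applied to a single carefully constructed convex set that simultaneously encodes $\K_1$ and all the sequences involved. First I would use a diagonal argument: for each pair $(i,j)$ the sequence $\seqn{f^i}$ is $\lambda$-Cauchy and $\seqn{h^i}$ is $\lambda$-convergent to $0$, so by passing to successive subsequences I can extract one common subsequence $\seq nk$ along which, for every $i$, the increments $\dabs{f^i_{n_j}-f^i_{n_{j+1}}}$ and the terms $\dabs{h^i_{n_j}}$ are "small" in the sense of Lemma \ref{lemma memin}. Concretely I would arrange, by the iterative choice in that Lemma applied to the doubly-indexed family (reindexed as a single sequence via a bijection $\N\times\N\to\N$ that respects the weighting), that the set
\begin{equation*}
\K_2=\left\{\sum_{i,j}\alpha_{ij}2^{i+j}\left(\dabs{f^i_{n_j}-f^i_{n_{j+1}}}+\dabs{h^i_{n_j}}\right):\alpha_{ij}\ge0,\ \sum_{i,j}\alpha_{ij}\le1,\ \text{finitely many }\alpha_{ij}\ne0\right\}
\end{equation*}
is bounded in $L^0(\lambda)$. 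The point of the weights $2^{i+j}$ is exactly the same as the $2^k$ in \eqref{K}: after the measure change they will force the tails of the double sum to be integrable and summable.

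Next I would form $\K=\co(\K_1\cup\K_2)$, which is convex, contained in $L^0(\lambda)_+$, bounded in $L^0(\lambda)$ (using the remark in the introduction that the convex hull of the union of two bounded convex sets is bounded), and admits $0$ as a lower bound. By Theorem \ref{th bdd} there is $\mu\in\Prob_*(\lambda;\K)$, and since $\K_1\subset\K$ this same $\mu$ lies in $\Prob_*(\lambda;\K_1)$, giving the first assertion. Moreover $\K_2$ is bounded in $L^1(\mu)$, i.e. $c_2:=\sup_{h\in\K_2}\int h\,d\mu<\infty$. Testing against the single-term functions $2^{i+j}(\dabs{f^i_{n_j}-f^i_{n_{j+1}}}+\dabs{h^i_{n_j}})\in\K_2$ yields
\begin{equation*}
\int\left(\dabs{f^i_{n_j}-f^i_{n_{j+1}}}+\dabs{h^i_{n_j}}\right)d\mu\le c_2\,2^{-(i+j)},
\end{equation*}
so that summing over $i\ge1$ and $j\ge k$ gives $\sup_{p,I}\int\sum_{i=1}^I\sum_{j=k}^{k+p}(\cdots)\,d\mu\le c_2\sum_{i\ge1}\sum_{j\ge k}2^{-(i+j)}=c_2\,2^{-k+1}\to0$ as $k\to\infty$, which is the desired conclusion.

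The main obstacle is the bookkeeping in the diagonal extraction: I must choose one subsequence $\seq nk$ that works for all countably many $i$ simultaneously and respects the geometric weighting, and I must make sure that "increment" quantities $\dabs{f^i_{n_j}-f^i_{n_{j+1}}}$ (which involve consecutive terms of the chosen subsequence, not the original one) still obey the smallness estimate. This is where the $\lambda$-Cauchy hypothesis rather than mere $\lambda$-convergence is essential: \eqref{cauchy} controls $\dabs{f^i_{n+p}-f^i_{n+q}}$ uniformly in $p,q$, so once the indices $n_k$ are pushed far enough out, any pair of consecutive subsequence terms has difference below the prescribed threshold regardless of the gaps. I would set up the extraction so that at stage $k$ the index $n_k$ is chosen large enough that, for all $i\le k$, both $\sup_{p,q\ge n_k}\abs\lambda^*(\dabs{f^i_p-f^i_q}>2^{-(i+k)})\le2^{-(i+k)}$ and $\sup_{p\ge n_k}\abs\lambda^*(\dabs{h^i_p}>2^{-(i+k)})\le2^{-(i+k)}$ hold, which is possible by \eqref{cauchy} and $\lambda$-convergence respectively; then the Lemma \ref{lemma memin}-style tail estimate for $\K_2$ goes through verbatim with the single index $k$ replaced by the combined weight $i+j$. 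Everything else is the routine Tchebycheff-type manipulation \eqref{tcheby} already used repeatedly above.
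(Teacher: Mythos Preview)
Your overall strategy coincides with the paper's: a diagonal extraction, then Lemma~\ref{lemma memin}, then Theorem~\ref{th bdd} applied to $\co(\K_1\cup\K_2)$. The gap is in the definition of $\K_2$. Your diagonal choice imposes, at stage $k$, the smallness conditions \emph{only for $i\le k$}; consequently you obtain $\abs\lambda^*\big(\abs{f^i_{n_j}-f^i_{n_{j+1}}}+\abs{h^i_{n_j}}>2^{-(i+j)}\big)\le2^{-(i+j)}$ \emph{only for pairs with $i\le j$}. For $i>j$ the index $n_j$ was fixed before the $i$-th condition was ever considered, and nothing prevents $2^{i+j}\abs{f^i_{n_j}-f^i_{n_{j+1}}}$ from being unbounded in $L^0(\lambda)$ as $i\to\infty$ with $j$ fixed (take, e.g., $f^i_n=i\,g_n$ for a fixed $\lambda$-Cauchy sequence $\seqn g$ with $g_{n_1}\neq g_{n_2}$). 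Since single terms with $\alpha_{ij}=1$ already lie in your $\K_2$, the set is not bounded in $L^0(\lambda)$ and Theorem~\ref{th bdd} cannot be invoked.

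The paper sidesteps this by \emph{bundling} the $i$-index into the $j$-index: it sets $\hat g_j=\sum_{i\le j}\big[\abs{f^i_{n_j}-f^i_{n_{j+1}}}+\abs{h^i_{n_j}}\big]$, checks that $\seq{\hat g}{j}$ is $\lambda$-convergent to $0$, and applies Lemma~\ref{lemma memin} to this \emph{single} sequence with weights $2^j$. The resulting $\K_2$ is then genuinely bounded, and in the final estimate one only needs the inequality for $i\le k$ (equivalently $i\le j$), which is exactly the regime the diagonal extraction controls. Your argument can be repaired in the same way: restrict $\K_2$ to pairs $(i,j)$ with $i\le j$; then your term-by-term bound $\int(\cdots)d\mu\le c_2\,2^{-(i+j)}$ holds on that range and, since in $\sum_{i=1}^I\sum_{j=k}^{k+p}$ one has $i\le I\le k\le j$ once $k\ge I$, this suffices for the conclusion.
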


\begin{proof}
By a diagonal argument, it is possible to fix $\seq nk$ so that 
\begin{equation}
\label{memin u}
\sup_{p,q}\abs
\lambda^*\left(
\sum_{i\le k}\left[\dabs{f^i_{n_{k+p}}-f^i_{n_k+q}}+\dabs{h^i_{n_{k+p}}}\right]
>2^{-k}
\right)
\le
2^{-k}
\end{equation}
Let 
$
\hat g_k=\sum_{i\le k}[\abs{f^i_{n_k}-f^i_{n_{k+1}}}+\abs{h^i_{n_k}}]
$. 
The sequence $\seq{\hat g}k$ is $\lambda$-convergent to $0$ so that, by Lemma 
\ref{lemma memin} and by letting $g_k=2^k\hat g_k$, the set $\K_2$ of finite sums 
of the form $\sum_{k=1}^K\alpha_kg_k$ with $\alpha_1,\ldots,\alpha_K\ge0$ and 
$\sum_{k=1}^K\alpha_k\le1$ is bounded in $L^0(\lambda)_+$. Given that $\K_1$
and $\K_2$ are bounded and convex, then so is $\K=\co(\K_1\cup\K_2)$, as remarked 
in the introduction. But then, Theorem \ref{th bdd} implies the existence of 
$\mu\in\Prob_*(\lambda,\K)$. Then, from 
\begin{align*}
\sum_{j=k}^{k+p}\hat g_j
=
2^{-(k-1)}\sum_{j=k}^{k+p}2^{-(j-k+1)}g_j
\quad\text{and}\quad
\sum_{j=k}^{k+p}2^{-(j-k+1)}g_j\in\K
\end{align*}
we conclude that
$
\lim_k\sup_p\int\sum_{j=k}^{k+p}\hat g_jd\mu
\le
\lim_k2^{-k}\sup_{h\in\K}\mu(h)
=0
$.
The proof is complete upon noting that  
$
\sum_{j=k}^{k+p}[\abs{f^i_{n_j}-f^i_{n_{j+1}}}+\abs{h^i_{n_j}}]
\le
\sum_{j=k}^{k+p}\hat g_j
$,
for $i=1,\ldots,k$.
\end{proof}

One should note that the sequence $\seq nk$ in the claim does not depend on 
$i\in\N$. Observe also that each sequence $\seqnk{f^i}$ is Cauchy in $L^1(\mu)$ 
and each $\seqnk{h^i}$ is convergent in $L^1(\mu)$. Due to incompleteness 
of $L^p$ spaces under finite additivity, the existence of a sequence which is 
Cauchy in $L^1(\mu)$ may appear an unsatisfactory conclusion. Incompleteness 
is amended, however, if we replace each $f_n\in L^1(\mu)$ with its isomorphic 
image in $ba(\lambda)$, as the sequence $\sseqn{\mu_{f_n}}$ converges in 
norm to some $m\in ba(\mu)\subset ba(\lambda)$ although $m$ may not
be representable as a $\mu$ integral.

In the classical theory of stochastic processes this result has a number of
applications. If, e.g., $(M_t:t\in\R_+)$ is a non negative supermartingale
on some filtration $(\A_t:t\in\R_+)$ of sub $\sigma$ algebras of $\A$,
then, by Doob's convergence Theorem, $M$ converges to a $\lambda$-a.s. 
finite limit $M_\infty$. By Theorem \ref{th memin}, we can replace 
$\lambda$ with an equivalent probability measure $\mu$ such that $M$ 
converges to $M_\infty$ in $L^1(\mu)$ and is therefore uniformly integrable 
with respect to $\mu$. 

This conclusion is based on the strict interplay between convergence in measure
and pointwise convergence which is a distinguishing feature of countable
additivity. Under finite additivity, however, the situation may be more complex.
The following example, making use of the notation employed in the proof of
Theorem \ref{th co}, illustrates some possible pathologies.

\begin{example}
Assume that $\lambda$ is not strongly discontinuous and borrow from the proof 
of Theorem \ref{th co} the definition of $\seq E k$, $\sseq{\pi(k)}k$ and 
$\sseqn {A(n)}$. Let $\seqn g$ be a sequence that $\lambda$-converges to $g$
(and therefore bounded in $L^0(\lambda)$) and let 
$f_n=\abs{\pi(k_n)}\set{A(n)}g_{\abs{\pi(k_n)}}$.
Then, $\seqn f$ is $\lambda$-convergent to $0$ while
\begin{equation}
h_k
=
\sum_{i=J(k-1)+1}^{J(k)}\frac{f_i}{J(k)-J(k-1)}
=
\sum_{i=J(k-1)+1}^{J(k)}\set{A(i)}g_{\abs{\pi(k_i)}}
=
g_{\abs{\pi(k)}}\set{E_k}
\end{equation}
Then $h_k\in\co\{f_k,f_{k+1},\ldots\}$;
moreover, $\seq h k$ is $\lambda$-Cauchy but does not $\lambda$-converge to $0$. 
In fact
if $\lambda$ is strongly continuous -- and so $E_k=\Omega$ -- then $\seq h k$
$\lambda$-converges to $g$. If $\lambda$ has s strongly discontinuous part then
it may well not converge at all. Take the case in which 
$E_k\subset E_{k+1}\uparrow\Omega$ and $g_n=g$. Then $h_k$ converges
pointwise to $g$ but 
$
\abs\lambda^*(\abs{g-h_k}>c)
\ge
\abs\lambda(E_k^c)
\ge
\lambda_d(\Omega)-2^{-k}
$.
\end{example}

In the countably additive setting, Kardaras and \v Zitkovi\' c 
\cite[Example 1.2]{kardaras zitkovic}
construct the example of a sequence converging in measure from which it is
possible to extract via convex combinations further sequences which converge
in measure to any, preassigned measurable function. 

Theorem \ref{th memin} allows to replace measure convergence with $L^1$ 
convergence. We can also obtain conditions under which a $\lambda$-convergent
sequence also converges $\lambda$-a.s..

We start proving the following preliminary result.

\begin{lemma}
\label{lemma liminf}
Let $f,f_n\in L^0(\lambda)$ for $n=1,2,\ldots$ be such that
\begin{equation}
\label{liminf}
\lim_k\abs\lambda^*\left(\inf_{n>k}f_n<f-c\right)=0
\qquad c>0
\end{equation}
Then $\liminf_nf_n\ge f$, $\lambda$-a.s..
\end{lemma}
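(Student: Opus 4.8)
The plan is to deduce the $\lambda$-a.s.\ inequality $\liminf_n f_n\ge f$ from the hypothesis \eqref{liminf} by first reducing to the countable family of sets $\{\inf_{n>k}f_n<f-c\}$ and then extracting the $\lambda$-null exceptional set as a limit. Write, for $k\in\N$ and $c>0$, $B_k^c=\{\inf_{n>k}f_n<f-c\}$. These sets are decreasing in $k$ (a smaller infimum over a larger index range), and \eqref{liminf} says precisely that $\abs\lambda^*(B_k^c)\to0$ as $k\to\infty$ for every fixed $c>0$. The event $\{\liminf_n f_n<f\}$ is the union over $c$ rational positive of $\bigcap_k B_k^c$, so it suffices to show that for each such $c$ the set $B^c:=\bigcap_k B_k^c$ is $\lambda$-null, i.e.\ $\abs\lambda^*(B^c)=0$; a countable union of $\lambda$-null sets is $\lambda$-null by subadditivity of $\abs\lambda^*$, and then one assembles a single $\lambda$-null function witnessing the a.s.\ statement in the sense defined in the introduction.

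First I would note that $B^c\subset B_k^c$ for every $k$, so by monotonicity of the outer set function $\abs\lambda^*(B^c)\le\abs\lambda^*(B_k^c)$ for all $k$; letting $k\to\infty$ and invoking \eqref{liminf} gives $\abs\lambda^*(B^c)=0$. This is the entire substance of the argument — it is genuinely short once the right sets are written down. Then, enumerating the positive rationals as $(c_j)_{j\in\N}$, the set $N=\{\liminf_n f_n<f\}=\bigcup_j B^{c_j}$ satisfies $\abs\lambda^*(N)\le\sum_j\abs\lambda^*(B^{c_j})=0$, so $N$ is $\lambda$-null. To conclude in the precise terminology of the paper, observe that $\liminf_n f_n\ge f$ holds on $\Omega\setminus N$; equivalently, the function $g=\set{N}\cdot(f-\liminf_n f_n)^+$ (or any function supported on $N$ dominating the defect) is $\lambda$-null, and $\liminf_n f_n + g\ge f$ everywhere, which is exactly the meaning of $\liminf_n f_n\ge f$ $\lambda$-a.s.

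The only point requiring any care — and the closest thing to an obstacle — is making sure the outer set function $\abs\lambda^*$ behaves as needed: monotonicity (used for $\abs\lambda^*(B^c)\le\abs\lambda^*(B_k^c)$) and countable subadditivity (used to pass from the $B^{c_j}$ to their union). Monotonicity is immediate from the definition $\abs\lambda^*(B)=\inf\{\abs\lambda(A):A\in\A,\ B\subset A\}$. Countable subadditivity holds because $\abs\lambda^*$ is the restriction of an outer measure generated by the finitely additive $\abs\lambda$ on the algebra $\A$, or alternatively, via the extension picture used in \eqref{tcheby}, since $\abs\lambda^*(B)=\sup\{m(B):m\in ba(\sigma\A)_+\text{ extends }\abs\lambda\}$ and each such $m$ is merely finitely additive but the supremum of countably many null sets under a \emph{fixed} $m$ can be made arbitrarily small along a tail — in fact the cleanest route is just: for each $j$ choose $A_j\in\A$ with $B^{c_j}\subset A_j$ and $\abs\lambda(A_j)<\varepsilon 2^{-j}$, which is possible since $\abs\lambda^*(B^{c_j})=0$, and then $\bigcup_j B^{c_j}$ is $\lambda$-null by the very definition of a $\lambda$-null subset of $\Omega$ (its indicator is a $\lambda$-null function). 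I do not expect any genuine difficulty here; the lemma is a soft consequence of \eqref{liminf} plus the elementary properties of $\abs\lambda^*$.
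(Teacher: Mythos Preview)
Your core step is correct and more direct than the paper's: from $B^c=\bigcap_k B_k^c\subset B_k^c$ you get $\abs\lambda^*(B^c)\le\abs\lambda^*(B_k^c)\to0$ at once, whereas the paper constructs the auxiliary series $g=\sum_n2^n(f-c-f_n)^+$, shows that $g_k$ $\lambda$-converges to $g$, and deduces that $\{\liminf_nf_n<f-c\}\subset\{g=\infty\}$ is null. Both routes land at $\abs\lambda^*(\liminf_nf_n<f-c)=0$ for each fixed $c>0$, and the paper stops there---in its framework that \emph{is} the content of the a.s.\ inequality.

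Your additional step---passing from the individual $B^c$ to $N=\bigcup_j B^{c_j}$ via countable subadditivity of $\abs\lambda^*$---is where the argument breaks. Under finite additivity $\abs\lambda^*$ is \emph{not} countably subadditive, and a countable union of $\lambda$-null sets need not be $\lambda$-null: take $\Omega=\N$, $\A=2^\N$, and $\lambda$ a $\{0,1\}$-valued probability vanishing on finite sets (as $\lambda^\perp$ in Example~\ref{ex supremum}); each singleton is null yet $\N$ is not. Neither of your justifications survives: the set function $\abs\lambda^*(B)=\inf\{\abs\lambda(A):B\subset A\in\A\}$ is not the Carath\'eodory outer measure (which would use countable covers), and in your ``cleanest route'' the union $\bigcup_jA_j$ generally lies outside $\A$ while finite additivity gives no bound on $\abs\lambda^*\bigl(\bigcup_jA_j\bigr)$ in terms of $\sum_j\abs\lambda(A_j)$. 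Fortunately the step is unnecessary. By the paper's definition you need a $\lambda$-null function $h$ with $\liminf_nf_n+h\ge f$; take $h=(f-\liminf_nf_n)^+$ (your own candidate, modulo the redundant factor $\set N$). Then $\{h>c\}=\{\liminf_nf_n<f-c\}\subset B^c$, which you have already shown null for every $c>0$, so $h$ is $\lambda$-null by definition---no union over $c$ is required. The set $N$ itself may well fail to be $\lambda$-null, but the a.s.\ statement does not ask it to be.
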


\begin{proof}
Assume \eqref{liminf}, fix $c>0$ and let $g_k=\sum_{n\le k}2^n(f-c-f_n)^+$ and 
$g=\sum_n2^n(f-c-f_n)^+$. Then, $\{\abs{g-g_k}>c\}\subset\bigcup_{n>k}\{f_n<f-c\}%
=\left\{\inf_{n>k}f_n<f-c\right\}$. By assumption, $\seq g k$ $\lambda$-converges to 
$g\in L^0(\lambda)$ so that $\abs\lambda^*(g=\infty)=0$. Moreover, since
$g_k$ converges to $g$ monotonically too then
$f_n\ge f-c-2^{-n}g$
so that $\{\liminf_nf_n<f-c\}\subset\{g=\infty\}$.
\end{proof}

Lemma \ref{lemma liminf} provides a sufficient criterion for the existence of a
measurable lower bound to a sequence. It also provides a sufficient condition
for $\lambda$-a.s. convergence:

\begin{theorem}
\label{th lim}
Let $\seqn f$ be a sequence in $L^0(\lambda)$ and define 
$g_k=\inf_{n,m>k}(f_n-f_m)$. If $\seq gk$ $\lambda$-converges to $0$, 
then $\liminf_nf_n=\limsup_nf_n$, $\lambda$-a.s..
\end{theorem}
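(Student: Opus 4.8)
The plan is to apply Lemma~\ref{lemma liminf} twice, once to the sequence $\seqn f$ and once to its negatives $\seqn{(-f)}$, using the hypothesis on $\seq gk$ to verify the assumption \eqref{liminf} for a suitable measurable function $f$. First I would observe that the natural candidate for the lower bound in the first application is a measurable limit-point of the sequence; but to avoid circularity I would instead fix any index and work with a fixed $f_m$, or better, exploit the two-sided control provided by $g_k=\inf_{n,m>k}(f_n-f_m)$.

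\begin{proof}
Fix $c>0$. Since $\seq gk$ $\lambda$-converges to $0$, we have
\begin{equation*}
\lim_k\abs\lambda^*\left(g_k<-c\right)=0,
\end{equation*}
and $g_k\le0$ by construction (take $n=m$), so in fact $\lim_k\abs\lambda^*(\abs{g_k}>c)=0$. Fix an arbitrary $m_0\in\N$. For $k\ge m_0$ we have, for every $n>k$,
\begin{equation*}
f_n-f_{m_0}\ge\inf_{n'>k}(f_{n'}-f_{m_0})\ge\inf_{n',m'>m_0}(f_{n'}-f_{m'})=g_{m_0},
\end{equation*}
which is a crude bound; a sharper one is obtained by letting $m_0$ grow. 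Precisely, for each $k$ and each $n>k$ one has $f_n\ge f_{k+1}+g_k$ and also $f_n\le f_{k+1}-g_k$, since both $f_n-f_{k+1}$ and $f_{k+1}-f_n$ dominate $g_k$. Hence
\begin{equation*}
\inf_{n>k}f_n\ge f_{k+1}+g_k
\qquad\text{and}\qquad
\sup_{n>k}f_n\le f_{k+1}-g_k.
\end{equation*}
Now apply Lemma~\ref{lemma liminf} with $f$ replaced by the measurable function $f_{k+1}+g_k$ is not legitimate, since $f$ there must be fixed; instead I argue directly. From $\inf_{n>k}f_n\ge f_{k+1}+g_k$ and $\limsup_kg_k=0$ $\lambda$-a.s.\ (which follows, as above, because $\seq gk$ $\lambda$-converges to $0$ and hence a subsequence converges $\lambda$-a.s., while $g_k\le0$ forces the full sequence to have $\lambda$-a.s.\ limit superior zero once one passes to the canonical representatives), together with the Cauchy-type estimate
\begin{equation*}
\abs\lambda^*\left(\inf_{n>k}f_n<\sup_{n>k}f_n-c\right)
\le\abs\lambda^*\left(-2g_k>c\right)
=\abs\lambda^*\left(\abs{g_k}>c/2\right)\xrightarrow[k\to\infty]{}0,
\end{equation*}
we conclude that $\liminf_nf_n$ and $\limsup_nf_n$ differ by at most $c$, $\lambda$-a.s.. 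Since $c>0$ was arbitrary, $\liminf_nf_n=\limsup_nf_n$ $\lambda$-a.s..
\end{proof}

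The main obstacle I anticipate is the passage from ``$\seq gk$ $\lambda$-converges to $0$'' to the pointwise/$\lambda$-a.s.\ control of the oscillation $\sup_{n>k}f_n-\inf_{n>k}f_n\le-2g_k$: one must be careful that $g_k$ and $\inf_{n>k}f_n$, $\sup_{n>k}f_n$ are genuinely measurable (countable infima of measurable functions need not be measurable under finite additivity in general, so this should be read via $\lambda$-a.s.\ representatives, exactly as Lemma~\ref{lemma liminf} is stated), and that the finitely additive outer measure $\abs\lambda^*$ is only subadditive, so all estimates must be arranged as single-event bounds rather than as countable unions. Once the inequality $\abs\lambda^*(\mathrm{osc}_k>c)\le\abs\lambda^*(\abs{g_k}>c/2)$ is in place, Lemma~\ref{lemma liminf} (applied to $f_n$ with lower bound a representative of $\liminf$, and to $-f_n$ with lower bound a representative of $-\limsup$) closes the argument cleanly; alternatively, as sketched above, one can bypass a second invocation of the Lemma and argue directly from the oscillation estimate.
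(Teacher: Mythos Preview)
Your core idea is right and essentially the paper's, but your write-up contains one genuine error and one unfilled gap that you should fix.

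\textbf{The error.} In the long parenthetical you assert that since $\seq gk$ $\lambda$-converges to $0$, ``a subsequence converges $\lambda$-a.s.''. Under finite additivity this is simply false in general; indeed, the whole point of this section (and the remark immediately following the theorem in the paper) is that the passage from convergence in measure to a.s.\ convergence is \emph{not} available. Fortunately you never actually use this claim, so delete it; but as written it is an incorrect assertion sitting inside the proof.

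\textbf{The gap.} Your displayed ``Cauchy-type estimate'' shows $\abs\lambda^*(\mathrm{osc}_k>c)\to0$ where $\mathrm{osc}_k=\sup_{n>k}f_n-\inf_{n>k}f_n$. You then jump to ``$\liminf_nf_n$ and $\limsup_nf_n$ differ by at most $c$, $\lambda$-a.s.'' without saying why. The missing observation is that $\mathrm{osc}_k$ is \emph{decreasing} in $k$, so $\{\limsup_nf_n-\liminf_nf_n>c\}\subset\{\mathrm{osc}_k>c\}$ for every $k$, whence the outer measure of the left-hand side is $0$. This monotonicity is the only thing that lets you pass from convergence in measure to an a.s.\ statement here, and it deserves to be said. (Also, your detour through $f_{k+1}$ gives the non-sharp bound $\mathrm{osc}_k\le-2g_k$; in fact $\mathrm{osc}_k=-g_k$ exactly, since $g_k=\inf_{n>k}f_n-\sup_{m>k}f_m$.)

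\textbf{Comparison with the paper.} Once cleaned up, your argument is the paper's argument unfolded. The paper simply applies Lemma~\ref{lemma liminf} to the sequence $\seq gj$ with $f=0$: since $g_j$ is increasing, $\inf_{j>k}g_j=g_{k+1}$, so hypothesis \eqref{liminf} becomes $\lim_k\abs\lambda^*(g_{k+1}<-c)=0$, which is exactly the $\lambda$-convergence hypothesis. The lemma then gives $\liminf_jg_j\ge0$ $\lambda$-a.s., and the identification $\liminf_jg_j=\liminf_nf_n-\limsup_mf_m$ finishes. Invoking the lemma packages the finite-additivity bookkeeping (outer measure, non-measurable infima) that you flag in your closing paragraph; your direct oscillation argument is equivalent but forces you to redo that bookkeeping by hand.
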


\begin{proof}
Fix $c>0$. By assumption, $\lim_j\abs\lambda^*(g_j<-c)=0$. Lemma \ref{lemma liminf} 
thus implies that $\liminf_nf_n-\limsup_mf_m=\liminf_jg_j\ge0$, $\lambda$-a.s..
\end{proof}

It is important to remark that, contrary to the classical case, the random quantity 
$g_k$ in the claim is not generally measurable and so neither is the $\lambda$-a.s.
limit of the sequence $\seqn f$. The need to consider convergence properties of
non measurable elements arises also in other parts of probability, see \cite{berti rigo}
for an illustration and references. 

It is also easily seen that in the classical case any $\lambda$-convergent sequence 
admits a subsequence that meets the criterion of Theorem \ref{th lim} which may
thus may be regarded as a partial, finitely additive version of the classical property by 
which each $\lambda$-converging sequence admits a subsequence converging 
$\lambda$-a.s..

To conclude, in the following Theorem \ref{th komlos} we prove a finitely additive 
version of a subsequence principle that is often useful in applications. It is related 
to a well known result of Koml\'{o}s \cite{komlos}. It proves that it is possible, 
given any $\lambda$-bounded sequence, to build a sequence which is $\lambda$%
-Cauchy -- although not necessarily $\lambda$-convergent.

If $\seqn f$ is a sequence, denote by $\Gamma(f_1,f_2,\ldots)$ the family of 
all those sequences $\seqn h$ such that $h_n\in\co\{f_n,f_{n+1},\ldots\}$ for 
all $n\in\N$.
\begin{theorem}
\label{th komlos}
Let $\seqn f$ be a sequence in a convex subset $\K$ of $L^0(\lambda)_+$. 
(i) If $\K$ is bounded in $L^1(\lambda)$ then $\Gamma(f_1,f_2,\ldots)$ 
contains a $\lambda$-Cauchy sequence; (ii) if $\K$
is bounded in $L^0(\lambda)$ then $\Gamma(f_1,f_2,\ldots)$ contains
a sequence which is Cauchy in $L^1(\mu)$ for some $\mu\in\Prob_*(\lambda;\K)$.
\end{theorem}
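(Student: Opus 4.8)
The plan is to reduce part (ii) to part (i) via the measure change of Theorem \ref{th bdd}, and to prove part (i) by a Komlós-type $L^2$-truncation argument adapted to finite additivity. For part (ii), since $\K$ is convex, bounded in $L^0(\lambda)$ and contained in $L^0(\lambda)_+$ (hence admits the $\lambda$-a.s.\ lower bound $0$), Theorem \ref{th bdd} yields $\mu\in\Prob_*(\lambda;\K)$; then $\K$ is bounded in $L^1(\mu)$, so applying part (i) with $\lambda$ replaced by $\mu$ produces a sequence $\seqn h\in\Gamma(f_1,f_2,\ldots)$ that is $\mu$-Cauchy, and a $\mu$-Cauchy sequence in $L^1(\mu)$ is exactly a Cauchy sequence in $L^1(\mu)$ (for a bounded-in-$L^1$ sequence, $\mu$-convergence of differences together with a uniform integrability-type control gives $L^1(\mu)$-Cauchyness; more directly, since the $h_n$ are convex combinations of the $f_n$ they remain in the $L^1(\mu)$-bounded convex set $\K$, and one runs the argument of part (i) so that the constructed sequence is Cauchy in the $L^1(\mu)$ norm itself). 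So the whole content is in part (i).

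For part (i), I would follow the classical route: set $M=\sup_{f\in\K}\int f\,d\abs\lambda<\infty$ and, for a truncation level $T>0$ to be sent to infinity, consider the truncated functions. The obstacle is that under finite additivity there is no $L^2$ Hilbert-space geometry on $L^0(\lambda)$ directly, and no pointwise a.s.\ extraction; so I would instead work with the finitely additive Tchebycheff inequality \eqref{tcheby} and with the observation that a sequence in a ball of $L^1$ whose "high part" is uniformly small is essentially controlled by its bounded truncations. Concretely: first pass to a subsequence (still called $\seqn f$) along which the real numbers $\int (f_n\wedge T)\,d\abs\lambda$ converge for each $T$ in a countable dense set, by a diagonal argument. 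Then, for the truncated sequence $f_n\wedge T$, which is bounded in $L^\infty(\abs\lambda)$ hence in $L^2(\abs\lambda)$, one can choose convex combinations whose $L^2(\abs\lambda)$-norms decrease to the infimum over all tails and all convex combinations; by the parallelogram identity such a minimizing sequence of convex combinations is $L^2(\abs\lambda)$-Cauchy, hence $L^1(\abs\lambda)$-Cauchy, hence $\abs\lambda$-Cauchy by \eqref{tcheby}. A diagonalization over $T\to\infty$ combined with the uniform smallness $\sup_{k\in\K}\abs\lambda(k>T)\le M/T$ of the tails (from \eqref{tcheby}, valid for every convex combination since $\K$ is convex and bounded in $L^1(\lambda)$) then upgrades the truncated-level Cauchyness to Cauchyness of the untruncated convex combinations in $\abs\lambda$-measure. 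Care is needed to keep the successive convex combinations compatible, i.e.\ to ensure the final sequence still lies in $\Gamma(f_1,f_2,\ldots)$; this is handled by always taking combinations of a tail $\{f_n,f_{n+1},\ldots\}$ and diagonalizing.

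The step I expect to be the main obstacle is the passage from $L^2(\abs\lambda)$-Cauchyness of the truncations to $\abs\lambda$-Cauchyness of the original convex combinations, uniformly in the truncation level, because finite additivity blocks the usual appeal to dominated convergence and to a.s.\ subsequences. The resolution should be purely quantitative: write $\abs\lambda^*(|h_n-h_m|>c)\le\abs\lambda^*(|h_n\wedge T-h_m\wedge T|>c/2)+\abs\lambda^*(h_n>T)+\abs\lambda^*(h_m>T)$, bound the last two terms by $2M/T$ via \eqref{tcheby}, choose $T$ large so that $2M/T<\varepsilon/2$, and then use the $L^2(\abs\lambda)$-Cauchyness at level $T$ (through \eqref{tcheby} again) to make the first term $<\varepsilon/2$ for large $n,m$. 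A secondary technical point is justifying the $L^2(\abs\lambda)$-minimization argument in the finitely additive setting: $L^2(\abs\lambda)$ need not be complete, so one cannot assert the minimizing convex combinations converge, but one does not need convergence — the parallelogram law shows directly that a norm-minimizing sequence of convex combinations (of tails) is Cauchy in $L^2(\abs\lambda)$, which is all that is used.
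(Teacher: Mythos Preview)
Your overall architecture for (ii) --- change measure via Theorem \ref{th bdd} and then invoke (i) --- matches the paper's, but both parts contain real gaps.

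\textbf{Part (i).} The $L^2$-parallelogram step produces $L^2(\abs\lambda)$-Cauchy convex combinations of the \emph{truncations} $(f_n\wedge T)$, i.e.\ elements of the form $\sum_i\alpha_i(f_{n_i}\wedge T)$. Your concluding inequality
\[
\abs\lambda^*(\abs{h_n-h_m}>c)
\le
\abs\lambda^*(\abs{h_n\wedge T-h_m\wedge T}>c)+\abs\lambda^*(h_n>T)+\abs\lambda^*(h_m>T)
\]
requires instead that the \emph{truncations of the convex combinations}, $(\sum_i\alpha_if_{n_i})\wedge T$, be Cauchy. Since $(\sum_i\alpha_if_i)\wedge T\ne\sum_i\alpha_i(f_i\wedge T)$ in general, the two objects are different, and no diagonalisation over $T$ repairs this: the weights you pick at level $T$ give you control over the wrong quantity at every level. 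This is precisely the obstacle the paper works around. It passes to the finitely additive measures $\lambda_n=\lambda_{f_n}$; the map $f\mapsto\lambda_f$ is affine, so convex combinations are preserved, and the lattice identity $\lambda_{h\wedge a}=\lambda_h\wedge a\lambda$ (proved in the text) turns ``truncation of a convex combination'' into ``infimum of a convex combination with $a\lambda$'' --- which \emph{is} compatible with the convex structure. An external result (\cite[Theorem 5]{lebesgue}) then supplies $\mu_n\in\co\{\lambda_n,\lambda_{n+1},\ldots\}$ with $\mu_n\wedge a\lambda$ norm-convergent for every $a$, and one reads off that $h_n\wedge a$ is $L^1(\lambda)$-Cauchy for all $a$, after which your final inequality (which the paper also uses) applies legitimately.

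\textbf{Part (ii).} Having obtained from (i), with $\mu$ in place of $\lambda$, a $\mu$-Cauchy sequence $(h_n)\subset\K$, you still need to pass to $L^1(\mu)$-Cauchy. Boundedness in $L^1(\mu)$ alone does not suffice; you would need uniform $\mu$-integrability, which neither $\K$ nor the construction provides, and which is delicate under finite additivity. Your fallback (``run the argument of part (i) so that the constructed sequence is Cauchy in $L^1(\mu)$'') is circular: part (i) only yields Cauchyness in measure. The paper closes this gap by invoking Theorem \ref{th memin} (with $\K_1=\K$), which takes the Cauchy-in-measure sequence and manufactures a \emph{new} $\mu\in\Prob_*(\lambda;\K)$ under which a further subsequence is $L^1(\mu)$-Cauchy; the mechanism is Lemma \ref{lemma memin}, which builds from the Cauchy sequence a convex $L^0$-bounded set large enough to force the required $L^1$ summability after a second measure change.
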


\begin{proof}
With no loss of generality assume $\lambda\ge0$ and let $\K$ be bounded in 
$L^1(\lambda)$. Consider the sequence $\seqn\lambda$ with 
$\lambda_n=\lambda_{f_n}$. By \cite[Theorem 5]{lebesgue} there exists 
$\mu_n\in\co\{\lambda_n,\lambda_{n+1},\ldots\}$ such that the sequence 
$\sseqn{\mu_n\wedge a\lambda}$ is norm convergent for all $a\in\R_+$. Let 
$h_n\in\co\{f_{n},f_{n+1},\ldots\}$ be such that $\mu_n=\lambda_{h_n}$. 
Clearly, $\lambda_{h_n\wedge a}\le\lambda_{h_n}\wedge a\lambda$. In fact 
if $\sseq{h_{n,r}}r$ is a sequence in $\Sim(\A)$ converging to $h_n$ in 
$L^1(\lambda)$, then, using norm convergence,
\begin{align*}
\lambda_{h_n}\wedge a\lambda
=
\lim_r(\lambda_{h_{n,r}}\wedge a\lambda)
=
\lim_r\lambda_{h_{n,r}\wedge a}
=
\lambda_{h_n\wedge a}
\end{align*}
the last line following from the inequality 
$\abs{x_1\wedge a-x_2\wedge a}\le\abs{x_1-x_2}$. Thus the sequence
$\sseqn{h_n\wedge a}$ is Cauchy in $L^1(\lambda)$ for all $a\in\R_+$ 
so that
\begin{align*}
\lambda^*(\abs{h_n\wedge a-h_m\wedge a}>c)&
\ge
\lambda^*(\abs{h_n-h_m}>c;h_m\vee h_n\le a)\\
&\ge
\lambda^*(\abs{h_n-h_m}>c)-\abs\lambda^*(h_m\ge a)-\lambda^*(h_n\ge a)
\end{align*}
and thus $
\lambda^*(\abs{h_n-h_m}>c)
\le
2a^{-1}\sup_{k\in\K}\int kd\lambda+c^{-1}\int\dabs{h_n\wedge a-h_m\wedge a}
$.
We can then choose the sequence $\seq nk$ such that $n_k\ge k$ and that
\begin{equation*}
\sup_{p,q}\lambda^*\left(\abs{h_{n_{k+p}}\wedge a-h_{n_{k+q}}\wedge a}>2^{-k}\right)
\le
2^{-k}
\end{equation*}
The subsequence $\seqnk h$ is thus $\lambda$-Cauchy. If, $\K$ is just bounded in 
$L^0(\lambda)$, then (ii) follows from Theorem \ref{th memin} upon passing to a 
further subsequence, still denoted by $\seqnk h$ for convenience. The proof is 
complete if we let $g_k=h_{n_k}$ upon noting that indeed 
$\seq gk\in\Gamma(f_1,f_2,\ldots)$.
\end{proof}

Claim (\textit{ii}) of Theorem \ref{th komlos} becomes considerably stronger under 
countable additivity, when completeness of $L^p$ spaces may be invoked. The 
sequence $\seqn g$ would then converge in $L^1(\mu)$ and, upon passing to 
a subsequence if necessary, a.s. too. The statement asserting that,  by taking 
convex combinations, it is possible to extract from a sequence of positive, measurable 
functions another sequence that converges a.s., is often referred to as Koml\'os 
lemma (see  \cite[Theorem 1]{komlos}) and has become widely used in the literature. 
The interplay between convergence in measure and a.s. convergence is crucial to 
this end and requires countable additivity. When $\lambda$ is just finitely additive, 
Theorem \ref{th komlos} may be useful to obtain from a sequence converging a.s. 
a further sequence that converges a.s. and is Cauchy in measure. 

As a final application of Theorem \ref{th komlos} we obtain the following:
\begin{corollary}
Let $\varphi:L^1(\lambda)\to\R$ be uniformly continuous and 
$\K$ a convex, uniformly integrable subset of $L^1(\lambda)_+$. For each 
sequence $\seqn f$ in $\K$ there exists a sequence $\seqn h$ in 
$\Gamma(f_1,f_2,\ldots)$ such that $\varphi(bh_n)$ converges for every 
$b\in\B(\A)$.
\end{corollary}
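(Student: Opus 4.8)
The plan is to combine Theorem \ref{th komlos}(i) with a finitely additive analogue of Vitali's convergence theorem. First I would observe that a uniformly integrable subset of $L^1(\lambda)$ is automatically bounded in $L^1(\lambda)$ (from $\int\abs gd\abs\lambda\le c\norm\lambda+\sup_{g\in\K}\int(\abs g-c)^+d\abs\lambda$, finite for $c$ large since $\norm\lambda<\infty$), so that $\K$ is a convex subset of $L^0(\lambda)_+$ bounded in $L^1(\lambda)$; Theorem \ref{th komlos}(i) then yields a sequence $\seqn h\in\Gamma(f_1,f_2,\ldots)$ which is $\lambda$-Cauchy. Since $\K$ is convex, each $h_n$ lies in $\co\{f_n,f_{n+1},\ldots\}\subset\K$, and therefore $\{h_n:n\in\N\}$ inherits the uniform integrability of $\K$.

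The main step is to upgrade the $\lambda$-Cauchy property of $\seqn h$ to the statement that $\seqn h$ is Cauchy in $L^1(\lambda)$. Fix $\varepsilon>0$. Uniform integrability provides $c''>0$ with $\sup_n\int(\abs{h_n}-c'')^+d\abs\lambda<\varepsilon$; put $c'=\varepsilon/\norm\lambda$ and, using the $\lambda$-Cauchy condition at level $c'$, choose $N$ so that $\abs\lambda^*(\abs{h_{n+p}-h_{n+q}}>c')<\varepsilon/c''$ whenever $n\ge N$, for all $p,q$. Fixing such $n,p,q$, pick $B\in\A$ with $\{\abs{h_{n+p}-h_{n+q}}>c'\}\subset B$ and $\abs\lambda(B)<\varepsilon/c''$ — possible because $\abs\lambda^*$ is the infimum of $\abs\lambda$ over $\A$-supersets — and split
\[
\int\abs{h_{n+p}-h_{n+q}}d\abs\lambda
=\int\big(\abs{h_{n+p}-h_{n+q}}\wedge c'\big)d\abs\lambda
+\int\big(\abs{h_{n+p}-h_{n+q}}-c'\big)^+d\abs\lambda .
\]
The first term is at most $c'\norm\lambda=\varepsilon$; the integrand of the second is supported in $B$ and dominated by $(\abs{h_{n+p}}+\abs{h_{n+q}})\set B$, and since $\int\set B\abs{h_k}d\abs\lambda\le c''\abs\lambda(B)+\int(\abs{h_k}-c'')^+d\abs\lambda<2\varepsilon$ for every index $k$, the second term is at most $4\varepsilon$. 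Hence $\sup_{p,q}\int\abs{h_{n+p}-h_{n+q}}d\abs\lambda<5\varepsilon$ for $n\ge N$, i.e. $\seqn h$ is Cauchy in $L^1(\lambda)$.

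To conclude, for $b\in\B(\A)$ one has $bh_n\in L^1(\lambda)$ and $\int\abs{bh_n-bh_m}d\abs\lambda\le\norm{b}_\infty\int\abs{h_n-h_m}d\abs\lambda$, so $\seqn{bh}$ is again Cauchy in $L^1(\lambda)$; since $\varphi$ is uniformly continuous, $\sseqn{\varphi(bh_n)}$ is a Cauchy sequence of real numbers and therefore converges. The \emph{main obstacle} is precisely the Vitali step of the second paragraph: one cannot integrate directly over the (possibly non-measurable) superlevel set $\{\abs{h_{n+p}-h_{n+q}}>c'\}$, so the estimate must be routed through an $\A$-measurable superset $B$ as above; once this device is in place, nothing beyond Theorem \ref{th komlos}(i) and routine estimates is needed.
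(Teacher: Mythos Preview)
Your proof is correct. Both your argument and the paper's rest on the same two ingredients --- the $\lambda$-Cauchy sequence supplied by Theorem \ref{th komlos}(i) and the uniform integrability of $\K$ --- but you package them differently. You run a Vitali-type estimate (routing through an $\A$-measurable cover $B$ of the superlevel set) to prove directly that $\seqn h$ is Cauchy in $L^1(\lambda)$, and then invoke uniform continuity of $\varphi$ once. The paper never establishes $L^1(\lambda)$-Cauchyness of $\seqn h$ explicitly; instead it truncates, uses uniform integrability to get $\norm{bh_n-b(h_n\wedge a)}_{L^1(\lambda)}\to0$ uniformly in $n$, observes that for fixed $a$ the truncations $b(h_n\wedge a)$ are $L^1(\lambda)$-Cauchy (since $\abs{h_n-h_m}\wedge a\to0$ in $L^1(\lambda)$ follows already from the $\lambda$-Cauchy property), and then interchanges $\limsup_{n,m}$ with $\lim_a$ via \cite[I.7.6]{bible}. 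Your route is slightly more direct and yields the stronger intermediate statement that $\seqn h$ is $L^1(\lambda)$-Cauchy; the paper's route sidesteps the measurable-cover device by keeping everything bounded through truncation.
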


\begin{proof}
Let $\seqn h$ be the $\lambda$-Cauchy sequence of Theorem \ref{th komlos}. By 
uniform integrability, $\lim_{a\to\infty}\sup_{f\in\K}
\norm{f-(f\wedge a)}_{L^1(\lambda)}=0$; 
by continuity, the limit $\lim_{a\to\infty}\varphi(f\wedge a)$ exists uniformly in $f\in\K$. 
Thus, for each $b\in\B(\A)$ we obtain
\begin{align*}
\limsup_{n,m}\dabs{\varphi(bh_n)-\varphi(bh_m)}
&=
\limsup_ {n,m}\lim_{a\to\infty}\dabs{\varphi(b(h_n\wedge a))-\varphi(b(h_m\wedge a))}\\
&=
\lim_{a\to\infty}\limsup_{n,m}\dabs{\varphi(b(h_n\wedge a))-\varphi(b(h_m\wedge a))}\\
&=0
\end{align*}
where we exploited \cite[I.7.6]{bible}, the inequality
$\dabs{(h_{n+p}\wedge a)-(h_{n+q}\wedge a)}\le\dabs{h_{n+p}-h_{n+q}}\wedge a$ 
and the fact that as $\abs{h_{n+p}-h_{n+q}}\wedge a$ tends to $0$ in $L^1(\mu)$ as 
$n$ approaches $\infty$.
\end{proof}

\acknowledgement{I am deeply grateful to an anonymous referee for a number of 
helpful suggestions and for pointing out several mistakes in a previous draft.}

\end{document}